\def\al{\alpha}
\def\be{\beta}
\def\ga{\gamma}
\def\Ga{\Gamma}
\def\De{\Delta}
\def\ep{\varepsilon}
\def\e{\varepsilon}
\def\th{\theta}
\def\la{\lambda}
\def\La{\Lambda}
\def\si{\sigma}
\def\vph{\varphi}
\def\om{\omega}
\def\Om{\Omega}
\def\cA{{\mathcal A}}
\def\cE{{\mathcal E}}
\def\cK{{\mathcal K}}
\def\cM{{\mathcal M}}
\def\cN{{\mathcal N}}
\def\cO{{\mathcal O}}
\def\cS{{\mathcal S}}
\def\cU{{\mathcal U}}
\def\cX{{\mathcal X}}
\def\sC{{\mathscr C}}
\def\sE{{\mathscr E}}
\def\sT{{\mathscr T}}
\def\ang#1{{\langle #1 \rangle}}
\def\P{{\mathbb P}}
\def\Z{{\mathbb Z}}
\def\N{{\mathbb N}}
\def\F{{\mathbb F}}
\def\M{{\mathbb M}}
\def\Aut{\operatorname{Aut}}
\def\fchar{\operatorname{char}}
\def\Coker{\operatorname{Coker}}
\def\depth{\operatorname{depth}}
\def\dim{\operatorname{dim}}
\def\End{\operatorname{End}}
\def\Ext{\operatorname{Ext}}
\def\en{\operatorname{e}}
\def\gldim{\operatorname{gldim}}
\def\Hom{\operatorname{Hom}}
\def\HH{\operatorname{HH}}
\def\Im{\operatorname{Im}}
\def\injdim{\operatorname{injdim}}
\def\Ker{\operatorname{Ker}}
\def\mnull{{\operatorname{null}}}
\def\op{\operatorname{o}}
\def\projdim{\operatorname{projdim}}
\def\Proj{\operatorname{Proj}}
\def\rank{\operatorname{rank}}
\def\sup{\operatorname{sup}}
\def\RHom{\operatorname{RHom}}
\newcommand{\Lotimes}{\otimes^{\operatorname L}}
\def\grmod{\operatorname{\mathsf{grmod}}}
\def\GrMod{\operatorname{\mathsf{GrMod}}}
\def\perf{\operatorname{\mathsf{perf}}}
\def\CM{\operatorname{\mathsf{CM}}^{\Z}}
\def\qgr{\operatorname{\mathsf{qgr}}}
\def\coh{\operatorname{\mathsf{coh}}}
\def\uCM{\operatorname{\underline{\mathsf{CM}}}^{\Z}}
\def\fdim{\operatorname{\mathsf{fdim}}}
\def\D{\mathsf{D}}
\def\Db{\mathsf{D^b}}
\def\mod{\operatorname{\mathsf{mod}}}
\def\<{\langle}
\def\>{\rangle}
\def\rnum#1{\expandafter{\romannumeral #1}}
\def\Rnum#1{\uppercase\expandafter{\romannumeral #1}}
\theoremstyle{plain} 
\newtheorem{thm}{Theorem}[section]
\newtheorem{cor}[thm]{Corollary}
\newtheorem*{thm*}{Theorem}
\newtheorem*{cor*}{Corollary}
\newtheorem{lem}[thm]{Lemma}
\newtheorem{prop}[thm]{Proposition}
\theoremstyle{definition}
\newtheorem{dfn}[thm]{Definition}
\newtheorem{ex}[thm]{Example}
\newtheorem{ntn}[thm]{Notation}
\newtheorem{rem}[thm]{Remark}
\newcommand{\thmref}[1]{Theorem~\ref{#1}}
\newcommand{\lemref}[1]{Lemma~\ref{#1}}
\newcommand{\propref}[1]{Proposition~\ref{#1}}
\newcommand{\exref}[1]{Example~\ref{#1}}
\newcommand{\dfnref}[1]{Definition~\ref{#1}}
\numberwithin{equation}{section}
\begin{document}

\title{Derived categories of skew quadric hypersurfaces}
\author{Kenta Ueyama}
\address{
Department of Mathematics, 
Faculty of Education,
Hirosaki University, 
1 Bunkyocho, Hirosaki, Aomori 036-8560, Japan}
\email{k-ueyama@hirosaki-u.ac.jp} 
\thanks{The author was supported by JSPS Grant-in-Aid for Early-Career Scientists 18K13381.}
\subjclass[2020]{14A22, 16S38, 18G80, 16E35}

\keywords{exceptional sequence, tilting theory, skew quadric hypersurface, noncommutative projective scheme, derived category}

\begin{abstract}
The existence of a full strong exceptional sequence in the derived category of a smooth quadric hypersurface was proved by Kapranov.
In this paper, we present a skew generalization of this result.
Namely, we show that if $S$ is a standard graded $(\pm 1)$-skew polynomial algebra in $n$ variables with $n \geq 3$ and
$f = x_1^2+\cdots +x_n^2 \in S$, then the derived category $\operatorname{\mathsf{D^b}}(\operatorname{\mathsf{qgr}} S/(f))$
of the noncommutative scheme $\operatorname{\mathsf{qgr}} S/(f)$
has a full strong exceptional sequence.
The length of this sequence is given by $n-2+2^r$ where $r$ is the nullity of a certain matrix over $\mathbb F_2$.
As an application, by studying the endomorphism algebra of this sequence,
we obtain the classification of $\operatorname{\mathsf{D^b}}(\operatorname{\mathsf{qgr}} S/(f))$ for $n=3, 4$.
\end{abstract}

\maketitle

\tableofcontents

\section{Introduction}
Throughout this paper, $k$ denotes an algebraically closed field of $\fchar k \neq 2$.

The theory of exceptional sequences plays an important role in the study of triangulated categories.
It is very useful to describe the structure of a triangulated category.
For example, it is well-known that there is a full strong exceptional sequence 
\[ \Db(\coh \P^{n-1}) = \<\cO_{\P^{n-1}}(-n+1),\dots, \cO_{\P^{n-1}}(-1),\cO_{\P^{n-1}} \>.\]
Note that a noncommutative generalization (more precisely, an AS-regular version) of this result is also known
(see \cite[Propositions 4.3 and 4.4]{MM}).

As another interesting case, M. M. Kapranov showed the existence of a full strong exceptional sequence in the derived category of a smooth quadric hypersurface.

\begin{thm}[{\cite{K1}, \cite[Section 4]{K2}}] \label{thm.K}
Let $X \subset \P^{n-1}$ be a smooth quadric hypersurface with $n\geq 3$. Then $\Db(\coh X)$ has a full strong exceptional sequence of the form
\[
\Db(\coh X)=
\begin{cases}
\< \cO_X(-n+3),\dots, \cO_X(-1), \cO_X, \cS \> & \text{if $n$ is odd,} \\
\< \cO_X(-n+3),\dots, \cO_X(-1), \cO_X, \cS_{+}, \cS_{-}  \> & \text{if $n$ is even}
\end{cases}
\]
where $\cS$ and $\cS_{\pm}$ are Arithmetically Cohen-Macaulay bundles, called the Spinor bundles.
\end{thm}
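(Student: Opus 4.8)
The plan is to work algebraically through the homogeneous coordinate ring rather than geometrically. Write $A = k[x_1,\dots,x_n]/(q)$, where $q$ is the nondegenerate quadratic form cutting out $X$; since $A$ is commutative and generated in degree one, Serre's theorem gives $\Db(\coh X) \simeq \Db(\qgr A)$, so it suffices to produce the sequence in $\Db(\qgr A)$. The ring $A$ is AS-Gorenstein of dimension $n-1$ with $\omega_A \cong A(-(n-2))$, i.e.\ Gorenstein parameter $a = n-2 > 0$, and because $X$ is smooth, $A$ has an isolated singularity at the irrelevant ideal. These are precisely the hypotheses of Orlov's theorem, which I would invoke to obtain a semiorthogonal decomposition
\[
\Db(\qgr A) = \langle \pi A(-n+3), \dots, \pi A(-1), \pi A, \; \cT \rangle,
\]
where the first $n-2$ factors are the line bundles $\cO_X(-n+3),\dots,\cO_X$ and $\cT$ is the image of a fully faithful embedding of the graded singularity category $\uCM(A)$.

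Next I would analyze the two blocks separately. For the line bundles, the conormal sequence $0 \to \cO_{\P^{n-1}}(-2) \xrightarrow{q} \cO_{\P^{n-1}} \to \cO_X \to 0$ together with the ACM property of the quadric gives $H^p(X,\cO_X(t)) = 0$ for $0 < p < \dim X$, while Serre duality with $\omega_X = \cO_X(-(n-2))$ pins down the top cohomology. For twists $-n+3 \le i, j \le 0$ the difference $j-i$ lies strictly above $-(n-2)$, so $\Ext^{>0}(\cO_X(i),\cO_X(j)) = 0$, while $\Hom(\cO_X(i),\cO_X(j)) \ne 0$ exactly when $j \ge i$; this shows the line-bundle block is strong exceptional and correctly ordered.

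The block $\cT$ I would identify via matrix factorizations. By the Buchweitz--Eisenbud--Herzog correspondence, $\uCM(A)$ is equivalent to the homotopy category of graded matrix factorizations of $q$, which in turn is governed by the Clifford algebra $Cl(q)$; over the algebraically closed field $k$ the relevant (even) Clifford algebra is a product of matrix algebras --- one factor when $n$ is odd and two when $n$ is even --- hence semisimple. Therefore $\cT$ is split-generated by the one or two simple modules, whose images under Orlov's embedding are precisely the spinor bundles $\cS$ (resp.\ $\cS_\pm$); Schur's lemma over the matrix factors makes each of them exceptional, and this simultaneously yields fullness, since the line bundles and $\cT$ together exhaust $\Db(\qgr A)$.

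What remains, and where I expect the real labor, is verifying strongness across the two blocks: the vanishing of $\Ext^{>0}(\cO_X(i),\cS)$ and $\Ext^{>0}(\cS,\cO_X(i))$, and of the higher self- and mutual Exts of the spinor bundles. I would compute these from the explicit $2$-periodic resolution $\cdots \to \cO_X(-1)^N \xrightarrow{\varphi} \cO_X^N \to \cS \to 0$ produced by the matrix factorization, feeding in the line-bundle cohomology from the previous step and invoking the ACM property of the spinor bundles to kill intermediate cohomology. The delicate points are the grading and twist bookkeeping in Orlov's embedding --- to ensure the spinor bundles land adjacent to $\cO_X$ rather than at some unwanted shift --- and the top-degree Ext terms controlled by Serre duality; forcing every boundary case of $t = j-i$ onto the correct side of the vanishing range is the crux of the argument.
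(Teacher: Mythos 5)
Your proposal is correct and is essentially the paper's own route: the paper obtains Theorem~\ref{thm.K} as the commutative special case ($\e_{ij}=1$, nullity $0$ or $1$) of Theorem~\ref{thm.main}, whose proof is precisely your outline --- Orlov's semiorthogonal decomposition (Theorem~\ref{thm.O}), identification of the singularity-category block through the semisimplicity of the (even) Clifford algebra, appearing here as $C(A_\e)=A_\e^![w^{-1}]_0\cong M_{\be}(k)^{\al}$ (Lemmas~\ref{lem.c} and~\ref{lem.Amx}), and the strongness verification via the $2$-periodic free resolutions, the Cohen--Macaulay property, and Serre duality (Lemmas~\ref{lem.HE}, \ref{lem.hil}, \ref{lem.extx}, \ref{lem.homx}, assembled in Lemmas~\ref{lem.2a} and~\ref{lem.2b}). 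The labor you flag as remaining (cross-block $\Ext$-vanishing and the twist bookkeeping in Orlov's embedding) is exactly what those lemmas carry out, with Amiot's lemma (Lemma~\ref{lem.Ami}) handling the normalization that places the spinor objects adjacent to $\cA_\e$ without unwanted shifts.
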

In this paper, we give a skew generalization of this theorem.

\begin{ntn}\label{nota}
For a symmetric matrix $\e:= (\e_{ij}) \in M_n(k)$ such that $\e_{ii}=1$ and $\e_{ij}=\e_{ji} \in \{1, -1\}$ for $i\neq j$, we fix the following notations: 
\begin{enumerate}
\item $S_{\e}$ is the graded skew polynomial algebra $k\langle x_1, \dots, x_n\rangle /(x_ix_j-\e_{ij}x_jx_i)$ where all variables $x_i$ are of degree $1$ (it is called a \emph{standard graded ($\pm 1$)-skew polynomial algebra});
\item $f_{\e}$ is the central element $x_1^2+\cdots +x_n^2\in S_{\e}$;
\item $A_{\e}:=S_{\e}/(f_{\e})$;
\item $G_{\e}$ is the graph with vertex set
$V(G_{\e})=\{1, \dots , n\}$ and edge set $E(G_{\e})=\{ij \mid \e_{ij}=\e_{ji}=1, i \neq j \}$;
\item $\De_{\e}$ is the matrix
\[
\begin{pmatrix}  
 & & &1 \\
 &M(G_\e) & &\vdots \\
 & & &1 \\
1 &\cdots &1 &0
\end{pmatrix}
\in M_{n+1}(\F_2)
\]
where $\F_2$ is the field with two elements $0$ and $1$, and $M(G_\e)$ is the adjacency matrix of $G_\e$ over $\F_2$;
\item $\mnull_{\F_2} \De_{\e}$ is the nullity of the matrix of $\De_{\e}$ over $\F_2$.
\end{enumerate} 
\end{ntn}

In \cite{HU}, Higashitani and the author proved the following structure theorem
for the stable category $\uCM(A_\e)$ of graded maximal Cohen-Macaulay modules over $A_\e$ by combinatorial methods. 

\begin{thm}[{\cite[Theorem 1.3]{HU}}] \label{thm.HU}
We use Notation \ref{nota}. Let $r=\mnull_{\F_2}\De_{\e}$ and $\al=2^r$.
\begin{enumerate}
\item There exists an equivalence of triangulated categories $\uCM(A_\e) \cong \Db(\mod k^\al)$.
\item $A_\e$ has $\al$ indecomposable non-projective graded maximal Cohen-Macaulay modules up to isomorphism and degree shift.
\end{enumerate}
\end{thm}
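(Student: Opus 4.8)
The plan is to move from maximal Cohen--Macaulay modules to matrix factorizations, then to the representation theory of a finite-dimensional skew Clifford algebra, thereby reducing the whole classification to $\F_2$-linear algebra governed by the graph $G_\e$. The output will be an equivalence $\uCM(A_\e) \simeq \Db(\mod k^{\al})$, from which the count in (2) follows formally.

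First I would invoke the noncommutative analogue of the Buchweitz--Eisenbud correspondence. Since $S_\e$ is a noetherian Koszul AS-regular algebra and $f_\e$ is a central regular element of degree $2$, the quotient $A_\e = S_\e/(f_\e)$ is AS-Gorenstein and there is an equivalence of triangulated categories $\uCM(A_\e) \simeq \uNMF(S_\e, f_\e)$ with the homotopy category of graded noncommutative matrix factorizations of $f_\e$, i.e.\ pairs with $\ph\ps = \ps\ph = f_\e\cdot\id$. Second, I would identify these factorizations with modules over a skew Clifford algebra. Using $f_\e = x_1^2+\cdots+x_n^2$ and the relations $x_ix_j=\e_{ij}x_jx_i$, a direct computation gives, for constant matrices $e_i$,
\[
\Bigl(\sum_i x_i e_i\Bigr)^2=\sum_i x_i^2 e_i^2+\sum_{i<j}x_ix_j\bigl(e_ie_j+\e_{ij}e_je_i\bigr),
\]
so $\sum_i x_i e_i$ squares to $f_\e\cdot\id$ exactly when $e_i^2=1$ and $e_ie_j+\e_{ij}e_je_i=0$ for $i\neq j$. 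Hence the matrix factorizations of $f_\e$ are controlled by the finite-dimensional algebra
\[
C_\e:=k\langle e_1,\dots,e_n\rangle/(e_i^2-1,\ e_ie_j+\e_{ij}e_je_i\ (i\neq j)),
\]
and I would argue, via the $\Z$-grading (equivalently through the Clifford-deformation description of the Koszul dual $A_\e^{!}$), that every object of $\uNMF(S_\e,f_\e)$ arises this way, giving $\uNMF(S_\e,f_\e)\simeq\Db(\mod C_\e)$ up to the even/graded part dictated by the degree-$2$ grading.

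The structural computation is the step I expect to be the main obstacle. Because $\fchar k\neq 2$, the algebra $C_\e$ is semisimple, and as a twisted group algebra of $(\Z/2)^n$ it is a product $\prod_i \operatorname{Mat}_{d_i}(k)$; the task is to count the simple factors. The commutation/anticommutation pattern of the $e_i$, together with the extra bookkeeping generator coming from the degree-$2$ homogenization, defines an alternating bilinear form over $\F_2$ whose Gram matrix is exactly $\De_\e$ (the adjacency matrix $M(G_\e)$ records which $e_i$ anticommute, and the bordering row/column encodes the grading element). The radical of this form computes the center of $C_\e$ modulo its matrix part, so the number of simple $C_\e$-modules equals $2^{\mnull_{\F_2}\De_\e}=\al$. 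Carrying out this identification carefully---matching the extra row and column of $\De_\e$ to the grading element and verifying $\dim_k Z(C_\e)=2^{r}$---is the delicate, combinatorial heart of the argument and is where $G_\e$ genuinely enters.

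Finally I would assemble the pieces. By Morita invariance $\Db(\mod\prod_{i=1}^{\al}\operatorname{Mat}_{d_i}(k))\simeq\Db(\mod k^{\al})$, which together with the two equivalences above yields $\uCM(A_\e)\simeq\Db(\mod k^{\al})$, proving (1). For (2), the indecomposable objects of $\Db(\mod k^{\al})$ up to shift are precisely the $\al$ simple modules, one supported at each factor; transporting this statement back through the equivalence shows that $A_\e$ has exactly $\al=2^{r}$ indecomposable non-projective graded maximal Cohen--Macaulay modules up to isomorphism and degree shift.
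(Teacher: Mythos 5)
Your skeleton is the same one the paper relies on: the theorem is quoted from \cite[Theorem 1.3]{HU}, and Section 3.1 of the paper re-derives its content by passing from $\uCM(A_\e)$ to a finite-dimensional Clifford-type algebra and counting its simple factors. The genuine difference is in how the counting is done. The paper works with the Smith--Van den Bergh algebra $C(A_\e)=A_\e^![w^{-1}]_0$ and the duality $\mathfrak{G}$ of \cite[Theorem 3.2]{SV}, and computes $C(A_\e)\cong M_\be(k)^{\al}$ by transforming $G_\e$ into a normal form through mutations, relative mutations and two-points reductions (Lemmas \ref{lem.mrm}--\ref{lem.c}); you instead propose to count the simple factors by pure $\F_2$-linear algebra: the algebra is a twisted group algebra of an elementary abelian $2$-group, conjugation acts diagonally on the monomial basis, so the center is spanned by the monomials whose index vector lies in the kernel of the anticommutation Gram matrix, and semisimplicity (Maschke, $\fchar k\neq 2$, $k$ algebraically closed) gives that the number of simple factors is $2$ to the power of the nullity of that Gram matrix. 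This is a clean, case-free alternative to the graph moves; what the paper's reduction buys in addition is the block size $\be=2^{(n-r-1)/2}$ and an explicit normal form, both reused later (Lemma \ref{lem.x}(3), Theorem \ref{thm.quiv}).

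Two steps need repair, however. (i) As written, your count is applied to the wrong algebra. For $C_\e=k\langle e_1,\dots,e_n\rangle/(e_i^2-1,\ e_ie_j+\e_{ij}e_je_i)$ the Gram matrix is $M(G_\e)$, so its number of simples is $2^{\mnull_{\F_2}M(G_\e)}$, which is generally not $\al$: in the commutative case $n=3$ one gets $C_\e\cong M_2(k)\times M_2(k)$, i.e.\ two simples, while $\al=1$, so statement (1) would fail. The matrix $\De_\e$ only appears after adjoining the grading involution $\delta$ (with $\delta^2=1$ and $\delta e_i=-e_i\delta$, which produces the bordering row and column of ones), i.e.\ after passing to $\Z/2$-graded $C_\e$-modules, equivalently to the degree-zero algebra $C(A_\e)$. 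The assertion that the \emph{graded} category $\uCM(A_\e)$ is governed by this extended/even algebra rather than by $C_\e$ is precisely the nontrivial input you defer with the phrase ``up to the even/graded part''; it does not follow from the identity $(\sum_i x_ie_i)^2=f_\e\cdot\id$, and must be supplied by \cite[Theorem 3.2]{SV} or by the linear-resolution results of \cite{MUm} used in Lemma \ref{lem.x}. (ii) In (2), ``transporting up to shift'' is not formal, because the internal shift $(1)$ is not the translation $[1]$ of $\uCM(A_\e)$: one only has $(-2)\cong[-2]$, coming from $\Om^2X\cong X(-2)$ (centrality of $f_\e$), and for instance in the commutative case $n=4$ one has $\Om X_1\cong X_2(-1)$, so $(1)\neq[1]$. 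Either use this periodicity to match each $(1)$-orbit with a pair of $[2]$-orbits, or argue as the paper does (Lemma \ref{lem.x}(1)(2), via \cite[Lemma 4.13(5)]{MUk} and \cite[Lemma 5.2]{MUk}): the set $\M_\e$ is in bijection with the simple modules, and every indecomposable non-projective graded maximal Cohen--Macaulay module is a degree shift of a member of $\M_\e$.
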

Note that four graphical operations, called
mutation, relative mutation, Kn\"orrer reduction, and two points reduction,
are crucial for the proof of \thmref{thm.HU} (see \cite[Section 3]{HU}).

Let $X_1,\dots, X_\al \in \CM(A_\e)$ be the complete representatives
of the isomorphism classes of indecomposable non-projective graded maximal Cohen-Macaulay modules
where we assume that every $X_i$ is equipped with the grading $(X_i)_{<0}=0$ and $(X_i)_{0}\neq 0$.

Let $\grmod A_\e$ be the category of finitely generated right graded $A_\e$-modules,
and let $\qgr A_\e = \grmod A_\e/ \fdim A_\e$ be its quotient category by the category $\fdim A_\e$ of finite-dimensional modules.
The category $\qgr A_\e$ plays the role of the category of coherent sheaves on the projective scheme associated with $A_\e$.
The object in $\qgr A_\e$ that corresponds to a module 
$M \in \grmod A_\e$ is denoted by calligraphic letter $\cM$.

As a consequence of \thmref{thm.HU}, it was shown that $\qgr A_\e$ has a nice property.

\begin{thm}[{\cite[Theorem 1.4]{HU}}]
\label{thm.higuey}
Let $A_\e$ be as in Notation \ref{nota}.
Then $\gldim(\qgr A_\e)<\infty$, that is, $\qgr A_\e$ is smooth in the sense of Smith and Van den Bergh \cite{SV}.
\end{thm}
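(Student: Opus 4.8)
The plan is to deduce the statement from \thmref{thm.HU} by exhibiting $A_\e$ as a noncommutative graded isolated singularity and then invoking the general principle that smoothness of $\qgr$ (i.e.\ finiteness of its global dimension) is detected by finiteness of the stable category. First I would record the ring-theoretic properties of $A_\e$. Since $S_\e$ is a standard graded $(\pm 1)$-skew polynomial algebra, it is a Noetherian AS-regular algebra of global dimension $n$, and $f_\e = x_1^2+\cdots+x_n^2$ is central (because $x_i^2 x_j = \e_{ij}^2 x_j x_i^2 = x_j x_i^2$, as $\e_{ij}^2=1$) and regular of degree $2$. Hence $A_\e = S_\e/(f_\e)$ is a Noetherian AS-Gorenstein algebra of dimension $n-1$ and Gorenstein parameter $n-2$; that is, $A_\e$ is a noncommutative quadric hypersurface in the sense of \cite{SV}. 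In particular $A_\e$ admits a balanced dualizing complex and satisfies the $\chi$ condition, so the theory of graded maximal Cohen-Macaulay modules is available, and by Buchweitz's theorem the stable category $\uCM(A_\e)$ is identified with the graded singularity category $\D_{\mathsf{sg}}(A_\e)$.

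The core of the argument is the following characterization, which I would isolate as the key step: for a Noetherian AS-Gorenstein algebra $A$ as above, $\gldim \qgr A<\infty$ if and only if $A$ is a graded isolated singularity, which in turn holds precisely when $\uCM(A)$ is Hom-finite over $k$. The implication I actually need is that Hom-finiteness of $\uCM(A)$ forces $\gldim \qgr A<\infty$. This is an Auslander-type statement: Hom-finiteness of the stable category means that $A$ is singular only at the irrelevant ideal, and singularities away from the vertex are exactly what obstruct smoothness of the associated projective scheme. I would prove it by combining the local-cohomology description of $\qgr$ with Orlov's comparison between $\D_{\mathsf{sg}}(A)$ and $\Db(\qgr A)$, available because the Gorenstein parameter $n-2$ is positive: the semiorthogonal complement of the image of $\D_{\mathsf{sg}}(A)$ in $\Db(\qgr A)$ is generated by the finitely many line bundles $\cA_\e(-n+3),\dots,\cA_\e$, so uniform vanishing of the higher extension groups in $\qgr A_\e$ is reduced to the corresponding finiteness inside $\uCM(A_\e)$ together with the exceptionality of these line bundles.

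With this in hand the conclusion is immediate. By \thmref{thm.HU}\,(1) there is an equivalence $\uCM(A_\e)\cong \Db(\mod k^\al)$, and since $k^\al$ is a finite-dimensional (indeed semisimple) $k$-algebra, the category $\Db(\mod k^\al)$ is Hom-finite. Therefore $\uCM(A_\e)$ is Hom-finite, $A_\e$ is a graded isolated singularity, and $\gldim \qgr A_\e<\infty$, as required. Equivalently, one may read \thmref{thm.HU}\,(2) as saying that $A_\e$ has finite Cohen-Macaulay representation type, which already guarantees the isolated-singularity property.

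I expect the main obstacle to be the characterization itself, namely controlling the passage from finiteness of the \emph{stable} Hom-spaces to a \emph{uniform} bound on the extension groups $\Ext^i_{\qgr A_\e}(\cM,\cN)$. The difficulty is that a semiorthogonal decomposition only bounds extensions between individual objects, whereas finite global dimension of the abelian category $\qgr A_\e$ demands a bound independent of $\cM$ and $\cN$; bridging this gap requires the Auslander-type input identifying the isolated-singularity condition with Hom-finiteness of $\uCM(A_\e)$, and checking that its hypotheses (Noetherianity, AS-Gorensteinness, and the existence of a balanced dualizing complex) indeed hold for $A_\e$.
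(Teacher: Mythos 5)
There is a genuine gap, and it sits exactly where you yourself place the ``main obstacle.'' Your argument reduces the theorem to the claim that Hom-finiteness of $\uCM(A_\e)$ forces $\gldim(\qgr A_\e)<\infty$, but this claim is never proved, and it cannot be invoked as a known ``Auslander-type'' result: in the noncommutative graded setting the known implication goes the other way (a graded isolated singularity has Hom-finite stable category), while the converse is precisely the hard direction, since there is no localization theory of a singular locus with which to run Auslander's commutative argument. Your proposed route via Orlov's theorem founders on the very issue you flag: a semi-orthogonal decomposition $\Db(\qgr A_\e)=\< \cA_\e(-n+3),\dots,\cA_\e,\Phi\D_{\rm Sg}^{\Z}(A_\e)\>$ controls $\Ext$-groups between individual objects, whereas $\gldim(\qgr A_\e)<\infty$ demands a bound on $\Ext^q_{\qgr A_\e}(\cM,\cN)$ uniform over \emph{all} pairs $(\cM,\cN)$; nothing in your sketch bridges this, so the proposal is circular at its core (the ``Auslander-type input'' you defer to is the statement to be proven). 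A further soft spot: Orlov's theorem requires Gorenstein parameter $n-2>0$, while the statement carries no restriction on $n$.

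There is also a more elementary problem with the step ``$\Db(\mod k^\al)$ is Hom-finite, therefore $\uCM(A_\e)$ is Hom-finite.'' The morphism spaces of $\uCM(A_\e)$ are the \emph{degree-preserving} stable Homs $\Hom_{\uCM(A_\e)}(M,N)$, and these are finite-dimensional for any noetherian connected graded algebra (embed $\Hom_{\grmod A_\e}(M,N)$ into a finite direct sum of graded pieces of $N$); so this finiteness is automatic and carries no information about the singularity. The meaningful condition concerns the total graded spaces $\bigoplus_{s\in\Z}\Hom_{\uCM(A_\e)}(M,N(s))$, and transporting that through the equivalence of \thmref{thm.HU} requires tracking the internal shift functor (e.g.\ via $\Om^{2}X\cong X(-2)$, so that $(2)\cong[2]$ on $\uCM(A_\e)$), which you do not do. For comparison: this paper does not prove the statement at all --- it is imported from \cite[Theorem 1.4]{HU} --- and the proof there does not go through abstract Hom-finiteness but through the criterion of Mori and Ueyama \cite{MUk} that a noncommutative quadric hypersurface is a graded isolated singularity if and only if $C(A)$ is semisimple, combined with the computation $C(A_\e)\cong M_{\be}(k)^{\al}$ (reproved in this paper as \lemref{lem.c}). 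If you want an argument from the ingredients at hand, that semisimplicity criterion is the input you need, not an Auslander-type converse.
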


The main result of this paper is the following theorem. 

\begin{thm}[\thmref{thm.main}] \label{thm.main.i}
We use Notation \ref{nota} with $n\geq 3$.
Let $r=\mnull_{\F_2} \De_{\e}$ and $\al=2^r$.
Then $\Db(\qgr A_\e)$ has a full strong exceptional sequence of the form
\[
\Db(\qgr A_\e)=
\< \cA_\e(-n+3), \dots, \cA_\e(-1), \cA_\e, \cX_1, \cX_2, \dots, \cX_\al \>.
\]
Moreover, if $\cU= \cA_\e(-n+3) \oplus \cdots \oplus \cA_\e \oplus \cX_1 \oplus \cdots \oplus \cX_\al$,
then the following statements hold.
\begin{enumerate}
\item $\La_\e := \End_{\qgr A_\e}{\cU}$ is an extremely Fano algebra of $\gldim \La_\e = n-2$ in the sense of Minamoto \cite{Min}.
\item There exists an equivalence of triangulated categories $\Db(\qgr A_\e) \cong \Db(\mod \La_\e)$.
\item The $(n-1)$-preprojective algebra $\Pi \La_\e$ of $\La_\e$ is a right noetherian graded Calabi-Yau algebra of $\gldim \Pi\La_\e = n-1$
(in particular, an AS-regular algebra over $\La_\e$ in the sense of Minamoto and Mori \cite{MM}).
\item There exists an equivalence $\qgr A_\e \cong \qgr \Pi\La_\e$.
\end{enumerate}
\end{thm}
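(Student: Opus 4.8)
The plan is to build the exceptional sequence in two blocks and then verify that it is full and strong by transferring known structure through the stable category equivalence of \thmref{thm.HU}. The first block $\cA_\e(-n+3), \dots, \cA_\e$ should behave exactly as in the commutative Kapranov picture: these are the images in $\qgr A_\e$ of the twists of the structure module, and one expects the relevant $\Ext$-vanishing to follow from the computation of $\Ext$-groups between the $\cA_\e(-i)$, which in turn reduces to the Hilbert series / local cohomology of $A_\e$. Since $A_\e$ is a (noncommutative) quadric hypersurface that is a graded quotient of an AS-regular algebra $S_\e$ by a central regular element $f_\e$, I would first record the AS-Gorenstein property of $A_\e$ and the fact that $S_\e$ is a noetherian AS-regular algebra of global dimension $n$, so that $A_\e$ is AS-Gorenstein of dimension $n-1$. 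The $\Ext$-computations among the sheaves $\cA_\e(-i)$ for $0 \le i \le n-3$ then mirror the computation for $\cO_{\P^{n-2}}(-i)$ on a hypersurface, and strongness/exceptionality of this first block is essentially formal once the cohomology vanishing is in hand.

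The second block $\cX_1, \dots, \cX_\al$ is where the new content lies, and the key tool is the equivalence $\uCM(A_\e) \cong \Db(\mod k^\al)$ from \thmref{thm.HU}. The modules $X_i$ are precisely the indecomposable non-projective graded MCM modules, normalized by the grading condition $(X_i)_{<0}=0$, $(X_i)_0 \ne 0$. I would relate morphisms and extensions in $\qgr A_\e$ to morphisms in the stable category $\uCM(A_\e)$: because $A_\e$ has finite global dimension after passing to $\qgr$ (\thmref{thm.higuey}) and is AS-Gorenstein, the singularity category / stable MCM category controls the ``exotic'' part of $\Db(\qgr A_\e)$. Concretely, one uses the standard comparison that for MCM modules $M,N$ the stable Hom $\uHom(M,N)$ and the sheafy $\Ext$-groups $\Ext^j_{\qgr}(\cM,\cN)$ agree in the appropriate range (Serre's theorem plus the Orlov-type relationship between $\qgr$, graded MCM modules, and the stable category). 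Since $\uCM(A_\e) \cong \Db(\mod k^\al)$ and $k^\al$ is semisimple, the images of the $X_i$ form an exceptional (indeed orthogonal) collection in the stable category, giving the required $\Ext$-vanishing among the $\cX_i$ and, after the normalization of grading, the correct semiorthogonality between the two blocks.

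Having assembled the full strong exceptional sequence, parts (1)--(4) follow by invoking the general machinery of tilting theory and Minamoto--Mori Fano algebras. For (2), a full strong exceptional sequence yields a tilting object $\cU$, and $\End_{\qgr A_\e}(\cU) = \La_\e$ produces the derived equivalence $\Db(\qgr A_\e) \cong \Db(\mod \La_\e)$ by the standard tilting theorem; the global dimension bound $\gldim \La_\e = n-2$ comes from the length of the sequence together with the vanishing pattern (the sequence has $n-2+\al$ terms but the Fano/extremely-Fano condition forces $\gldim$ equal to the ``projective dimension'' $n-2$ coming from the $\cA_\e(-i)$ strand). For (1), I would check the defining numerical conditions of an extremely Fano algebra of Minamoto \cite{Min} directly from the $\Ext$-data already computed, showing the algebra is extremely Fano of the stated global dimension. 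For (3) and (4), I would apply Minamoto--Mori's correspondence \cite{MM} between Fano algebras and AS-regular algebras over $\La_\e$: the $(n-1)$-preprojective algebra $\Pi\La_\e$ is the relevant AS-regular algebra, its Calabi--Yau and noetherian properties follow from the extremely Fano structure, and the equivalence $\qgr A_\e \cong \qgr \Pi\La_\e$ is the geometric incarnation of this correspondence.

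The hard part will be establishing the semiorthogonality and strongness across the junction between the two blocks, i.e.\ proving $\Ext^j_{\qgr A_\e}(\cA_\e(-i), \cX_m) = 0$ for $j>0$ and the reverse vanishing. This requires carefully tracking the grading normalization $(X_i)_{<0}=0$, $(X_i)_0 \ne 0$ through the equivalence of \thmref{thm.HU} and pinning down exactly which shifts/twists of the MCM modules occur, since the combinatorial origin of the $X_i$ (via the four graphical operations) does not directly expose these $\Ext$-groups. I expect to need a precise dictionary between the $t$-structure on $\Db(\qgr A_\e)$ and the standard one on $\uCM(A_\e)$, likely via an Orlov-type semiorthogonal decomposition relating $\Db(\qgr A_\e)$, the singularity category, and the exceptional block coming from the $\cA_\e(-i)$.
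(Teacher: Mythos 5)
Your construction of the sequence itself follows the paper's own route: the first block is handled by Serre duality plus Hilbert series arguments (the paper's \lemref{lem.HE} and \lemref{lem.hil}), the second block by semisimplicity coming from $C(A_\e)\cong M_\be(k)^{\al}$ (equivalently, $\uCM(A_\e)\cong \Db(\mod k^{\al})$), and the junction between the blocks by Orlov's theorem together with Amiot's lemma identifying $\Phi(\upsilon X_i)\cong \cX_i$ (\lemref{lem.Ami}, \lemref{lem.homx}, \lemref{lem.Amx}). That part of your plan is sound and is essentially what the paper does in \lemref{lem.2a}.

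The genuine gap is in how you derive (1), (3), (4). You treat them as formal consequences of ``tilting theory plus the Minamoto--Mori correspondence'' once the full strong exceptional sequence is in hand, but a full strong exceptional sequence only buys you (2). The extremely Fano property of $\La_\e$ is \emph{not} the strongness condition: unwinding Minamoto's definition through the derived equivalence, it requires $\Ext^q_{\qgr A_\e}(\cU, \cU\otimes_{\cA_\e}(\om_{\cA_\e}^{-1})^{\otimes i})=0$ for all $q\neq 0$ and \emph{all} $i\geq 0$, i.e.\ vanishing against every non-negative power of the inverse canonical twist, not just the case $i=0$. This is the paper's \lemref{lem.2b}, and its proof genuinely needs the Hilbert series of $X_j$ and of $\Hom_{A_\e}(X_j,A_\e)$ (\lemref{lem.hil}(2),(3)) to kill the cross terms $\Ext^{n-2}_{\qgr A_\e}(\cA_\e(s),\cX_j(t'))$ and $\Ext^{n-2}_{\qgr A_\e}(\cX_j,\cA_\e(t))$ at arbitrary positive twists $t,t'$ --- data your ``Ext-data already computed'' does not contain, since it only covers the sequence itself; the t-structure half of the extremely ample condition is likewise not addressed. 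Moreover, even granting extremely Fano-ness, the Minamoto--Mori/HIO correspondence only yields that $\Pi\La_\e$ is \emph{coherent} Calabi--Yau; the right-noetherian property in (3) and the equivalence $\qgr A_\e\cong\qgr\Pi\La_\e$ in (4) require a second verification that you never mention: ampleness of the pair $(\cU,-\otimes_{\cA_\e}\om_{\cA_\e}^{-1})$ in the sense of Artin--Zhang. The paper proves this in \lemref{lem.2c} (using finite-dimensionality of $\Ext^1_{A_\e}(X_j,K)$ and right-boundedness results from \cite{AZ}), and then \lemref{lem.2a}, \lemref{lem.2b}, \lemref{lem.2c} together feed into the regular-tilting-object machinery of \cite{MUq} (\thmref{thm.MUq}), which is what actually delivers all four statements at once. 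Without these two verifications your derivation of (1), (3), (4) does not go through.
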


Note that extremely Fano algebras of global dimension $d$
are closely related to $d$-representation-infinite algebras
which play a key role in higher Auslander-Reiten theory \cite{HIO}.
In \thmref{thm.quiv}, we describe the quiver and relations of $\La_\e$.

Now consider the case $\e_{ij}=1$ for all $1\leq i<j \leq n$ so that $A_\e$ is commutative.
It is well-known that $\qgr A_\e \cong \coh X$ for any smooth quadric hypersurface $X$ in $\P^{n-1}$.  
Besides, we have
\[ \mnull_{\F_2} \De_{\e}
=\begin{cases}
0 & \text{if $n$ is odd,}\\
1 & \text{if $n$ is even,}
\end{cases}
\]
so we can recover \thmref{thm.K} from \thmref{thm.main.i}.

Roughly speaking, \thmref{thm.main.i} states that $\cU$ is a nice tilting object in $\Db(\qgr A_\e)$, so this paper makes a contribution to tilting theory.
Since tilting theory often enables us to construct an equivalence between a given triangulated category and the derived category of modules over the endomorphism algebra of a tilting object, it is now indispensable for the study of triangulated categories.
For example, it is well-known that $\Db(\coh X)$ has a tilting object if $X$ is a Grassmannian \cite{K2}, \cite{BLV}, or a rational surface \cite{HP}. 
Furthermore, it is well-known that $\uCM(A)$ has a tilting object if $A$ is the trivial extension algebra $\Delta \La$ of a finite-dimensional algebra $\La$ of finite global dimension \cite{H}, or a graded simple surface singularity \cite{GL1}, \cite{GL2}, \cite{KST}.
If $A$ is a noetherian graded (not necessary commutative) Gorenstein algebra,
then there are strong connections between tilting theories of $\Db(\qgr A)$ and $\uCM(A)$.
For example, Orlov's theorem \cite{O}, \cite{IY} (see \thmref{thm.O} for a particular case)
is a powerful result which gives an embedding between $\Db(\qgr A)$ and $\uCM(A)$, and leads some interesting applications to tilting theory
(see \cite{HIMO}, \cite{BIY},  \cite{MUs}, \cite{T}).
Orlov's theorem also plays an important role in this paper.

If $n=2$, then one can verify that $\Db(\qgr A_\e)$ is equivalent to $\Db(\mod k^2)$.
From our results it follows easily that if $n=3$, then $\Db(\qgr A_\e)$ is equivalent to $\Db(\mod kQ)$
where $Q$ is an extended Dynkin quiver of type $\widetilde{A_1}$ or type $\widetilde{D_4}$ (\exref{ex.n3}).
In the last section (Section 4), as an application of our results, we give the classification of $\Db(\qgr A_\e)$ with $n=4$
via Hochschild cohomology of $\La_\e$.

The author hopes that our results will inspire more work which lies at the intersection of noncommutative algebraic geometry, representation theory of algebras, and combinatorics.

\section{Preliminaries}

\subsection{Basic Notation}
In this paper, a graded algebra means an $\N$-graded algebra over $k$ unless otherwise stated.
Recall that a graded algebra $A=\bigoplus_{i \in \N} A_i$ is \emph{connected} if $A_0 =k$, and it is \emph{locally finite} if $\dim_k A_i < \infty$ for all $i \in \N$.
Note that a right noetherian connected graded algebra is locally finite.

For a graded algebra $A$, we denote by $\GrMod A$ the category of graded right $A$-modules with degree preserving $A$-module homomorphisms,
and by $\grmod A$ the full subcategory consisting of finitely generated graded $A$-modules.
Note that if $A$ is right noetherian, then $\grmod A$ is an abelian category.
We denote by $A^{\op}$ the opposite algebra of $A$.
The category of graded left $A$-modules is identified with $\GrMod A^{\op}$.
We denote by $A^{\en}$ the enveloping algebra of $A$.
The category of graded  $A$-$A$ bimodules is identified with $\GrMod A^{\en}$.

For a graded module $M \in \GrMod A$ and an integer $s\in \Z$,
we define  the \emph{truncation} $M_{\geq s} := \bigoplus_{i\geq s} M_i$
and the \emph{internal shift} $M(s)$ by $M(s)_i = M_{s+i}$.
Note that the rule $M \mapsto M(s)$ is a $k$-linear autoequivalence for $\GrMod A$ and $\grmod A$, called the \emph{internal shift functor}.
For $M, N\in \GrMod A$, we write $\Ext^i_{\GrMod A}(M, N)$ for the extension group in $\GrMod A$, and define
\[\Ext^i_A(M, N):=\bigoplus _{s \in \Z}\Ext^i_{\GrMod A}(M, N(s)).\]

Let $A$ be a right noetherian locally finite $\N$-graded algebra.
We denote by $\fdim A$ the full subcategory of $\grmod A$ consisting of finite-dimensional modules over $k$.
Then the Serre quotient category
\[ \qgr A := \grmod A/\fdim A.\]
is an abelian category.
If $A$ is a commutative graded algebra finitely generated in degree 1 over $k$, then $\qgr A$ is equivalent to
the category $\coh(\Proj A)$ of coherent sheaves on the projective scheme $\Proj A$ by Serre's theorem. 
For this reason, $\qgr A$ is called the \emph{noncommutative projective scheme} associated with $A$.
The study of noncommutative projective schemes has been one of the major themes in noncommutative algebraic geometry
 (see \cite{AZ} for basic information about noncommutative projective schemes).

Let $\pi : \grmod A \to \qgr A$ be the (exact) quotient functor.
The objects of $\qgr A$ will be denoted in calligraphic font (e.g., $\cM = \pi M$, $\cA=\pi A$, $\cX=\pi X$).
Note that the $k$-linear autoequivalence $M \mapsto M(s)$ for $\grmod A$ induces
a $k$-linear autoequivalence $\cM\mapsto \cM(s)$ for $\qgr A$, again called the \emph{internal shift functor}.
For $\cM, \cN\in \qgr A$, we write $\Ext^i_{\qgr A}(M, N)$ for the extension group in $\qgr A$, and define
\[\Ext^i_{\cA}(\cM, \cN):=\bigoplus _{s \in \Z} \Ext^i_{\qgr A}(\cM, \cN(s)).\]
The \emph{global dimension of $\qgr A$} is defined by
\[ \gldim (\qgr A) := \sup\{i \mid \Ext^i_{\qgr A}(\cM, \cN) \neq 0\; \text{for some}\; \cM, \cN \in \qgr A \}.\]
It is easy to see that if $\gldim A<\infty$, then $\gldim(\qgr A)<\infty$.
The condition $\gldim (\qgr A)<\infty$ is considered as the smoothness of $\qgr A$ and the noncommutative graded isolated singularity property of $A$
(see \cite{SV}, \cite{Uis}, \cite{Ucm}).

For example, if $A_\e$ is as in Notation \ref{nota}, then $\gldim(\qgr A_\e)<\infty$ by \cite[Theorem 1.4]{HU}.

\begin{dfn}
A noetherian connected graded algebra $A$ is called
\emph{AS-regular (resp.\, AS-Gorenstein) of dimension $d$ and Gorenstein parameter $\ell$} if
\begin{itemize}
\item{} $\gldim A = d <\infty$ (resp.\, $\injdim_A A = \injdim_{A^{\op}} A= d <\infty$), and
\item{} $\Ext^i_A(k ,A) \cong \Ext^i_{A^{\op}}(k, A) \cong
\begin{cases}
k(\ell) & \text { if } i=d,\\
0 & \text { if } i\neq d.
\end{cases}$
\end{itemize}
\end{dfn}

For example, if $S_\e$ and $A_\e$ are as in Notation \ref{nota}, 
then $S_\e$ is a noetherian AS-regular algebra of dimension $n$ and Gorenstein parameter $n$
and $A_\e$ is a noetherian AS-Gorenstein algebra of dimension $n-1$ and Gorenstein parameter $n-2$.

Let $A$ be a noetherian AS-Gorenstein algebra.
We call $M \in \grmod A$ \emph{graded maximal Cohen-Macaulay} if $\Ext^i_A(M ,A) =0$ for all $i \neq 0$.
We write $\CM(A)$ for the full subcategory of $\grmod A$ consisting of graded maximal Cohen-Macaulay modules.
The \emph{stable category of graded maximal Cohen-Macaulay modules}, denoted by $\uCM(A)$, has the same objects as $\CM(A)$
and the morphism space
\[ \Hom_{\uCM(A)}(M, N) = \Hom_{\CM(A)}(M,N)/P(M,N) \]
where $P(M,N)$ consists of degree preserving $A$-module homomorphisms factoring through a graded projective module.

A \emph{triangulated category} is an additive category $\sT$ with an autoequivalence
$[1]: \sT \to \sT$ (called \emph{translation functor}) and a class of sequences $X \to Y \to Z \to X[1]$
(called \emph{distinguished triangles}), satisfying certain axioms.
For example, the bounded derived category $\Db(\sC)$ of an abelian category $\sC$ is a triangulated category with the translation functor $[1]$ induced by the shift of complexes.

If $A$ is a noetherian AS-Gorenstein algebra, then $\CM(A)$ is a Frobenius category, so $\uCM(A)$ is a triangulated category whose translation functor $[1]$ is given by 
the cosyzygy functor $\Omega^{-1}$ (see \cite[Section 4]{Bu}, \cite[Theorem 3.1]{SV}).

\subsection{Serre Duality}

For $M \in \GrMod A$ and a graded algebra automorphism $\si$ of $A$, we define the \emph{twist} $M_\si \in \GrMod A$ by
$M_\si = M$ as a graded $k$-vector space with the new right action $m*a = m\si(a)$.
Note that $\si$ induces $A_\si \cong A$ in $\GrMod A$.

If $A$ is a noetherian AS-Gorenstein algebra of dimension $d$ and Gorenstein parameter $\ell$,
then $A$ has a balanced dualizing complex $A_\nu(-\ell)[d] \in \Db(\GrMod A^{\en})$ for some graded algebra automorphism
$\nu$ of $A$. This graded algebra automorphism $\nu$ is called the \emph{(generalized) Nakayama automorphism} of $A$.
Further, the graded $A$-$A$ bimodule $\om_A := A_\nu(-\ell) \in \GrMod A^{\en}$ is called the \emph{canonical module} over $A$.
The autoequivalence $-\otimes_A \om_A: \grmod A \to \grmod A$ induces an autoequivalence
$- \Lotimes_\cA \om_{\cA} : \Db(\qgr A) \to \Db(\qgr A)$.

\begin{lem}[Serre duality \cite{dNV}] \label{lem.Se}
Let $A$ be a noetherian AS-Gorenstein algebra of dimension $d \geq 1$ and Gorenstein parameter $\ell$.
If $\gldim (\qgr A) <\infty$, then $\Db(\qgr A)$ has the Serre functor $- \Lotimes_\cA \om_{\cA}[d-1]$.
In particular, for $\cM,\cN \in \qgr A$, we have 
\[\Ext^{q}_\cA(\cM, \cN) \cong  D\Ext^{d-1-q}_\cA(\cN,\cM_\nu(-\ell))\]
where $D$ denotes the graded $k$-duality.
\end{lem}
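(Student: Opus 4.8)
The plan is to exhibit $\mathbb{S} := - \Lotimes_\cA \om_\cA[d-1]$ as a Serre functor on $\Db(\qgr A)$; the displayed $\Ext$-formula then follows by passing to cohomology in the defining isomorphism $\Hom_{\Db(\qgr A)}(\cM, \cN) \cong D\Hom_{\Db(\qgr A)}(\cN, \mathbb{S}\cM)$ and summing over all internal shifts. First I would check that $\mathbb{S}$ is a well-defined autoequivalence with the correct target: since $\om_A = A_\nu(-\ell)$ with $\nu$ a graded automorphism, the functor $- \otimes_A \om_A$ is an autoequivalence of $\grmod A$ that preserves the torsion subcategory $\fdim A$, hence descends to an autoequivalence of $\qgr A$ and of $\Db(\qgr A)$. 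The hypothesis $\gldim(\qgr A) < \infty$ guarantees that $\Db(\qgr A)$ is $\Ext$-finite, so that the $k$-dual $D$ appearing in the statement is meaningful and a Serre functor can exist at all.

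The central input is the balanced dualizing complex of $A$. Because $A$ is AS-Gorenstein of dimension $d$ and Gorenstein parameter $\ell$, its balanced dualizing complex is $R = \om_A[d] = A_\nu(-\ell)[d]$, and local duality (Van den Bergh) provides a natural isomorphism $\mathrm{R}\Gamma_{\m}(M) \cong D\RHom_A(M, R)$ in $\Db(\GrMod A)$ for every $M \in \Db(\grmod A)$, where $\mathrm{R}\Gamma_{\m}$ is the derived torsion functor with respect to the graded maximal ideal $\m$. I would take this, together with the explicit form of $R$, as the engine of the whole argument.

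Next I would transport this identity across the quotient $\pi : \grmod A \to \qgr A$. Writing $\cM = \pi M$, $\cN = \pi N$, one has $\Ext^i_{\qgr A}(\cM, \cN(s)) \cong \varinjlim_t \Ext^i_{\grmod A}(M_{\geq t}, N(s))$, and the defect between $\RHom_A$ and its image in $\qgr A$ is measured precisely by the two $\mathrm{R}\Gamma_{\m}$-terms sitting in the fundamental torsion triangle relating $M$, $\mathrm{R}\Gamma_{\m}(M)$, and the section complex of $\cM$. Feeding local duality into this triangle and summing over $s \in \Z$ converts $\Ext_\cA(\cM, \cN)$ into the $k$-dual of $\Ext_\cA$ into $\cM \Lotimes_\cA \om_\cA = \cM_\nu(-\ell)$; the single cohomological degree lost, so that the shift is $[d-1]$ rather than $[d]$, reflects the drop in dimension from $A$ to $\qgr A$ and emerges from the colimit, in which the finite-dimensional contributions of $\mathrm{R}\Gamma_{\m}$ in the bottom degree are annihilated. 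Finiteness of $\gldim(\qgr A)$ ensures these complexes are bounded, so the bookkeeping terminates.

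I expect the main obstacle to be exactly this last passage: controlling the colimit $\varinjlim_t$ and verifying that the finite-dimensional torsion contributions vanish in the limit, so that the degree shift is exactly $d - 1$ and the twisting object is exactly $\om_\cA$ and not merely isomorphic to it up to an ambiguous shift. Equally delicate is checking that all the isomorphisms produced (local duality, the torsion triangle, and the colimit identification) are natural in both variables, since a genuine Serre functor requires a natural isomorphism of bifunctors, not just an equality of dimensions. Once naturality is in place, restricting to a single cohomological degree $q$ and a single pair $\cM, \cN \in \qgr A$ yields the stated formula $\Ext^q_\cA(\cM, \cN) \cong D\Ext^{d-1-q}_\cA(\cN, \cM_\nu(-\ell))$.
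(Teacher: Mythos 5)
First, a point of comparison: the paper does not prove this lemma at all; it quotes it as a known result of de Naeghel--Van den Bergh \cite{dNV} (which in turn builds on Yekutieli--Zhang's noncommutative Serre duality). So your proposal is really an attempt to reconstruct the proof in that reference, and the ingredients you assemble --- the balanced dualizing complex $A_\nu(-\ell)[d]$, Van den Bergh local duality $\mathrm{R}\Gamma_{\m}(N)\cong D\RHom_A(N,A_\nu(-\ell)[d])$, the torsion triangle, and the Artin--Zhang colimit formula --- are indeed the engine of that proof.

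Nevertheless there is a genuine gap, and it sits exactly at the step you compress into the phrase ``feeding local duality into this triangle \dots converts $\Ext_\cA(\cM,\cN)$ into the $k$-dual of $\Ext_\cA(\cN,\cM_\nu(-\ell))$.'' Unwinding that conversion requires Hom-tensor identities of the form $\RHom_A(N,M)\cong M\Lotimes_A\RHom_A(N,A)$, which hold only when the relevant module is \emph{perfect} over $A$; since a noetherian AS-Gorenstein algebra of the kind at issue (e.g.\ $A_\e$ itself, a hypersurface) has infinite global dimension, finitely generated modules are typically not perfect, and this step fails in $\grmod A$. Moreover, your two stated uses of the hypothesis $\gldim(\qgr A)<\infty$ (Ext-finiteness, and boundedness ``so the bookkeeping terminates'') are misattributions: finite-dimensionality of each $\Ext^i_{\qgr A}(\cM,\cN)$ follows from the $\chi$-condition, which every noetherian AS-Gorenstein algebra satisfies, smooth or not. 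The decisive test is this: for $A=k[x,y,z,w]/(xy-z^2)$ every tool you invoke is available (balanced dualizing complex $A(-2)[3]$, local duality, torsion triangles, colimits), yet the conclusion is false --- $\Db(\qgr A)\cong\Db(\coh X)$ for the singular quadric cone $X$ admits no Serre functor, because a Serre functor $S$ would force $\Ext^i_X(\cF,\cF)\cong D\Hom_X(\cF,S\cF[-i])$ to vanish for $i\gg 0$, contradicting the nonvanishing self-extensions in all degrees of the skyscraper at the singular point. Hence any correct argument must use smoothness structurally, not just for bookkeeping: one first establishes the duality when $\cM$ is a shift $\cA(n)$ (this is where your local-duality computation is valid --- it is duality against the structure sheaf, true even without smoothness), and then uses $\gldim(\qgr A)<\infty$ to propagate the natural isomorphism from the objects $\cA(n)$ to arbitrary objects of $\Db(\qgr A)$, a perfection/generation argument that your sketch never addresses. (A side remark: the shift $[d-1]$ rather than $[d]$ does not ``emerge from the colimit'' but from the torsion triangle itself, via $R^{i}\omega\pi N\cong H^{i+1}_{\m}(N)$ for $i\geq 1$.)
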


The \emph{depth} of $M$ in $\grmod A$ is defined by $\depth M := \inf\{i \mid \Ext^i_A(k,M) \neq 0\}$.
It is well-known that if $A$ is a noetherian AS-Gorenstein algebra of dimension $d$ and $M \in \CM(A)$ is nonzero, then $\depth M =d$.

We will need to use the following lemma.

\begin{lem} \label{lem.HE}
Let $A$ be a noetherian AS-Gorenstein algebra of dimension $d \geq 2$ and Gorenstein parameter $\ell$.
If $\gldim (\qgr A) <\infty$, then for $M,N \in \CM(A)$,
\begin{enumerate}
\item $\Ext^q_\cA(\cM, \cN) \cong \Ext^q_A(M, N)$ for $0 \leq q \leq d-2$.
\item $\Ext^{d-1}_\cA(\cM, \cN) \cong D\Hom_A(N, M_\nu(-\ell))$.
\item $\Ext^{q}_\cA(\cM, \cN) =0$ for $q \geq d$.
\end{enumerate}
\end{lem}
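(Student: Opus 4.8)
The plan is to relate $\qgr$-extension groups to ordinary graded $\Ext$-groups over $A$ by using the fundamental relationship between local cohomology and the quotient functor $\pi$. The key input is the standard exact sequence (due to Artin–Zhang) relating $\grmod A$-morphisms, $\qgr A$-morphisms, and local cohomology. Recall that for a noetherian locally finite graded algebra, and for $M, N \in \grmod A$, there is a long exact sequence connecting $\Ext^i_A(M,N)$, $\Ext^i_{\cA}(\cM,\cN)$, and the derived functors of the section/torsion functor. Concretely, the comparison is governed by the local cohomology $\H^i_{\m}(N)$ (equivalently $\RuG N$), where $\m$ is the graded maximal ideal. For $M,N$ that are maximal Cohen-Macaulay, the depth condition $\depth N = d$ forces $\H^i_{\m}(N) = 0$ for $i < d$, and only $\H^{d-1}_{\m}$ and $\H^{d}_{\m}$ survive in the relevant range. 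This vanishing is exactly what makes the comparison clean in the stated range of degrees.

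\textbf{Step 1.} First I would record the precise form of the Artin–Zhang comparison: for $M,N\in\grmod A$ there is a spectral sequence (or, after summing over shifts, a long exact sequence) of the shape
\[
\cdots \to \Ext^{q}_A(M,N) \to \Ext^{q}_{\cA}(\cM,\cN) \to \bigoplus_{s}\H^{q+1}_{\m}\big(\RHom_A(M,N(s))\big)\to \cdots,
\]
whose error terms are built from the local cohomology $\H^{\ge 1}_{\m}$. The point is that for $M,N \in \CM(A)$ one has $\RHom_A(M,N)$ concentrated in cohomological degree $0$ (this is precisely the CM condition $\Ext^i_A(M,N)=0$ for $i\neq 0$, which I would verify holds for Hom into $N$ since $N$ is MCM and $M$ is a finitely generated module of finite projective resolution by MCM modules — more directly, $\uHom$ and the resolution structure make $\RHom_A(M,N)$ have amplitude controlled by the syzygies). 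So the error terms reduce to the local cohomology of the single module-level Hom.

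\textbf{Step 2.} Next I would exploit the depth/CM vanishing. Since $N\in\CM(A)$ is nonzero, $\depth N = d$, so $\H^i_{\m}(N)=0$ for $0\le i\le d-1$ and $\H^d_{\m}(N)$ is the only nonzero local cohomology (by AS-Gorenstein duality, $\H^d_{\m}(N)\cong D(\Ext^0_A(N,\om_A)) = D\Hom_A(N,M_\nu(-\ell))$ in the relevant pairing — this is where the canonical module $\om_A=A_\nu(-\ell)$ and Lemma~\ref{lem.Se}'s duality enter). Feeding this vanishing into the long exact sequence immediately gives part (1): the error terms $\H^{q+1}_{\m}(\cdots)$ vanish for $q+1 \le d-1$, i.e. for $q\le d-2$, so $\Ext^q_\cA(\cM,\cN)\cong\Ext^q_A(M,N)$ in that range. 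For part (3), the top local cohomology $\H^d_\m$ is the last nonzero term, so for $q\ge d$ the groups $\Ext^q_\cA(\cM,\cN)$ vanish (using $\gldim(\qgr A)<\infty$ together with Serre duality of Lemma~\ref{lem.Se}, which shifts high-degree Ext into low-degree Ext that already vanishes by the CM/depth argument).

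\textbf{Step 3.} For the boundary case (2), at $q=d-1$ the long exact sequence has a surviving contribution from $\H^{d}_{\m}(N)$. The cleanest route is actually to bypass the spectral sequence and instead apply Serre duality directly: by Lemma~\ref{lem.Se},
\[
\Ext^{d-1}_\cA(\cM,\cN)\cong D\,\Ext^{0}_\cA(\cN,\cM_\nu(-\ell)),
\]
and then part (1), applied with $q=0\le d-2$ (valid since $d\ge 2$), identifies $\Ext^0_\cA(\cN,\cM_\nu(-\ell))\cong\Hom_A(N,M_\nu(-\ell))$. Composing gives exactly the asserted isomorphism $\Ext^{d-1}_\cA(\cM,\cN)\cong D\Hom_A(N,M_\nu(-\ell))$. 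This self-referential use of (1) to prove (2) is elegant and avoids ever computing the boundary map explicitly.

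\textbf{The main obstacle} I anticipate is Step 1: pinning down the correct comparison between $\Ext_A$ and $\Ext_\cA$ with its error terms, and justifying that for $M,N\in\CM(A)$ the complex $\RHom_A(M,N)$ behaves well enough (i.e. that one really reduces to the single local cohomology module rather than a genuine spectral sequence with several nonzero rows). The hypothesis $M\in\CM(A)$ is used to kill $\Ext^{>0}_A(M,N)$, but some care is needed to confirm that the torsion/section comparison isolates precisely $\H^{d-1}_\m$ and $\H^d_\m$ of $N$ and nothing else; the condition $d\ge 2$ is what guarantees the range $0\le q\le d-2$ is nonempty and that the Serre-duality trick for (2) lands in the already-established range (1). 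Once the comparison sequence and the depth vanishing are in hand, parts (1)–(3) follow by straightforward bookkeeping.
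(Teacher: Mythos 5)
Your overall route is the paper's own: an Artin--Zhang comparison whose error terms are killed by $\depth N=d$ for part (1), Serre duality (\lemref{lem.Se}) composed with part (1) applied to the pair $(N, M_\nu(-\ell))$ for part (2), and Serre duality pushing $q\geq d$ into negative cohomological degrees for part (3). Parts (2) and (3) are correct as written (for (2) one should also note that $M_\nu(-\ell)$ is again in $\CM(A)$, which the paper records via $\depth M_\nu(-\ell)=\depth M=d$). However, your Step 1 contains a genuine error. You assert that for $M,N\in\CM(A)$ the complex $\RHom_A(M,N)$ is concentrated in degree $0$, glossing this as ``precisely the CM condition $\Ext^i_A(M,N)=0$ for $i\neq 0$.'' That is not the CM condition: maximal Cohen--Macaulayness of $M$ means $\Ext^i_A(M,A)=0$ for $i\neq 0$, i.e.\ vanishing against the free module $A$, and it gives no vanishing against another MCM module $N$. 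In the paper's own setting the claim is false: \lemref{lem.extx} shows $\Ext^q_{\grmod A_\e}(X,Y(-q))\cong k$ for every $q\geq 1$ whenever $X\cong \Om^{-q}Y(-q)$, so higher extensions between MCM modules over the non-regular Gorenstein algebra $A_\e$ are typically nonzero (they are stable/Tate cohomology groups, which never die for modules of infinite projective dimension). A second defect of the same step: for right modules over a noncommutative $A$, $\Hom_A(M,N)$ is only a graded vector space, not an $A$-module, so your error terms $\H^{q+1}_{\m}\bigl(\RHom_A(M,N(s))\bigr)$ are not even well-formed, and commuting the torsion functor past $\RHom_A(M,-)$ would in any case require $M$ to be perfect, which it is not here.

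Fortunately the false claim is not needed, because the correct comparison keeps the local cohomology on $N$ alone rather than on the Hom complex. This is exactly what the paper does: by \cite[Proposition 7.2 (1)]{AZ}, $\Ext^q_\cA(\cM,\cN)\cong \lim_{n\to\infty}\Ext^q_A(M_{\geq n},N)$, and comparing with $\Ext^q_A(M,N)$ through the long exact sequences of $0\to M_{\geq n}\to M\to M/M_{\geq n}\to 0$, the error terms are $\lim_{n\to\infty}\Ext^q_A(M/M_{\geq n},N)$, which vanish for $0\leq q\leq d-1$ because each $M/M_{\geq n}$ is finite-dimensional and $\depth N=d$; this yields (1) in the range $q\leq d-2$ with no hypothesis on $\RHom_A(M,N)$ whatsoever. (Equivalently, one may apply $\RHom_A(M,-)$ to the torsion triangle for $N$ and use $\RuG N\simeq \H^d_{\m}(N)[-d]$, which is the precise content of your depth observation.) With Step 1 repaired in this way, your Steps 2 and 3 go through verbatim and reproduce the paper's proof.
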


\begin{proof} 
We only need to check when $M,N$ are nonzero.

(1) Since $\dim_k M/M_{\geq n} <\infty$ and $\depth N =d$, we have
$\lim_{n \to \infty}  \Ext^{q}_A(M/M_{\geq n}, N) =0$
for $0 \leq q \leq d-1$, so it follows from \cite[Proposition 7.2 (1)]{AZ} that 
\begin{align*}
\Ext^q_\cA(\cM, \cN) \cong \lim_{n \to \infty} \Ext^q_A(M_{\geq n}, N) \cong \Ext^q_A(M, N)
\end{align*}
for $0 \leq q \leq d-2$.

(2) Since $\depth M_{\nu}(-\ell) =\depth M=d$,
\begin{align*}
\Ext^{d-1}_\cA(\cM, \cN) \cong D\Hom_\cA(\cN, \cM_\nu(-\ell)) \cong D\Hom_A(N, M_\nu(-\ell))
\end{align*}
by \lemref{lem.Se} and (1).

(3) This follows from \lemref{lem.Se} immediately.
\end{proof}

Besides, using \lemref{lem.Se}, we can easily see that if $A$ is an AS-Gorenstein algebra of dimension $d \geq 1$
and $\gldim (\qgr A) <\infty$, then $\gldim (\qgr A)=d-1$.

For example, if $A_\e$ is as in Notation \ref{nota} with $n \geq 3$,
then $A_\e$ is an AS-Gorenstein algebra of dimension $n-1 \geq 2$, and $\Db(\qgr A_\e)$ has the Serre functor.

\subsection{Orlov's Theorem}
\begin{dfn}
Let $\sT$ be a triangulated category.
A \emph{semi-orthogonal decomposition} of  $\sT$ 
is a sequence  $\sE_0,\dots, \sE_{\ell-1}$ of strictly full triangulated subcategories such that
\begin{enumerate}
\item  for all $0 \leq i < j \leq \ell-1$ and all objects $E_i \in \sE_i, E_j \in \sE_j$, one has
$\Hom_{\sT} (E_j, E_i ) = 0$, and
\item the smallest strictly full triangulated subcategory of $\sT$ containing
$\sE_1,\dots, \sE_n$ coincides with $\sT$. 
\end{enumerate}
\end{dfn}
We use the notation $\sT = \<\sE_0,\dots,\sE_{\ell-1}\>$
for a semi-orthogonal decomposition of $\sT$ with components $\sE_0,\dots,\sE_{\ell-1}$.
Note that special and important examples of semi-orthogonal decompositions are provided by exceptional sequences
of objects (see \dfnref{dfn.res}).

Let $A$ be a noetherian AS-Gorenstein algebra.
Then the \emph{graded singularity category} of $A$ is defined by the Verdier localization 
\[ \D_{\rm Sg}^{\Z}(A) := \Db(\grmod A)/ \perf^{\Z} A \]
where $\perf^{\Z} A$ is the thick subcategory of $\Db(\grmod A)$
consisting of \emph{perfect complexes}, that is,
complexes of finite length whose terms are finitely generated projective modules.
We denote the localization functor by $\upsilon: \Db(\grmod A)\to \D_{\rm Sg}^{\Z}(A)$.
Moreover, there exists the equivalence $\uCM(A) \xrightarrow{\sim} \D_{\rm Sg}^{\Z}(A)$ by Buchweitz \cite{Bu}.

\begin{thm}[{Orlov's theorem \cite[Theorem 2.5]{O} (cf. \cite[Corollary 2.8]{IY})}] \label{thm.O}
Let $A$ be a noetherian AS-Gorenstein algebra of Gorenstein parameter $\ell$.
If $\ell > 0$, then there exists a fully faithful functor
$\Phi:=\Phi_0:\D_{\rm Sg}^{\Z}(A)\to \Db(\qgr A)$ and a semi-orthogonal decomposition
\[ \Db(\qgr A) = \< \cA(-\ell+1),\dots, \cA(-1), \cA, \Phi\D_{\rm Sg}^{\Z}(A) \> \]
where $\cA(i)$ denotes the full triangulated subcategory generated by the object $\cA(i)$.
\end{thm}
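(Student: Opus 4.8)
The plan is to realize the two categories on the right of the decomposition as Verdier quotients of the single ambient category $\Db(\grmod A)$ and then to compare these quotients. On one side, since $A$ is noetherian and $\qgr A = \grmod A/\fdim A$ is a Serre quotient, a standard result on localization of derived categories identifies $\Db(\qgr A)$ with $\Db(\grmod A)/\cT$, where $\cT$ denotes the thick subcategory of complexes having finite-dimensional total cohomology. On the other side, by definition $\D_{\rm Sg}^{\Z}(A) = \Db(\grmod A)/\perf^{\Z} A$. Thus the statement becomes a comparison between killing the torsion $\cT$ and killing the perfect complexes $\perf^{\Z} A$, and the collection $\cA(-\ell+1),\dots,\cA$ is forced to measure exactly the discrepancy between these two operations.

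To construct $\Phi$ I would use a grade-restriction window. For an integer $i$, let $\Db(\grmod A)_{\geq i}$ be the full subcategory of complexes whose cohomology is generated in internal degrees $\geq i$, and restrict both quotient functors to it. The first step is to check that on such a window the composite to $\D_{\rm Sg}^{\Z}(A)$ is an equivalence while the composite to $\Db(\qgr A)$ is fully faithful; composing one with a quasi-inverse of the other produces a fully faithful functor $\Phi_i\colon \D_{\rm Sg}^{\Z}(A)\to\Db(\qgr A)$, and the distinguished value $i=0$ gives $\Phi:=\Phi_0$. Here the hypothesis $\ell>0$ is indispensable: it guarantees that the degree band separating the torsion-quotient from the perfect-quotient has positive width $\ell$, which is what ultimately yields $\ell$ exceptional objects.

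For the semi-orthogonal decomposition I would verify its two axioms. The vanishing among the twists, $\Ext^{m}_{\qgr A}(\cA(p),\cA(q))=0$ for all $m$ and all $-\ell+1\le q<p\le 0$, follows from the AS-structure of $A$, computed through Lemma~\ref{lem.HE} and Serre duality (Lemma~\ref{lem.Se}). The cross-terms $\Hom_{\Db(\qgr A)}(\Phi(X),\cA(q)[m])=0$ for $-\ell+1\le q\le 0$ are essentially built into the window: the construction forces the maximal Cohen-Macaulay representative of $\Phi(X)$ (via Buchweitz's equivalence $\uCM(A)\cong\D_{\rm Sg}^{\Z}(A)$) to be generated in degrees where the relevant graded $\Ext$-groups into the free modules $A(q)$ vanish, using that maximal Cohen-Macaulay modules $M$ satisfy $\Ext^{i}_{A}(M,A)=0$ for $i\neq 0$ together with Serre duality. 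Generation then follows from the fact that the twists $A(j)$ generate $\Db(\grmod A)$, combined with the staircase below.

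The main obstacle is the full faithfulness of $\Phi$ together with the bookkeeping relating consecutive windows. The cleanest route is to prove that lowering the cutoff from $i$ to $i-1$ enlarges the essential image of $\Phi_i$ by exactly one exceptional summand $\cA(i)$; iterating this $\ell$ times---this is precisely where the Gorenstein parameter fixes the count---peels off the collection $\cA(-\ell+1),\dots,\cA$ and leaves $\Phi\D_{\rm Sg}^{\Z}(A)$ as the orthogonal remainder. Making this staircase rigorous requires careful control of the truncation functors on $\Db(\grmod A)$ and of the graded groups $\Ext^{m}_{A}(A(j),M)$ for maximal Cohen-Macaulay $M$; this degree-by-degree comparison, rather than any single orthogonality computation, is the technical heart of the argument.
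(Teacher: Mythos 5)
First, a point of context: the paper does not prove Theorem~\ref{thm.O} at all --- it is imported verbatim from Orlov \cite{O} (see also Iyama--Yang \cite{IY}), so your attempt can only be compared with Orlov's published argument, not with anything in this paper. Your sketch does reproduce the architecture of that argument: both $\Db(\qgr A)$ and $\D_{\rm Sg}^{\Z}(A)$ are realized as Verdier quotients of $\Db(\grmod A)$ (by the complexes with finite-dimensional total cohomology, resp.\ by $\perf^{\Z} A$), fully faithful functors $\Phi_i$ are produced from grade-restriction windows, and the positivity of the Gorenstein parameter accounts for exactly $\ell$ exceptional objects $\cA(-\ell+1),\dots,\cA$.

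However, your ``first step'' is false as stated, and this is not a cosmetic issue. You define the window $\Db(\grmod A)_{\geq i}$ as the complexes whose cohomology is generated (equivalently, for an $\N$-graded algebra, concentrated) in internal degrees $\geq i$, and you claim that on this window the quotient to $\D_{\rm Sg}^{\Z}(A)$ restricts to an equivalence while the quotient to $\Db(\qgr A)$ restricts to a fully faithful functor. Both claims fail: $A$ itself lies in $\Db(\grmod A)_{\geq 0}$ and is killed by $\upsilon$ (it is perfect), so the first composite is not even faithful; dually, $k$ lies in $\Db(\grmod A)_{\geq 0}$ and is killed by $\pi$, so the second composite is not faithful either. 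In Orlov's proof the windows are strictly smaller subcategories of $\D_{\geq i}$: one must pass to the semi-orthogonal complement of the subcategory $\cP_{\geq i}$ generated by the twisted free modules $A(-j)$, $j\geq i$ (for the singularity-category side), resp.\ of the subcategory generated by the torsion modules concentrated in degrees $\geq i$ (for the $\qgr$ side). It is exactly these orthogonality constraints that remove the counterexamples above, and comparing the two complements via the graded duality $\RHom_A(-,A)$ --- which, by the AS-Gorenstein condition, exchanges the two kinds of generating subcategories up to the internal shift $(\ell)$ --- is what produces the $\ell$ objects $\cA(-\ell+1),\dots,\cA$; without this your ``staircase'' has no rigorous starting point. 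A second, smaller issue: your orthogonality computations invoke \lemref{lem.HE} and Serre duality (\lemref{lem.Se}), but those results assume $\gldim(\qgr A)<\infty$, a hypothesis that is not part of Theorem~\ref{thm.O}; in the general AS-Gorenstein case one computes $\Ext^m_{\qgr A}(\cA,\cA(t))$ from the local cohomology of $A$, which the AS-Gorenstein condition controls directly.
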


The following lemma is useful.

\begin{lem}[{\cite[Proof of Theorem 4.3]{Am}}] \label{lem.Ami}
Let $A$ be a noetherian AS-Gorenstein algebra of positive Gorenstein parameter.
If $M=M_{\geq 0}$ and $\Hom_{\grmod A}(M, A(i))=0$ for all $i\leq 0$, then $\Phi (\upsilon M)\cong  \pi M = \cM$. 
\end{lem}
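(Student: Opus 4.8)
The plan is to realise the last component of the semi-orthogonal decomposition of \thmref{thm.O} as a left orthogonal, reduce the statement to an orthogonality computation for $\cM=\pi M$, and then obtain the identification $\Phi(\upsilon M)\cong\pi M$ from the way Orlov's functor is constructed. Write $d=\dim A$, let $\ell>0$ be the Gorenstein parameter, and set $\mathcal B=\langle\cA(-\ell+1),\dots,\cA\rangle$ and $\mathcal C=\Phi\D_{\rm Sg}^{\Z}(A)$. Since $\Db(\qgr A)=\langle\mathcal B,\mathcal C\rangle$ is a semi-orthogonal decomposition, $\mathcal C$ is exactly the left orthogonal ${}^{\perp}\mathcal B$, so every object $\cN$ fits in a canonical triangle $\cN_{\mathcal C}\to\cN\to\cN_{\mathcal B}$ with $\cN_{\mathcal C}\in\mathcal C$ and $\cN_{\mathcal B}\in\mathcal B$. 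Orlov's functor \cite{O} is built so that, for a module concentrated in non-negative degrees, $\Phi(\upsilon M)$ is identified with the projection $(\pi M)_{\mathcal C}$. Granting this, it suffices to prove that $\pi M$ already lies in $\mathcal C$: then $(\pi M)_{\mathcal B}=0$, the triangle collapses, and $\Phi(\upsilon M)\cong(\pi M)_{\mathcal C}=\pi M=\cM$.

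The concrete core is thus to verify, in the situation relevant to our applications (where $\gldim(\qgr A)<\infty$ and $M\in\CM(A)$, which is the case needed in this paper), that
\[ \Ext^{j}_{\qgr A}(\cM,\cA(i))=0\qquad\text{for all }-\ell+1\le i\le 0\text{ and all }j\in\Z. \]
I would run through the range of $j$ using \lemref{lem.HE} and \lemref{lem.Se}. The cases $j<0$ hold because $\cM$ and $\cA(i)$ lie in the heart $\qgr A$, and the cases $j\ge d$ vanish since $\gldim(\qgr A)=d-1$. For $j=0$ the comparison $\Hom_{\qgr A}(\cM,\cA(i))\cong\Hom_{\grmod A}(M,A(i))$ (valid because $\depth A(i)=d$, as in the proof of \lemref{lem.HE}(1)) turns the required vanishing into the hypothesis $\Hom_{\grmod A}(M,A(i))=0$ for $i\le 0$. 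For $1\le j\le d-2$, \lemref{lem.HE}(1) gives $\Ext^{j}_{\qgr A}(\cM,\cA(i))\cong\Ext^{j}_A(M,A(i))$, which is $0$ because $M$ is maximal Cohen-Macaulay. Finally, for the top degree $j=d-1$, \lemref{lem.HE}(2) together with the freeness of $A(i)$ yields $\Ext^{d-1}_{\qgr A}(\cM,\cA(i))\cong D(M_{-i-\ell})$, and this vanishes since $-i-\ell<0$ throughout the window and $M=M_{\ge 0}$. In this way the two hypotheses are consumed precisely at the two ends $j=0$ and $j=d-1$ of the range.

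The step I expect to be the main obstacle is not this computation but the identification of $\Phi(\upsilon M)$ with the projection $(\pi M)_{\mathcal C}$, which is the genuinely construction-dependent input. Here one must unwind Orlov's definition of $\Phi=\Phi_0$ as the composite of $\pi$ with the inverse of the equivalence that $\upsilon$ induces on a graded window of $\Db(\grmod A)$, and check that for $M=M_{\ge 0}$ the object $\pi M$ represents $\Phi(\upsilon M)$ up to its $\mathcal B$-component. The entire subtlety lies in the grading bookkeeping: it is the hypothesis $M=M_{\ge 0}$ that places $M$ in the correct window so that the comparison with $\upsilon M$ is legitimate, and the same positivity is what annihilates the top extension group above. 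Once this identification is secured, the collapse of the projection triangle completes the argument.
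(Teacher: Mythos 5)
The first thing to say is that the paper contains no proof of this lemma to compare against: it is imported wholesale, with its proof, from Amiot's paper \cite{Am}. So the only meaningful benchmark is Orlov's construction itself, and against that benchmark your argument has a genuine gap, located exactly at the step you flagged as the ``main obstacle'' and then deferred. The bridging claim you rely on --- that $\Phi(\upsilon M)\cong(\pi M)_{\mathcal C}$ for every $M$ with $M=M_{\geq 0}$ --- is not merely unproven but false. Take $A=k[x,y,z]/(x^2+y^2+z^2)$, so $\ell=1$, $\qgr A\simeq\coh\P^1$ with $\cA\mapsto\cO_{\P^1}$, $\mathcal B=\langle\cO\rangle$, $\mathcal C=\langle\cX_1\rangle=\langle\cO(1)\rangle$, and take $M=A(-1)$. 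Then $M=M_{\geq 0}$ and $\Phi(\upsilon M)=0$ because $M$ is free, whereas $(\pi M)_{\mathcal C}=\RHom(\cO(1),\cO(-2))\otimes_k\cO(1)\cong\cO(1)^{\oplus 2}[-1]\neq 0$. The obstruction is structural rather than a matter of bookkeeping: $\pi$ kills torsion but not free modules, while $\upsilon$ kills free modules but not torsion, so $\pi M$ simply does not determine $\upsilon M$, and no implication of the form ``data of $\pi M$ downstairs $\Rightarrow$ identification of $\Phi(\upsilon M)$'' can be correct. The module $M=k$ makes this vivid: it satisfies both stated hypotheses (positive Gorenstein parameter forces $d\geq 1$, hence $\depth A=d\geq1$ and $\Hom_{\grmod A}(k,A(i))=0$ for all $i$), it passes your downstairs orthogonality test vacuously since $\pi k=0\in\mathcal C$, and yet $\Phi(\upsilon k)\neq 0=\pi k$ whenever $\gldim A=\infty$, because $\Phi$ is fully faithful and $k\notin\perf^{\Z}A$. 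This also shows that the lemma is really about modules having no maps \emph{and no higher extensions} into the $A(-j)$, $j\geq 0$: your instinct to assume $M\in\CM(A)$ was the right one, but Cohen--Macaulayness has to be spent upstairs, not inside the $\qgr$-computation, where it cannot do the necessary work.

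The correct route --- which is Amiot's, and which makes the entire $\qgr$-side computation unnecessary --- runs upstairs in $\Db(\grmod A)$. Orlov's functor is by construction $\Phi=\pi\circ(\upsilon|_{\mathcal C_0})^{-1}$, where $\mathcal C_0$ is the left orthogonal, taken inside the subcategory of $\Db(\grmod A)$ of complexes with cohomology in degrees $\geq 0$, of the triangulated subcategory generated by the free modules $A(-j)$ with $j\geq 0$; the point of this definition is that on $\mathcal C_0$ the composite $\Phi\circ\upsilon$ is \emph{literally} $\pi$. Hence the entire content of the lemma is the verification that $M\in\mathcal C_0$, i.e.\ that $\Ext^m_{\grmod A}(M,A(-j))=0$ for all $m$ and all $j\geq 0$. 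The hypothesis $\Hom_{\grmod A}(M,A(i))=0$ for $i\leq 0$ is precisely the case $m=0$, and $M\in\CM(A)$ kills all $m\geq 1$ because $\Ext^m_A(M,A)=0$; then $\Phi(\upsilon M)\cong\pi M$ with no projection triangle, no Serre duality, and no ampleness input. Your degree-by-degree verification that $\Ext^j_{\qgr A}(\cM,\cA(i))=0$ for $-\ell+1\leq i\leq 0$ is correct as far as it goes (and it is how the paper uses the lemma later, in \lemref{lem.Amx} and \lemref{lem.2a}), but it establishes orthogonality in the wrong category: that vanishing is a \emph{consequence} of $\Phi(\upsilon M)\cong\cM$ combined with the semi-orthogonal decomposition of \thmref{thm.O}, not an input from which the isomorphism can be recovered.
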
 

\subsection{Noncommutative Quadric Hypersurfaces}

Let $S$ be a $d$-dimensional noetherian AS-regular algebra with Hilbert series $H_S(t)= (1-t)^{-d}$.
Then $S$ is Koszul by \cite[Theorem 5.11]{S}.
Let $f \in S_2$ be a central regular element and $A=S/(f)$.
Then $A$ is a noetherian AS-Gorenstein Koszul algebra of dimension $d-1$ and Gorenstein parameter $d-2$.
There exists a central regular element $w \in A^!$ of degree $2$ such that $A^!/(w) \cong S^!$ where $A^!, S^!$ are Koszul duals of $A, S$.
Following \cite{SV}, we define 
\[C(A) := A^![w^{-1}]_0.\]
By \cite[Lemma 5.1]{SV}, $\dim_k C(A)= 2^{d-1}$.
Moreover, by \cite[Theorem 3.2]{SV}, there exists a duality
\[ \mathfrak{G}: \uCM(A) \to \Db(\mod C(A)^{\op}); \quad M \mapsto T(\RHom_A(M,k))[w^{-1}]_0\]
where $T(\RHom_A(M,k))^i_j:=\Ext^{i+j}_A(M,k)_{-j}$.
Under this duality, indecomposable non-projective graded maximal Cohen-Macaulay $A$-modules generated in degree 0 correspond to simple $C(A)$-modules
(see \cite[Lemma 4.13 (5)]{MUk}).

\section{Skew Quadric Hypersurfaces}

Throughout this section, we freely use Notation \ref{nota}. 

\subsection{Graded Maximal Cohen-Macaulay Modules}

In this subsection, we first calculate a description of $C(A_\e)$
by combinatorial methods developed in \cite{MUk}, \cite{HU}.
Next, using this, we study the form of the minimal free resolution of
an indecomposable maximal Cohen-Macaulay module over $A_\e$.
At last, we compute the extension groups between indecomposable maximal Cohen-Macaulay modules over $A_\e$.

Let $G=(V(G), E(G))$ be a finite simple graph
where $V(G)$ is the set of vertices and $E(G)$ is the set of edges. Let $v \in V(G)$ be a vertex.
We denote by $G \setminus \{v\}$ the induced subgraph of $G$ induced by $V(G) \setminus \{v\}$. Moreover, we define $N_G(v) :=\{u \in V(G) \mid uv \in E(G)\}$. 
Since reordering the vertices of the graph $G_\e$ is nothing more than reordering the variables of $A_\e$, we will frequently consider isomorphic graphs to be the same graph.

\begin{dfn}
Let $G$ be a finite simple graph.
\begin{enumerate}
\item (\cite[Definition 6.3]{MUk})
Let $v\in V(G)$. 
Then the \emph{mutation} of $G$ at $v$ is the graph $G'$ defined by $V(G')=V(G)$ and 
\[E(G')=\{ vw \mid w \in V(G) \setminus N_G(v) \} \cup E(G \setminus \{v\}).\]
\item (\cite[Definition 6.6, Lemma 6.7]{MUk})
Suppose that $G$ has an isolated vertex $i$. Let $u,v \in V(G)$ be vertices not equal to $i$.
Then the \emph{relative mutation} of $G$ at $v$ with respect to $u$ is the graph $G'$ defined by
$V(G')=V(G)$ and
\begin{align*}
E(G') =\{vw \mid w \in N_G(u) \setminus N_G(v)\} \cup \{vw \mid w \in N_G(v) \setminus N_G(u)\} \cup E(G \setminus \{v\}).
\end{align*}
\item (\cite[Lemma 6.18]{MUk})
Suppose that $G$ has two distinct isolated vertices.
We say that a graph $G'$ is a \emph{two points reduction} of $G$ if
$G'=G \setminus \{v\}$ where $v$ is an isolated vertex of $G$.
\end{enumerate}
\end{dfn}

\begin{ex}
(1) If $G=  \xy /r2pc/: 
{\xypolygon4{~={90}~*{\xypolynode}~>{}}},
"1";"2"**@{-},
"2";"3"**@{-}
\endxy$, then the mutation of $G$ at $3$ is 
$\xy /r2pc/: 
{\xypolygon4{~={90}~*{\xypolynode}~>{}}},
"1";"2"**@{-},
"1";"3"**@{-},
"3";"4"**@{-}
\endxy$.

(2) If $G= \xy /r2pc/: 
{\xypolygon6{~={90}~*{\xypolynode}~>{}}},
"1";"2"**@{-},
"1";"5"**@{-},
"1";"4"**@{-},
"2";"4"**@{-},
"3";"5"**@{-},
\endxy$, then the relative mutation of $G$ at $4$ with respect to $5$ is 
$\xy /r2pc/: 
{\xypolygon6{~={90}~*{\xypolynode}~>{}}},
"1";"2"**@{-},
"1";"5"**@{-},
"2";"4"**@{-},
"3";"5"**@{-},
"3";"4"**@{-},
\endxy$.

(3) If $G = \xy /r2pc/: 
{\xypolygon6{~={90}~*{\xypolynode}~>{}}},
"1";"2"**@{-},
"1";"3"**@{-},
"2";"3"**@{-},
"1";"4"**@{-},
\endxy$, then 
$G'=G \setminus \{6 \} = \xy /r2pc/: 
{\xypolygon5{~={72}~*{\xypolynode}~>{}}},
"1";"2"**@{-},
"1";"3"**@{-},
"2";"3"**@{-},
"1";"4"**@{-},
\endxy$
is a two point reduction of $G$.
\end{ex}

\begin{lem}\label{lem.mrm}
 Let $r=\mnull_{\F_2} \De_{\e}$. Then $G_\e$ can be transformed into
\[ \xymatrix@R=0.4pc @C=1pc{
 &1\ar@{-}[dd] &3\ar@{-}[dd] &&n-r-2\ar@{-}[dd]\\
 &  &  &\cdots &&n-r &n-r+1 &\cdots &n \\
 &2 &4 &&n-r-1
}
\]
by applying mutation and relative mutation several times up to isomorphism.
\end{lem}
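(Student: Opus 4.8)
The plan is to translate both graph operations into congruences of $\De_\e$ over $\F_2$ and then to reduce $\De_\e$ to normal form by an explicit symplectic elimination. Write $\De_\e=\left(\begin{smallmatrix} M & \mathbf{1}\\ \mathbf{1}^{T} & 0\end{smallmatrix}\right)$, where $M=M(G_\e)$ and $\mathbf{1}$ is the all-ones column of length $n$, and let $E_{a,b}$ denote the matrix unit with a single $1$ in position $(a,b)$. A direct check shows that mutation at $v$ is the congruence $\De_\e\mapsto P\De_\e P^{T}$ with $P=I+E_{v,n+1}$: this complements the neighbourhood of $v$ in $M$ and leaves the all-ones border fixed. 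Relative mutation at $v$ with respect to $u$, which is legal exactly when some isolated vertex $i$ exists, is the congruence by $P=I+E_{v,u}+E_{v,i}$; here the border entry of $v$ is flipped by the $u$-term and flipped back by the $i$-term (the isolated vertex contributes a $1$ in the border and nothing in $M$), so the net effect is to add row and column $u$ to row and column $v$ inside $M$ while again preserving the border. As both $P$ are invertible, $\mnull_{\F_2}\De_\e$ is unchanged by either operation.

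Next I would create an isolated vertex using mutations alone. The congruences $I+E_{v,n+1}$ for distinct $v$ commute, and mutating all vertices of a set $T$ toggles precisely those edges with exactly one endpoint in $T$, that is, the cut between $T$ and its complement. Choosing $T=N_{G_\e}(v)$ therefore toggles off exactly the edges incident to $v$, so after these mutations $v$ is isolated. (If $G_\e$ has no edges it is already in normal form.)

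With an isolated vertex $v_0$ available, relative mutation realizes, for every pair of vertices $u\neq v$ different from $v_0$, the symmetric operation that adds row and column $u$ to row and column $v$ of the alternating matrix $M$. These are exactly the congruences needed for symplectic Gram--Schmidt over $\F_2$: repeatedly choose an edge $ab$, use relative mutations with pivots $a$ and $b$ to clear every other vertex's adjacencies to $a$ and $b$, split off $\{a,b\}$ as an isolated edge, and recurse on the remaining vertices. Since $v_0$ has zero row it is never a pivot and never needs clearing, so it stays isolated and remains a legal witness at every step. The outcome is a disjoint union of edges and isolated vertices.

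It remains to count. For a disjoint union of $m$ edges and $s\geq 1$ isolated vertices, solving the homogeneous system shows that the nullity of the associated matrix equals $s-1$. The bootstrap guarantees $s\geq 1$, and the nullity is preserved throughout, so $s-1=r=\mnull_{\F_2}\De_\e$; hence $s=r+1$ and $m=(n-r-1)/2$, which is the stated normal form up to reordering the vertices. The one point demanding care is the congruence dictionary for relative mutation: one must verify that the isolated-vertex hypothesis is precisely what restores the border of $\De_\e$, keeping every intermediate matrix of the form $\De_{\e'}$ so that the nullity bookkeeping is legitimate; granting this, both the bootstrap and the elimination are routine.
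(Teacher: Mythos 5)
Your proof is correct, and it takes a more self-contained route than the paper's. The paper disposes of this lemma essentially by citation: Lemmas 3.1--3.2 of [HU] supply the transformation of $G_\e$ into a disjoint union of edges and isolated vertices with an a priori unknown number of isolated vertices, and Lemma 3.3 of [HU] supplies the $\F_2$-rank computation that identifies that number as $r+1$. Your argument has the same logical skeleton (normalize first, then pin down the unknown parameter by invariance of the $\F_2$-nullity), but you re-derive all the ingredients instead of citing them: the congruence dictionary showing that mutation acts on $\De_\e$ as congruence by $I+E_{v,n+1}$ and relative mutation as congruence by $I+E_{v,u}+E_{v,i}$ (hence both preserve nullity), the bootstrap isolating a vertex by mutating across its whole neighborhood, the symplectic elimination, and the count that a normal form with $s\geq 1$ isolated vertices has nullity $s-1$. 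I checked the dictionary and it is exactly right, including the role of the isolated vertex in restoring the border entry, and the diagonal of the adjacency block staying zero so that every intermediate matrix is again some $\De_{\e'}$; your insistence on the bootstrap is also genuinely needed, since a perfect-matching normal form with $s=0$ has nullity $1$ rather than $-1$, so without it the counting step would break. What the paper's route buys is brevity and reuse of machinery already developed in [HU]; what yours buys is a proof readable without that reference, plus an explicit reduction algorithm. One point to make explicit in a final write-up: after the bootstrap, the elimination must use relative mutations only, with pivots among the not-yet-split-off vertices --- a plain mutation at a remaining vertex would toggle its edges to the already split-off pairs and to $v_0$, destroying the normal form built so far. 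Your sketch respects this constraint, but it is the one place where the recursion could silently go wrong, so it deserves a sentence.
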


\begin{proof}
By combining \cite[Lemma 3.1]{HU} and \cite[Lemma 3.2]{HU}, we see that $G_\e$ can be transformed into
\[ \xymatrix@R=0.4pc @C=1pc{
 &1\ar@{-}[dd] &3\ar@{-}[dd] &&n-s-2\ar@{-}[dd]\\
 &  &  &\cdots &&n-s &n-s+1 &\cdots &n \\
 &2 &4 &&n-s-1
}
\]
for some $s\geq 0$, by applying mutation and relative mutation several times up to isomorphism.
Then it follows from \cite[Lemma 3.3]{HU} that
$\mnull_{\F_2} \De_{\e}= n+1-\rank_{\F_2} \De_{\e}= n+1-(n-s+1)=s$, so we get $r=s$.
\end{proof}

\begin{lem}\label{lem.cg}
Let $G_{\e}, G_{\e'}$ be the graphs associated with $\e, \e'$. 
\begin{enumerate}
\item {\rm (\cite[Lemma 6.5]{MUk})} If $G_{\e'}$ is obtained from $G_{\e}$ by a mutation, then $C(A_{\e})\cong C(A_{\e'})$. 
\item {\rm (\cite[Lemma 6.7]{MUk})} If $G_{\e'}$ is obtained from $G_{\e}$ by a relative mutation, then $C(A_{\e})\cong C(A_{\e'})$.
\item {\rm (\cite[Lemma 6.18]{MUk})} If $G_{\e'}$ is obtained from $G_{\e}$ by a two points reduction, then $C(A_{\e})\cong C(A_{\e'})^{2}$. 
\end{enumerate}
\end{lem}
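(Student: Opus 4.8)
The plan is to work from the explicit description of the finite-dimensional algebra $C(A_\e)=A_\e^{!}[w^{-1}]_0$ coming from the Koszul dual. Since $S_\e$ is the $(\pm 1)$-skew polynomial algebra, its Koszul dual $S_\e^{!}$ is the corresponding skew exterior algebra, and $A_\e^{!}$ is the Clifford-type deformation of $S_\e^{!}$ in which each square $y_i^2$ becomes a nonzero central element and $w=\sum_i y_i^2$ is the central regular element with $A_\e^{!}/(w)\cong S_\e^{!}$. Inverting $w$ and passing to degree $0$ presents $C(A_\e)$ as a generalized (graph) Clifford algebra whose structure constants are read off from $G_\e$: the commutation sign between the generators attached to $i$ and $j$ is governed by whether $ij\in E(G_\e)$, and the relevant squares are nonzero scalars (here one uses that $\fchar k\neq 2$ and that $k$ is algebraically closed, so such scalars admit square roots). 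With this presentation in hand, each of the three graph operations should be matched with an explicit algebraic operation on $C(A_\e)$.

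For the mutation (1) and the relative mutation (2), the plan is to produce an explicit $k$-algebra isomorphism $C(A_\e)\xrightarrow{\sim} C(A_{\e'})$ by a change of generators realizing the prescribed change of adjacencies. Mutation at $v$ complements the neighborhood of $v$ while leaving all edges away from $v$ untouched; this is exactly the congruence-type substitution one can perform on the Clifford generators without altering the isomorphism class of the algebra, since over an algebraically closed field with $\fchar k \neq 2$ the isomorphism type of such a generalized Clifford algebra depends only on the congruence class of its defining $\F_2$-data. The relative mutation is handled the same way, the extra ingredient being that the hypothesised isolated vertex $i$ supplies a generator that is central in $A_\e^{!}$ (an isolated vertex makes $y_i$ commute with every $y_j$); this central generator is precisely what makes the substitution ``relative to $u$'' well defined and relation-preserving. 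In both cases the verification reduces to checking that the transformed generators satisfy the relations dictated by $G_{\e'}$.

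The two points reduction (3) is where the factor of two appears, and it is the step I expect to require the most care. The hypothesis that $G_\e$ has two distinct isolated vertices $i,j$ means that both $y_i$ and $y_j$ are central in $A_\e^{!}$; hence $\theta:=\lambda\, y_iy_j w^{-1}$, for a suitable nonzero scalar $\lambda$, is a central element of degree $0$, i.e.\ a central element of $C(A_\e)$, and $\theta^2=\lambda^2 y_i^2y_j^2w^{-2}$ is a nonzero scalar. Normalising so that $\theta^2=1$ and using $\fchar k\neq 2$, the elements $\tfrac12(1\pm\theta)$ are orthogonal central idempotents $e_{\pm}$, giving $C(A_\e)\cong C(A_\e)e_{+}\times C(A_\e)e_{-}$. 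The substance of the argument is then to identify each block with $C(A_{\e'})$, where $\e'$ corresponds to deleting the isolated vertex $j$: on each block $\theta$ acts as $\pm 1$, which eliminates the degree-$0$ generators built from $y_j$ and identifies the remaining data with the Clifford algebra attached to $G_\e\setminus\{j\}=G_{\e'}$. This yields $C(A_\e)\cong C(A_{\e'})^{2}$, and it also explains why two isolated vertices rather than one are needed: a single isolated vertex provides only an odd-degree central element $y_i$, from which no degree-$0$ central square root can be manufactured.

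The main obstacle throughout is obtaining, and correctly normalising, the generators-and-relations presentation of $C(A_\e)$ from the localisation $A_\e^{!}[w^{-1}]_0$ --- in particular tracking the commutation signs through Koszul duality --- and then verifying in (1) and (2) that the proposed change of generators respects every relation, and in (3) that the two idempotent blocks are genuinely isomorphic to $C(A_{\e'})$ rather than merely of the same dimension $\tfrac12\dim_k C(A_\e)=2^{n-2}$. Since all three statements are established in \cite{MUk} by exactly this kind of combinatorial bookkeeping, I would either invoke those results directly or reproduce the explicit isomorphisms along the lines above.
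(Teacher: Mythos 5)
The paper gives no independent proof of this lemma: it is stated purely as a citation of \cite[Lemmas 6.5, 6.7 and 6.18]{MUk}, so your fallback of invoking those results directly is exactly the paper's approach. Your sketch of the underlying mechanism --- the graph-Clifford presentation of $C(A_\e)=A_\e^![w^{-1}]_0$, change of generators for mutation and relative mutation, and the splitting by the central idempotents $\tfrac{1}{2}(1\pm\theta)$ with $\theta=\la y_iy_jw^{-1}$ for the two-points reduction --- is consistent with how those cited lemmas are proved, modulo the harmless normalization that $w$ is the common central value $y_i^2$ in $A_\e^!$ rather than $\sum_i y_i^2$ (which could even vanish if $\fchar k \mid n$).
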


\begin{lem}\label{lem.ck}
For $m \in \N$, if
\[ \xymatrix@R=0.4pc{
 &1\ar@{-}[dd] &3\ar@{-}[dd] &&2m-1\ar@{-}[dd]\\
G_\e= &  &  &\cdots &&2m+1, \\
 &2 &4 &&2m
}
\]
then $C(A_\e) \cong M_{2^m}(k)$.
\end{lem}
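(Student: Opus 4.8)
The plan is to compute $C(A_\e)$ directly as an even Clifford algebra and then read off its structure from the block pattern of $G_\e$. Here $n=2m+1$, so by \cite[Lemma 5.1]{SV} we have $\dim_k C(A_\e)=2^{n-1}=2^{2m}=\dim_k M_{2^m}(k)$; since $M_{2^m}(k)$ is simple, it will be enough to produce a surjective $k$-algebra homomorphism $M_{2^m}(k)\to C(A_\e)$. Note that the graphical reductions of \lemref{lem.cg} do not apply here, since $G_\e$ has only one isolated vertex and hence admits no two points reduction, so a hands-on computation seems unavoidable.

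First I would make $A_\e^!$ explicit. The Koszul dual $S_\e^!$ is the skew exterior algebra on $y_1,\dots,y_n$ with $y_i^2=0$ and $y_iy_j=-\e_{ij}y_jy_i$ for $i\neq j$. Passing to the central quotient $A_\e=S_\e/(f_\e)$ dualizes to relaxing the $n$ relations $y_i^2=0$ into the single requirement $y_1^2=\cdots=y_n^2$, whose common value $w$ is central of degree $2$ and satisfies $A_\e^!/(w)\cong S_\e^!$. Thus $A_\e^!$ is the generalized Clifford algebra with $y_iy_j=-\e_{ij}y_jy_i$ and $y_i^2=w$, and $C(A_\e)=A_\e^![w^{-1}]_0$ is its even part.

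Next, using the isolated vertex $n$ as a base point, I would pass to the generators $t_i:=y_iy_nw^{-1}$ for $1\leq i\leq n-1$. Because $n$ is isolated we have $\e_{in}=-1$ for every $i<n$, and a direct computation gives $t_i^2=-\e_{in}=1$ and (using $\e_{in}=\e_{jn}=-1$) $t_it_j=-\e_{ij}t_jt_i$ for $i\neq j$. The sign $-\e_{ij}$ now encodes the graph: inside an edge $\{2j-1,2j\}$ one has $\e=1$, so $t_{2j-1}$ and $t_{2j}$ anticommute and square to $1$, generating the rank-two Clifford algebra, which is isomorphic to $M_2(k)$ (recall $k$ is algebraically closed with $\fchar k\neq 2$); across two distinct edges $\e=-1$, so the corresponding generators commute. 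Hence $C(A_\e)$ is generated by $m$ mutually commuting copies of $M_2(k)$, giving an algebra homomorphism $M_{2^m}(k)\cong M_2(k)^{\otimes m}\to C(A_\e)$ whose image is all of $C(A_\e)$.

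Finally, simplicity of $M_{2^m}(k)$ forces this surjection to be injective, and the dimension count from the first step makes it an isomorphism. The step I expect to require the most care is the change of generators: one must verify that the $t_i$ actually generate the degree-$0$ localization $A_\e^![w^{-1}]_0$ — this uses that $y_n$ is invertible, since $y_n^{-1}=y_nw^{-1}$ — and keep the anticommutation signs consistent when rewriting an arbitrary even word $y_{i_1}\cdots y_{i_{2p}}w^{-p}$ in terms of the $t_i$. Once the relations $t_i^2=1$ and $t_it_j=-\e_{ij}t_jt_i$ are in hand, the tensor factorization along the edges of $G_\e$ and the final dimension comparison are routine.
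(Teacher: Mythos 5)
Your proof is correct, and it reaches the same Clifford-type generators and relations as the paper but assembles them into the conclusion by a different route. The paper does not compute inside $A_\e^![w^{-1}]_0$ by hand: it invokes \cite[Lemma 3.1(3)]{Uk}, which supplies the full presentation $C(A_\e) \cong k\ang{t_1,\dots,t_{2m}}/(t_it_j+\e_{2m+1,i}\e_{ij}\e_{j,2m+1}t_jt_i,\ t_i^2-1)$, simplifies the signs using that the vertex $2m+1$ is isolated, and then argues by induction on $m$: in the presented algebra $\Ga_m$ the generators $t_{2m-1},t_{2m}$ commute with the others, giving a tensor factorization $\Ga_m \cong \Ga_{m-1}\otimes_k k\ang{t_{2m-1},t_{2m}}/(\text{anticommutation},\ t^2-1) \cong M_{2^{m-1}}(k)\otimes_k M_2(k)\cong M_{2^m}(k)$. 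Your construction of $t_i = y_iy_nw^{-1}$ and the verification of $t_i^2=1$, $t_it_j=-\e_{ij}t_jt_i$ is in effect a hands-on re-derivation of the easy half of the cited lemma; where you genuinely diverge is the endgame. Instead of needing the presentation to be \emph{complete} (no hidden relations --- the harder content of \cite[Lemma 3.1(3)]{Uk}) and inducting, you need only that the relations hold and that the $t_i$ generate $C(A_\e)$, and you then conclude from simplicity of $M_{2^m}(k)$ together with the dimension formula $\dim_k C(A_\e)=2^{n-1}$ of \cite[Lemma 5.1]{SV}. This trade is sound: the paper's version is shorter given the citation and keeps the argument purely at the level of presented algebras, while yours is more self-contained (it never uses that the listed relations are defining) at the cost of the localization bookkeeping you flag --- which does work out, since $w$ is central and every even word $y_{i_1}\cdots y_{i_{2p}}w^{-p}$ regroups into pairs $y_iy_jw^{-1}=\pm t_it_j$ --- and of importing the dimension count. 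Your side remark is also consistent with the paper: Lemma~\ref{lem.cg} is indeed not used in this step, but only later in Lemma~\ref{lem.c}.
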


\begin{proof}
Since $\ep_{2m+1,i}=\ep_{i,2m+1}=-1$ for any $1 \leq i\leq 2m$, it follows from \cite[lemma 3.1(3)]{Uk} that 
\begin{align*}
C(A_\e) &\cong k\ang{t_1,\dots,t_{2m}}/(t_it_j+\e_{2m+1,i}\e_{ij}\e_{j,2m+1}t_jt_i, t_i^2-1)_{1 \leq i,j\leq 2m, i\neq j}\\
&\cong k\ang{t_1,\dots,t_{2m}}/(t_it_j+\e_{ij}t_jt_i, t_i^2-1)_{1 \leq i,j\leq 2m, i\neq j},
\end{align*}
so it is enough to show
\begin{align} \label{eq.c}
\Ga_{m} := k\ang{t_1,\dots,t_{2m}}/(t_it_j+\e_{ij}t_jt_i, t_i^2-1)_{1 \leq i,j\leq 2m, i\neq j} \cong M_{2^m}(k).
\end{align}
We use induction on $m$. The case $m=0$ is clear. Suppose that $m \geq 1$ and $\La_{m-1} \cong M_{2^{m-1}}(k)$.
Since $\ep_{2m-1,i}=\ep_{2m,i}=-1$ for any $1 \leq i \leq 2m-2$, 
we see that $t_{2m-1}$ and $t_{2m}$ commute with $t_1,\dots,t_{2m-2}$ in $\Ga_{m}$, so we have
\begin{align*}
\Ga_{m} &\cong k\ang{t_1,\dots,t_{2m-2}}/(t_it_j+\e_{ij}t_jt_i, t_i^2-1)_{1 \leq i,j\leq 2m-2, i\neq j} \\
&\hspace{10mm} \otimes_k k\ang{t_{2m-1},t_{2m}}/(t_{2m-1}t_{2m}+\e_{2m-1,2m}t_{2m}t_{2m-1}, t_{2m-1}^2-1, t_{2m}^2-1)\\
&\cong \Ga_{m-1} \otimes_k k\ang{t_{2m-1},t_{2m}}/(t_{2m-1}t_{2m}+t_{2m}t_{2m-1}, t_{2m-1}^2-1, t_{2m}^2-1)\\
&\cong M_{2^{m-1}}(k) \otimes_k M_{2}(k)\\
&\cong M_{2^{m}}(k)
\end{align*}
as desired. 
\end{proof}

\begin{lem} \label{lem.c} 
Let $r = \mnull_{\F_2} \De_{\e}, \al =2^r, \be = 2^{\frac{n-r-1}{2}}$.
Then $C(A_\e) \cong M_{\be}(k)^{\al}$.
\end{lem}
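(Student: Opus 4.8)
The plan is to combine the graph-normal-form result of \lemref{lem.mrm} with the invariance properties of $C(A_\e)$ under the three graphical operations (\lemref{lem.cg}) and the explicit matrix computation of \lemref{lem.ck}. First I would apply \lemref{lem.mrm} to transform $G_\e$, via a sequence of mutations and relative mutations, into the normal form
\[
\xymatrix@R=0.4pc @C=1pc{
 &1\ar@{-}[dd] &3\ar@{-}[dd] &&n-r-2\ar@{-}[dd]\\
 &  &  &\cdots &&n-r &n-r+1 &\cdots &n \\
 &2 &4 &&n-r-1
}
\]
By parts (1) and (2) of \lemref{lem.cg}, each mutation and each relative mutation preserves $C(A_\e)$ up to isomorphism, so $C(A_\e) \cong C(A_{\e'})$ where $\e'$ is the coefficient matrix corresponding to this normal-form graph. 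Thus it suffices to compute $C(A_{\e'})$ for the normal form.

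Next I would observe that the normal form consists of a matching on the first $n-r-1$ vertices (giving $\frac{n-r-1}{2}$ disjoint edges, so $n-r-1$ must be even here) together with $r$ isolated vertices $n-r+1,\dots,n$. To strip off the isolated vertices, I would apply part (3) of \lemref{lem.cg} repeatedly: each two points reduction removes one isolated vertex and squares the dimension, i.e.\ replaces $C(A_{\e'})$ by $C(\cdot)^2$. Since there are $r$ isolated vertices, after removing $r-1$ of them (two points reduction requires \emph{two} isolated vertices to be present, but one can peel them off one at a time as long as at least two remain, and the normal form with $r$ isolated vertices and the matched part retains the needed isolated vertices throughout), we reduce to the graph $G_{\e''}$ consisting of the matching on $n-r-1$ vertices plus a single isolated vertex $2m+1$ with $2m = n-r-1$. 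This is exactly the graph in \lemref{lem.ck} with $m = \frac{n-r-1}{2}$, so $C(A_{\e''}) \cong M_{2^m}(k) = M_\be(k)$. Unwinding the $r$ applications of the two points reduction gives
\[
C(A_\e) \cong C(A_{\e''})^{\,2^r} \cong M_\be(k)^{\al},
\]
since $\al = 2^r$ and each reduction doubles the number of matrix-algebra factors.

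The main obstacle I anticipate is the bookkeeping around the two points reduction in \lemref{lem.cg}(3): one must verify that the normal form genuinely contains the isolated vertices required to apply the reduction at each stage, and that peeling off $r$ isolated vertices yields exactly $2^r = \al$ copies (rather than some other count) of the base algebra $M_\be(k)$. Concretely, the base case after all reductions is the single-isolated-vertex graph of \lemref{lem.ck}, and the isolated vertex $2m+1$ there is essential to that computation, so I would need to retain one isolated vertex for \lemref{lem.ck} while reducing away the other $r$; a clean way to phrase this is to note that the normal form has $r$ isolated vertices \emph{plus} the distinguished apex vertex $n-r$ playing the role of $2m+1$, and that only the $r$ vertices $n-r+1,\dots,n$ are removed by reduction. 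Checking that $n-r-1$ is even (equivalently that $\be = 2^{(n-r-1)/2}$ is a genuine power of $2$) follows from the fact that the matched part of the normal form in \lemref{lem.mrm} pairs up vertices $1,\dots,n-r-1$, forcing that count to be even; I would make this parity remark explicit to justify that $\be$ is well defined.
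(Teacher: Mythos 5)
Your proposal is correct and follows essentially the same route as the paper: normal form via \lemref{lem.mrm}, invariance of $C(A_\e)$ under mutation and relative mutation via \lemref{lem.cg}(1),(2), then $r$ two points reductions via \lemref{lem.cg}(3) to peel off isolated vertices, and finally \lemref{lem.ck} to identify the base algebra $M_\be(k)$, giving $C(A_\e)\cong M_\be(k)^{2^r}$. Note that your opening count ($r$ isolated vertices, hence $r-1$ reductions) is off by one, but you fix this yourself in the last paragraph --- the normal form has $r+1$ isolated vertices (the apex $n-r$ plus $n-r+1,\dots,n$), of which exactly $r$ are removed, retaining the apex as the vertex $2m+1$ of \lemref{lem.ck} --- which is precisely the paper's accounting.
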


\begin{proof}
By \lemref{lem.mrm}, $G_\e$ can be transformed into
\[ \xymatrix@R=0.4pc @C=1pc{
 &1\ar@{-}[dd] &3\ar@{-}[dd] &&n-r-2\ar@{-}[dd]\\
G_{\e'} := &  &  &\cdots &&n-r &n-r+1 &\cdots &n \\
 &2 &4 &&n-r-1
}
\]
by applying mutation and relative mutation several times up to isomorphism. Notice that $G_{\e'}$ consists of $\frac{n-r-1}{2}$ isolated edges and $r+1$ isolated vertices.
Using two points reductions $r$ times, $G_{\e'}$ becomes
\[ \xymatrix@R=0.4pc{
 &1\ar@{-}[dd] &3\ar@{-}[dd] &&n-r-2\ar@{-}[dd]\\
G_{\e''} := &  &  &\cdots &&n-r. \\
 &2 &4 &&n-r-1
}
\]
By \lemref{lem.cg}, $C(A_\e) \cong C(A_{\e'}) \cong C(A_{\e''})^{\al}$.
Moreover, by \lemref{lem.ck}, $C(A_{\e''}) \cong M_\be (k)$.
Hence we have $C(A_\e) \cong M_\be (k)^{\al}$.
\end{proof}

We define
\[\M_\e := \{ M \in \CM(A_\e) \mid \text{$M$ is indecomposable, non-projective, and generated in degree 0} \}/{\cong}\]

\begin{lem} \label{lem.x}
Let $r = \mnull_{\F_2} \De_{\e}, \al =2^r, \be = 2^{\frac{n-r-1}{2}}$.
\begin{enumerate}
\item
$\M_\e$ consists of $\al$ modules, say $X_1, X_2, \dots, X_{\al}$.
\item
Every indecomposable non-projective graded maximal Cohen-Macaulay $A_\e$-module is isomorphic to $X_i(j)$
for some $1\leq i \leq \al$ and some $j \in \Z$.
\item If $X \in \M_\e$, then there exists an exact sequence 
\begin{align}\label{eq.es}
0 \to X(-2) \to A_\e(-1)^{\be} \to A_{\e}^{\be} \to X \to 0
\end{align}
in $\grmod A_{\e}$ for every $1 \leq i\leq \al$.
\item If $X \in \M_\e$, then $\Om^iX(i) \in \M_\e$ for any $i \in \Z$.
\end{enumerate}
\end{lem}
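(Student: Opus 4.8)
The plan is to derive all four statements from the structure theory of noncommutative quadric hypersurfaces set up in Section 2.4, specifically from the identification $C(A_\e) \cong M_\be(k)^\al$ of \lemref{lem.c}, the Smith--Van den Bergh duality $\mathfrak{G}\colon \uCM(A_\e) \to \Db(\mod C(A_\e)^{\op})$, and the counting result \thmref{thm.HU}. Statements (1) and (2) are essentially formal consequences of these inputs; statement (3) is the technical heart, as it pins down the exact shape of the minimal free resolution of $X$; and (4) then follows from (3) by a short formal argument.

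For (1), I would invoke the correspondence recalled after the definition of $\mathfrak{G}$ (see \cite{SV}, \cite{MUk}): under $\mathfrak{G}$ the isomorphism classes in $\M_\e$ (indecomposable, non-projective, generated in degree $0$) are in bijection with the isomorphism classes of simple $C(A_\e)$-modules. By \lemref{lem.c} the algebra $C(A_\e) \cong M_\be(k)^\al$ is semisimple with exactly $\al$ simple modules, one for each matrix factor and each of dimension $\be$; hence $|\M_\e| = \al$, and I label its elements $X_1,\dots,X_\al$. For (2) I would argue by counting shift-orbits. Since each $X_i$ is generated in degree $0$, one has $(X_i)_{<0}=0$ and $(X_i)_0\neq 0$, so the bottom degree is invariant under the internal shift: if $X_i \cong X_j(s)$ then $s=0$ and thus $i=j$. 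Therefore $X_1,\dots,X_\al$ represent $\al$ pairwise distinct shift-orbits of indecomposable non-projective graded maximal Cohen--Macaulay modules. By \thmref{thm.HU}(2) there are exactly $\al$ such orbits, so these exhaust all of them, and every indecomposable non-projective graded maximal Cohen--Macaulay module is therefore isomorphic to some $X_i(j)$.

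The main work is (3). Here I would trace the duality $\mathfrak{G}$ backwards: a given $X \in \M_\e$ corresponds to a simple $C(A_\e)$-module, which by \lemref{lem.c} has dimension exactly $\be$ over $k$. Through the standard dictionary for the quadric $A_\e = S_\e/(f_\e)$ (with $f_\e$ central of degree $2$ and $S_\e$ Koszul AS-regular), a $C(A_\e)$-module of dimension $\be$ corresponds to a \emph{linear} matrix factorization $(\varphi,\psi)$ of $f_\e$ of size $\be$, that is, $\varphi\colon S_\e(-1)^{\be}\to S_\e^{\be}$ and $\psi$ with degree-one entries and $\varphi\psi=\psi\varphi=f_\e\cdot\id$, for which $X \cong \Coker\bigl(\varphi\colon A_\e(-1)^{\be}\to A_\e^{\be}\bigr)$. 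Reducing modulo $f_\e$, the factorization gives the $2$-periodic minimal free resolution
\[
\cdots \to A_\e(-2)^{\be} \xrightarrow{\ \psi\ } A_\e(-1)^{\be} \xrightarrow{\ \varphi\ } A_\e^{\be} \to X \to 0 ,
\]
in which $\Om^2 X \cong X(-2)$ (the next differential is $\varphi(-2)$), this periodicity being the usual one for a hypersurface with $\deg f_\e = 2$. Splicing out a single period, i.e. using $\Ker\varphi = \Im\psi = \Om^2 X \cong X(-2)$, produces exactly the four-term exact sequence \eqref{eq.es}. The crux, and the step I expect to be the main obstacle, is verifying the two quantitative facts hidden here: that the resolution is linear (so $\varphi,\psi$ really have degree-one entries) and that the common Betti number is precisely $\be = 2^{(n-r-1)/2}$. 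Both rest on the precise form of the correspondence between simple $C(A_\e)$-modules and matrix factorizations from \cite{SV}, \cite{MUk}; the dimension count $\dim_k C(A_\e) = \al\be^2 = 2^{n-1}$ serves as a useful consistency check.

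Finally, (4) follows formally from (3). Consider the operation $F\colon X \mapsto \Om X(1)$ on $\M_\e$. From the resolution above, the minimal first syzygy $\Om X \cong \Coker\bigl(\psi\colon A_\e(-2)^{\be}\to A_\e(-1)^{\be}\bigr)$ is graded maximal Cohen--Macaulay, generated in degree $1$, and is indecomposable and non-projective because $\Om$ is an autoequivalence of $\uCM(A_\e)$ and the minimal syzygy carries no projective summand; hence $F(X)=\Om X(1)\in\M_\e$. Moreover $F^2(X) = \Om^2 X(2) \cong X$ by the periodicity $\Om^2 X\cong X(-2)$, so $F$ is an involution on $\M_\e$. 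Consequently $\Om^i X(i) = F^i(X) \in \M_\e$ for every $i \in \Z$, the relation $F^{-1}=F$ handling the negative (cosyzygy) case, consistently since $\Om^2 X \cong X(-2)$ yields $\Om^{-1}X(-1)\cong \Om X(1)$.
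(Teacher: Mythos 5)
Your proposal is correct, and for parts (1), (3) and (4) it follows essentially the paper's own route, so the interesting comparison is part (2). For (1) your argument is verbatim the paper's: the duality $\mathfrak{G}$ restricts to a bijection between $\M_\e$ and simple $C(A_\e)$-modules ([MUk, Lemma 4.13(5)]), and \lemref{lem.c} gives exactly $\al$ simples. For (3) your strategy (linear matrix factorization of size $\be$, two-periodicity $\Om^2X \cong X(-2)$, splicing one period) is the paper's strategy, and the two facts you explicitly flag as the remaining obstacle --- linearity of the minimal free resolution and the Betti number being exactly $\be$ --- are precisely what the paper settles by citation, though from the noncommutative matrix factorization paper \cite{MUm} rather than from \cite{SV}, \cite{MUk}: linearity is [MUm, Proposition 7.8(1)], the stable Betti number $\be$ comes from $\dim_k\mathfrak{G}(X)=\be$, the propagation to \emph{all} terms of the resolution is [MUm, Proposition 6.2] (rank-$\be$ noncommutative matrix factorization), and the periodicity, using centrality of $f_\e$, is [MUm, Lemma 4.11]. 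Where you genuinely diverge is (2): the paper deduces it from semisimplicity of $C(A_\e)$ via [MUk, Lemma 5.2], whereas you count shift-orbits --- the normalization $(X_i)_{<0}=0$, $(X_i)_0\neq 0$ forces $X_i\cong X_j(s)\Rightarrow s=0$, so $X_1,\dots,X_\al$ occupy $\al$ pairwise distinct orbits, and \thmref{thm.HU}(2) says there are only $\al$ orbits in total. This is valid and more elementary in that it avoids [MUk, Lemma 5.2], but it buys this at the cost of invoking the full classification theorem of \cite{HU} as a black box; the paper's route stays inside the $C(A_\e)$-machinery already set up in Section 2.4, which also makes the lemma independent of whether the count in \cite{HU} was itself obtained by similar means. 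Finally, for (4) the paper writes only that it follows from (3); your involution $F(X)=\Om X(1)$ with $F^2=\operatorname{id}$ (handling negative $i$ via $F^{-1}=F$) is a correct and complete way to fill in that step.
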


\begin{proof}
(1) By \cite[Lemma 4.13 (5)]{MUk}, the duality $\mathfrak{G}$ (which appeared in Section 2.4) induces a bijection between $\M_\e$ and isomorphism classes of simple $C(A_\e)$-modules.
Since $C(A_\e) \cong M_\be (k)^{\al}$ by \lemref{lem.c}, $C(A_\e)$ has $\al$ simple modules up to isomorphism, so the claim holds.

(2) Since $C(A_\e) \cong M_\be (k)^{\al}$ is semisimple, the result follows from \cite[Lemma 5.2]{MUk}.

(3) Since $C(A_\e) \cong M_\be (k)^{\al}$, every simple $C(A_\e)$-module has dimension $\be$,
so it follows from the proof of (1) that $\dim_k \mathfrak{G}(X)= \be$.
Since $X$ has a linear free resolution by \cite[Proposition 7.8 (1)]{MUm}, we see
\[ \mathfrak{G}(X) = \Ext^*_{A_\e}(X, k)[w^{-1}]_0 \]
where $\Ext^*_{A_\e}(X, k)$ is defined by $\bigoplus_{j \in \N}\Ext^j_{A_\e}(X, k)$.
Thus we get 
\[
\beta = \dim_k \Ext^*_{A_\e}(X, k)[w^{-1}]_0=\dim_k \Ext^{2i}_{A_\e}(X, k)
\] for $i \gg 0$,
so the $2i$-th term of the minimal free resolution of $X$ is $A_\e(-2i)^\be$ for $i \gg 0$.
By \cite[Proposition 6.2]{MUm}, this resolution is obtained from a noncommutative matrix factorization of rank $\be$,
so the $i$-th term must be $A_\e(-i)^\be$ for every $i \geq 0$.
Since $f_\e$ is a central element of $S_\e$, we have $\Omega^{2}X \cong X(-2)$ by \cite[Lemma 4.11]{MUm}, and hence the result.

(4) This follows from (3).
\end{proof}

For a graded vector space $V=\bigoplus_{i \in \Z} V_i$ such that $\dim_k V_i < \infty$ for all $i \in \Z$, we define the \emph{Hilbert series} of $V$ by
$H_V(t) :=\sum_{i \in \Z} (\dim_kV_i)t^i \in \Z[[t, t^{-1}]]$.

\begin{lem}\label{lem.hil}
Let $r = \mnull_{\F_2} \De_{\e}, \al =2^r, \be = 2^{\frac{n-r-1}{2}}$. 
Then the following hold.
\begin{enumerate}
\item $H_{A_\e}(t)=(1+t)(1-t)^{-(n-1)}$.
\item If $X \in \M_\e$, then $H_{X}(t)=\be (1-t)^{-(n-1)}$.
\item If $X \in \M_\e$, then $H_{\Hom_{A_\e}(X,A_\e)}(t)=\be t(1-t)^{-(n-1)}$.
\end{enumerate}
\end{lem}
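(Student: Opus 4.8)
All three statements are formal Hilbert-series computations resting on two inputs: the Hilbert series of $S_\e$ and the four-term resolution \eqref{eq.es} furnished by \lemref{lem.x}(3). The plan is to use additivity of Hilbert series on exact sequences (the alternating sum vanishes) throughout, letting (1) feed (2), and letting the \emph{dualized} resolution feed (3). For (1), I would first record that the ordered monomials $x_1^{a_1}\cdots x_n^{a_n}$ form a $k$-basis of $S_\e$, so $H_{S_\e}(t)=(1-t)^{-n}$ exactly as in the commutative polynomial ring (equivalently, $S_\e$ is AS-regular of dimension $n$). Since $f_\e\in (S_\e)_2$ is central and regular, multiplication by $f_\e$ yields the short exact sequence $0\to S_\e(-2)\to S_\e\to A_\e\to 0$, and additivity gives $H_{A_\e}(t)=(1-t^2)H_{S_\e}(t)=(1+t)(1-t)^{-(n-1)}$.

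\textbf{Part (2).} Here I would apply additivity to \eqref{eq.es}, i.e. to $0\to X(-2)\to A_\e(-1)^\be\to A_\e^\be\to X\to 0$, using the shift rule $H_{M(s)}(t)=t^{-s}H_M(t)$. Writing out the alternating sum gives $H_X(t)(1-t^2)=\be H_{A_\e}(t)(1-t)$; substituting the value of $H_{A_\e}(t)$ from (1), the factor $(1+t)$ cancels and one solves directly for $H_X(t)=\be(1-t)^{-(n-1)}$.

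\textbf{Part (3).} The idea is to dualize \eqref{eq.es} by $\Hom_{A_\e}(-,A_\e)$. The key point is that $X$, its first syzygy, and $X(-2)$ all lie in $\CM(A_\e)$ — syzygies of maximal Cohen–Macaulay modules over the AS-Gorenstein algebra $A_\e$ are again maximal Cohen–Macaulay — so $\Ext^{>0}_{A_\e}(-,A_\e)$ vanishes on every term and the dualized complex remains exact:
\[
0\to \Hom_{A_\e}(X,A_\e)\to \Hom_{A_\e}(A_\e^\be,A_\e)\to \Hom_{A_\e}(A_\e(-1)^\be,A_\e)\to \Hom_{A_\e}(X(-2),A_\e)\to 0.
\]
Taking Hilbert series and using $\Hom_{A_\e}(M(s),A_\e)\cong \Hom_{A_\e}(M,A_\e)(-s)$ together with $\Hom_{A_\e}(A_\e,A_\e)\cong A_\e$, the alternating sum becomes an equation in $h(t):=H_{\Hom_{A_\e}(X,A_\e)}(t)$ and $H_{A_\e}(t)$, namely $h(t)(1-t^{-2})=\be H_{A_\e}(t)(1-t^{-1})$; clearing denominators and cancelling $(t-1)$ gives $h(t)(1+t)=\be t\,H_{A_\e}(t)$, whence $h(t)=\be t(1-t)^{-(n-1)}$.

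\textbf{Main obstacle.} The arithmetic is routine; the one place demanding care is the internal-shift bookkeeping in (3), where the contravariant $\Hom$ reverses the shifts, so that $\Hom_{A_\e}(X(-2),A_\e)$ contributes $t^{-2}h(t)$ rather than $t^{2}h(t)$. Getting these exponents right is precisely what makes the $(1+t)$-factors cancel to the clean answer, and a sign slip here produces a spurious $t^{-1}$ term. The only structural input I must be sure to invoke is that dualization preserves exactness of \eqref{eq.es}, which is exactly where the maximal Cohen–Macaulay hypothesis on $X$ (and hence on its syzygies) is essential.
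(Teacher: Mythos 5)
Your proposal is correct and follows essentially the same route as the paper: part (1) via the short exact sequence $0\to S_\e(-2)\to S_\e\to A_\e\to 0$ and $H_{S_\e}(t)=(1-t)^{-n}$, part (2) by additivity of Hilbert series on the resolution \eqref{eq.es}, and part (3) by applying $\Hom_{A_\e}(-,A_\e)$ to \eqref{eq.es} (exactness being guaranteed by the maximal Cohen--Macaulay property) and solving $(1-t^{-2})h(t)=\be(1-t^{-1})H_{A_\e}(t)$. Your shift bookkeeping $\Hom_{A_\e}(X(-2),A_\e)\cong\Hom_{A_\e}(X,A_\e)(2)$ matches the paper's dualized sequence exactly, and your explicit justification of exactness via syzygies being MCM is a point the paper leaves implicit.
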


\begin{proof}
(1) The exact sequence
$0 \to S_\e(-2) \to S_\e \to A_\e \to 0$ implies 
$t^2H_{S_\e}(t)-H_{S_\e}(t)+H_{A_\e}(t)=0$.
Since $H_{S_\e}(t) = (1-t)^{-n}$, we have
$H_{A_\e}(t) = (1-t^2)H_{S_\e}(t)= (1+t)(1-t)^{-(n-1)}$.

(2) By (\ref{eq.es}), we have $H_X(t)=\be (1+t)^{-1}H_{A_\e}(t)$, so the result follows from (1).

(3) Since $X \in \CM(A_\e)$, applying $\Hom_{A_\e}(-,A_\e)$ to (\ref{eq.es}) yields an exact sequence
\begin{align*}
0 \to \Hom_{A_\e}(X,A_\e) \to A_\e^{\be} \to A_{\e}(1)^{\be} \to \Hom_{A_\e}(X,A_\e)(2) \to 0,
\end{align*}
so we have $(t^{-2}-1)H_{\Hom_{A_\e}(X,A_\e)}(t) = \be (t^{-1}-1)H_{A_\e}(t)$. Thus the result follows from (1).
\end{proof}

\begin{lem}\label{lem.extx}
For $X, Y \in \M_\e$, $q\geq 1$ and $i \in \Z$, 
\[\Ext^q_{\grmod A_\e}(X,Y(i)) \cong \begin{cases} k &\text{if}\; i=-q \; \text{and} \; X \cong \Om^{i}Y(i), \\
0 &\text{otherwise.} \end{cases}\]
\end{lem}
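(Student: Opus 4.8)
The plan is to pass to the stable category $\uCM(A_\e)$ and exploit the two structural facts already available: the $2$-periodicity $\Om^2 X \cong X(-2)$ coming from the centrality of $f_\e$ (as used in \lemref{lem.x}(3)), and the triangle equivalence $\uCM(A_\e) \cong \Db(\mod k^\al)$ of \thmref{thm.HU}. First I would reduce the graded $\Ext$ to a stable hom. Since $A_\e$ is AS-Gorenstein and $X,Y$ are maximal Cohen--Macaulay, $\CM(A_\e)$ is a Frobenius category, and a standard dimension-shifting argument (cf.\ \cite{Bu}) gives, for $q\geq 1$,
\[
\Ext^q_{\grmod A_\e}(X, Y(i)) \cong \Hom_{\uCM(A_\e)}(\Om^q X, Y(i)).
\]
Next I would make $\Om^q X$ explicit: from $\Om^2 X \cong X(-2)$ together with $\Om X(1) \in \M_\e$ (the case $i=1$ of \lemref{lem.x}(4)) one gets $\Om^q X \cong W(-q)$, where $W=X$ if $q$ is even and $W=\Om X(1)$ if $q$ is odd, with $W \in \M_\e$ in both cases. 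As the internal shift is an autoequivalence, the right-hand side becomes $\Hom_{\uCM(A_\e)}(W, Y(i+q))$, so everything is reduced to stable homs between shifts of modules in $\M_\e$.

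The heart of the matter is the claim that for $U,V \in \M_\e$ and $j \in \Z$,
\[
\Hom_{\uCM(A_\e)}(U, V(j)) \cong
\begin{cases} k & \text{if } j=0 \text{ and } U \cong V, \\ 0 & \text{otherwise.} \end{cases}
\]
I would prove this in three cases. For $j<0$ the module $V(j)$ is concentrated in degrees $\geq -j>0$ while $U$ is generated in degree $0$, so already $\Hom_{\grmod A_\e}(U,V(j))=0$ and hence the stable hom vanishes. For $j=0$ the equivalence of \thmref{thm.HU} identifies $U,V$ with indecomposable objects of $\Db(\mod k^\al)$, between which distinct objects admit no nonzero maps and each has endomorphism algebra $k$; this yields the asserted $k$ or $0$ (noting that non-projective indecomposables stay non-isomorphic in the stable category). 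For $j>0$ I would use that $\Db(\mod k^\al)$ is $0$-Calabi--Yau, since $k^\al$ is semisimple and its Serre functor is the identity; transporting this along the equivalence makes $\uCM(A_\e)$ itself $0$-Calabi--Yau, so Serre duality gives
\[
\Hom_{\uCM(A_\e)}(U, V(j)) \cong D\,\Hom_{\uCM(A_\e)}(V(j), U) \cong D\,\Hom_{\uCM(A_\e)}(V, U(-j)) = 0
\]
by the already-settled case $-j<0$.

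Finally I would assemble these by taking $U=W$, $V=Y$, $j=i+q$: the group is nonzero exactly when $i+q=0$ and $W \cong Y$, and it is then one-dimensional. It remains to match this with the stated condition ``$i=-q$ and $X \cong \Om^i Y(i)$''. Setting $\tau(M):=\Om M(1)$, which by \lemref{lem.x}(4) and the $2$-periodicity is an involution of $\M_\e$, one has $W=X$ or $W=\tau(X)$ according to the parity of $q$, while $\Om^i Y(i)=\Om^{-q}Y(-q)$ equals $Y$ or $\tau(Y)$ with the matching parity; the equivalence of the two conditions then follows from $\tau^2 \cong \id$. I expect this parity and internal-degree bookkeeping, rather than any single cohomological computation, to be the only delicate point: once the reduction to $\Hom_{\uCM(A_\e)}$ is in place, all the homological content is supplied by the periodicity and by \thmref{thm.HU}, and the real work is keeping the shift $(-q)$ and the involution $\tau$ aligned so that the nonvanishing locus is recorded in the intrinsic form $X \cong \Om^i Y(i)$.
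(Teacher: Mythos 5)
Your proof is correct, and while it opens with the same move as the paper, it finishes by a genuinely different mechanism. Both arguments pass to the stable category and use \lemref{lem.x}~(4) together with the two-periodicity $\Om^2X\cong X(-2)$ to normalize everything to objects of $\M_\e$ and shifts; but the paper then converts the \emph{internal} shift into a \emph{homological} one, writing $Y(i)\cong\Om^{-i}(\Om^iY(i))$ with $\Om^iY(i)\in\M_\e$, and applies the Smith--Van den Bergh duality $\mathfrak{G}:\uCM(A_\e)\to\Db(\mod C(A_\e)^{\op})$, under which objects of $\M_\e$ become simple modules over the semisimple algebra $C(A_\e)\cong M_\be(k)^\al$ (\lemref{lem.c}); the lemma then collapses to computing $\Ext^{i+q}$ between simples over a semisimple algebra, which settles all signs of $i+q$ and the isomorphism condition in one stroke. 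You instead keep the internal shift, reduce to $\Hom_{\uCM(A_\e)}(W,Y(i+q))$ with $W\in\M_\e$, and split into cases on $j=i+q$: a grading-support argument for $j<0$, the quoted triangle equivalence $\uCM(A_\e)\cong\Db(\mod k^\al)$ of \thmref{thm.HU} plus Krull--Schmidt for $j=0$, and transport of the trivial Serre functor for $j>0$. What your route buys is independence from the $C(A_\e)$/$\mathfrak{G}$ machinery---\thmref{thm.HU} enters only as a black box---at the cost of the parity/involution bookkeeping (which does check out: $\tau=\Om(-)(1)$ is an involution on $\M_\e$, and $\Om^{-q}Y(-q)$ is $Y$ or $\tau(Y)$ according to the parity of $q$) and of the Serre-functor argument. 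One simplification worth noting: the appeal to the $0$-Calabi--Yau property is avoidable, because in any category triangle-equivalent to $\Db(\mod k^\al)$ the $\Hom$-space between two indecomposable objects is $k$ or $0$ according to whether they are isomorphic, and $W\not\cong Y(j)$ in $\grmod A_\e$ (hence stably, by Krull--Schmidt) for every $j\neq 0$ simply by comparing lowest degrees; this disposes of $j<0$ and $j>0$ uniformly and makes the whole case analysis rest on a single principle.
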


\begin{proof}
Since $X, Y \in \M_\e$,
\begin{align*}
\Ext^q_{\grmod A_\e}(X,Y(i))
&\cong \Hom_{\uCM(A_\e)}(X,Y(i)[q])\\
&\cong \Hom_{\uCM(A_\e)}(X,\Om^{-i}\Om^iY(i)[q])\\
&\cong \Hom_{\uCM(A_\e)}(X,\Om^iY(i)[i+q])\\
&\cong \Hom_{\Db(\mod C(A_\e)^{\op})}(\mathfrak{G}(\Om^iY(i)[i+q]),\mathfrak{G}(X))\\
&\cong \Hom_{\Db(\mod C(A_\e)^{\op})}(\mathfrak{G}(\Om^iY(i))[-i-q],\mathfrak{G}(X))
\quad (\because \text{$\mathfrak{G}$ is a duality})\\
&\cong \Hom_{\Db(\mod C(A_\e)^{\op})}(\mathfrak{G}(\Om^iY(i)),\mathfrak{G}(X)[i+q]).
\end{align*}
By \lemref{lem.x} (4), $X, \Om^iY(i) \in \M_\e$, so $\mathfrak{G}(X), \mathfrak{G}(\Om^iY(i))$ are simple $C(A)$-modules.
Thus we have
\begin{align}\label{eq.gext}
\Hom_{\Db(\mod C(A_\e)^{\op})}(\mathfrak{G}(\Om^iY(i)),\mathfrak{G}(X)[i+q]) \cong \Ext^{i+q}_{C(A_\e)^{\op}}(\mathfrak{G}(\Om^iY(i)),\mathfrak{G}(X)).
\end{align}
Since $C(A_\e) \cong M_\be (k)^{\al}$ is semisimple, (\ref{eq.gext}) is isomorphic to $k$ if $i+q = 0$ and $X \cong \Om^iY(i)$,
and it is zero otherwise.
\end{proof}

\begin{lem}\label{lem.homx}
For $X, Y \in \M_\e$ and $i \leq 0$,
\[\Hom_{\grmod A_\e}(X,Y(i)) \cong \begin{cases} k &\text{if}\; i=0 \; \text{and} \; X \cong Y, \\
0 &\text{otherwise.} \end{cases}\]
\end{lem}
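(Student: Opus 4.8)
The plan is to split the statement according to the sign of $i$ and to reduce, in the critical case $i=0$, from module homomorphisms to stable homomorphisms, which have already been computed via the duality $\mathfrak{G}$ and the semisimplicity of $C(A_\e)$.

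First I would dispose of the case $i<0$ by a pure degree count. Since $X,Y\in\M_\e$ are generated in degree $0$, both are concentrated in nonnegative degrees, so $Y(i)_0=Y_i=0$ whenever $i<0$. A degree-preserving homomorphism $\phi\colon X\to Y(i)$ sends the degree-$0$ generators of $X$ into $Y(i)_0=0$, and because $X=A_\e\cdot X_0$ this forces $\phi=0$. Hence $\Hom_{\grmod A_\e}(X,Y(i))=0$ for $i<0$, which is the required vanishing.

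For $i=0$ the main step is to show that every homomorphism factoring through a projective vanishes, i.e. $P(X,Y)=0$. I would write such a factorization through $P=\bigoplus_j A_\e(-d_j)$ and decompose the composite as $\sum_j h_j g_j$ with $g_j\colon X\to A_\e(-d_j)$ and $h_j\colon A_\e(-d_j)\to Y$. By \lemref{lem.hil}(3) the series $H_{\Hom_{A_\e}(X,A_\e)}(t)=\be\, t(1-t)^{-(n-1)}$ has no term in nonpositive degree, so $\Hom_{\grmod A_\e}(X,A_\e(-d_j))=0$ whenever $d_j\geq 0$; on the other hand $\Hom_{\grmod A_\e}(A_\e(-d_j),Y)\cong Y_{d_j}=0$ whenever $d_j<0$, since $Y$ is generated in degree $0$. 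Thus for each $j$ either $g_j=0$ or $h_j=0$, so $h_jg_j=0$ and $P(X,Y)=0$. This identifies $\Hom_{\grmod A_\e}(X,Y)\cong\Hom_{\uCM(A_\e)}(X,Y)$.

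It then remains to compute the stable $\Hom$, which I would do exactly as in the proof of \lemref{lem.extx} specialized to $q=0$: the duality $\mathfrak{G}$ gives $\Hom_{\uCM(A_\e)}(X,Y)\cong\Hom_{\Db(\mod C(A_\e)^{\op})}(\mathfrak{G}(Y),\mathfrak{G}(X))$. By \lemref{lem.x}(1) the objects $\mathfrak{G}(X),\mathfrak{G}(Y)$ are simple $C(A_\e)$-modules, and by \lemref{lem.c} the algebra $C(A_\e)\cong M_\be(k)^\al$ is semisimple, so this derived $\Hom$ collapses to a module $\Hom$ between simples, equal to $k$ if $\mathfrak{G}(X)\cong\mathfrak{G}(Y)$ and $0$ otherwise; since $\mathfrak{G}$ induces a bijection on isomorphism classes, this is $k$ exactly when $X\cong Y$. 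Combining with $P(X,Y)=0$ gives the asserted value at $i=0$. I expect the vanishing $P(X,Y)=0$ to be the only point requiring care; once \lemref{lem.hil}(3) is in hand, the degree bookkeeping makes it routine, and the rest is a transcription of the extension-group computation.
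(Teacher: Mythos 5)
Your proposal is correct, but it handles the essential case $i=0$ by a genuinely different route than the paper. The paper never shows $P(X,Y)=0$; instead it applies $\Hom_{A_\e}(-,Y)$ to the sequence $0 \to X(-2) \to A_\e(-1)^{\be} \to \Om X \to 0$ from \lemref{lem.x}(3) and takes degree $-2$ parts, where the middle projective term contributes $Y^\be_{-1}=0$, obtaining $\Hom_{\grmod A_\e}(X,Y) \cong \Ext^2_{\grmod A_\e}(X,Y(-2))$; this dimension shift exists precisely so that \lemref{lem.extx} (which only covers $q\geq 1$) can be quoted, together with the periodicity $\Om^{-2}Y(-2)\cong Y$. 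You instead prove the stronger and more conceptual statement that every map $X\to Y$ factoring through a graded projective vanishes, using the Hilbert series of \lemref{lem.hil}(2),(3) to kill each component $X\to A_\e(-d_j)\to Y$ according to the sign of $d_j$; this identifies $\Hom_{\grmod A_\e}(X,Y)$ with $\Hom_{\uCM(A_\e)}(X,Y)$, after which you rerun the duality-plus-semisimplicity argument of \lemref{lem.extx} at $q=0$. Your route makes explicit the fact that module Hom and stable Hom coincide here (which the paper only obtains implicitly through the $\Ext^2$ identification), at the cost of redoing the $\mathfrak{G}$-computation rather than citing it; the paper's route is shorter given \lemref{lem.extx} but hides the reason the projective contributions disappear inside a choice of internal degree. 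One cosmetic slip: since $\grmod A_\e$ consists of right modules, generation in degree $0$ reads $X=X_0\cdot A_\e$, not $A_\e\cdot X_0$; this does not affect your degree-count argument for $i<0$, which is the same as the paper's.
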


\begin{proof}
If $i<0$, then the result follows from the fact that $X$ is generated in degree 0.
Therefore, let us check the case $i=0$.
Applying $\Hom_{A_\e}(-,Y)$ to the exact sequence $0 \to X(-2) \to A_\e(-1)^{\be} \to \Om X \to 0$, we have an exact sequence
\begin{align}\label{eq.homx}
0 \to \Hom_{A_\e}(\Om X,Y) \to \Hom_{A_\e}(A_\e(-1)^{\be},Y) \to \Hom_{A_\e}(X(-2),Y) \to \Ext^1_{A_\e}(\Om X,Y) \to 0.
\end{align}
Since $\Hom_{A_\e}(A_\e(-1)^{\be},Y)_{-2} \cong Y^\be_{-1}=0$,
taking degree $-2$ parts of (\ref{eq.homx}) yields
\begin{align}\label{eq.ghom}
\Hom_{\grmod A_\e}(X,Y) &\cong \Hom_{A_\e}(X(-2),Y)_{-2} \cong \Ext^1_{A_\e}(\Om X,Y)_{-2} \cong \Ext^2_{A_\e}(X,Y)_{-2} \notag \\ 
&\cong \Ext^2_{\grmod A_\e}(X,Y(-2)).
\end{align}
By \lemref{lem.extx}, (\ref{eq.ghom}) is isomorphic to $k$ if $X \cong \Om^{-2}Y(-2) \cong Y$, and it is zero otherwise.
\end{proof}

\begin{lem} \label{lem.Amx}
Let $r=\mnull_{\F_2} \De_{\e}$ and $\al=2^r$.
Then we have a semi-orthogonal decomposition $\Phi\D_{\rm Sg}^{\Z}(A_\e) = \langle \cX_1, \cX_2, \dots, \cX_\al \rangle$.
\end{lem}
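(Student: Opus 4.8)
The plan is to recognize each $\cX_i$ as the image $\Phi(\upsilon X_i)$ under the fully faithful Orlov functor of \thmref{thm.O}, and then transport the completely orthogonal exceptional structure of $X_1,\dots,X_\al$ in $\uCM(A_\e)$ across $\Phi$.

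First I would verify the hypotheses of \lemref{lem.Ami} for each $X_i \in \M_\e$. Since $(X_i)_{<0}=0$ we have $X_i=(X_i)_{\geq 0}$, and because a homomorphism out of a module generated in degree $0$ is determined by the images of its degree-$0$ generators, the vanishing $\Hom_{\grmod A_\e}(X_i, A_\e(j))=0$ for all $j\leq 0$ follows from \lemref{lem.hil}(3): the Hilbert series $H_{\Hom_{A_\e}(X_i,A_\e)}(t)=\be t(1-t)^{-(n-1)}$ has no terms in nonpositive degree. As $A_\e$ has positive Gorenstein parameter $n-2$ (here $n\geq 3$), \lemref{lem.Ami} gives $\Phi(\upsilon X_i)\cong \pi X_i=\cX_i$, so every $\cX_i$ lies in $\Phi\D_{\rm Sg}^{\Z}(A_\e)$.

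Next I would establish generation. Via Buchweitz's equivalence $\uCM(A_\e)\xrightarrow{\sim}\D_{\rm Sg}^{\Z}(A_\e)$, it suffices to show that $X_1,\dots,X_\al$ generate $\uCM(A_\e)$ as a triangulated category. By \lemref{lem.x}(2) every indecomposable object has the form $X_l(j)$; using $[1]=\Om^{-1}$ together with \lemref{lem.x}(4), internal shifts are absorbed into the translation, namely $X_i[j]\cong X_l(j)$ where $X_l=\Om^{-j}X_i(-j)\in\M_\e$, and $X_i\mapsto X_l$ is a bijection of $\M_\e$, so each $X_l(j)$ is a translate of some $X_i$. Since $\uCM(A_\e)\cong\Db(\mod k^\al)$ by \thmref{thm.HU}(1) and $k^\al$ is semisimple, every object is a finite direct sum of shifts of the indecomposables; hence $X_1,\dots,X_\al$ generate. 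Applying the exact fully faithful functor $\Phi$, the thick subcategory $\Phi\D_{\rm Sg}^{\Z}(A_\e)$ is generated by the objects $\Phi(\upsilon X_i)=\cX_i$.

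Finally I would verify complete orthogonality. Because $\Phi$ is fully faithful and exact, $\Hom_{\Db(\qgr A_\e)}(\cX_i,\cX_j[m])\cong\Hom_{\uCM(A_\e)}(X_i,X_j[m])$ for all $m\in\Z$. Under the duality $\mathfrak{G}\colon\uCM(A_\e)\to\Db(\mod C(A_\e)^{\op})$ of Section~2.4, the modules $X_i$ (generated in degree $0$) correspond to the simple $C(A_\e)$-modules, and since $C(A_\e)\cong M_\be(k)^\al$ is semisimple by \lemref{lem.c}, this space is $\Ext^m_{C(A_\e)^{\op}}(\mathfrak{G}X_j,\mathfrak{G}X_i)$, which equals $k$ when $i=j$ and $m=0$, and vanishes otherwise (the same mechanism already used in \lemref{lem.extx} and \lemref{lem.homx}). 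Thus each $\cX_i$ is exceptional and the collection is completely orthogonal, so taking $\sE_{i-1}=\langle\cX_i\rangle$ yields the semi-orthogonal decomposition $\Phi\D_{\rm Sg}^{\Z}(A_\e)=\langle\cX_1,\dots,\cX_\al\rangle$. I expect the main obstacle to be the generation step: one must make precise that internal degree shifts are realized as translations via $X_i[j]\cong X_l(j)$, and combine this with the semisimplicity of $\Db(\mod k^\al)$ to conclude that the finitely many objects $X_1,\dots,X_\al$, rather than all of their degree shifts, already generate $\D_{\rm Sg}^{\Z}(A_\e)$.
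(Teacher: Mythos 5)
Your proposal is correct and takes essentially the same route as the paper: identify $\cX_i \cong \Phi(\upsilon X_i)$ via \lemref{lem.hil} and \lemref{lem.Ami}, and derive the semi-orthogonal decomposition from the semisimplicity of $C(A_\e)$ through the duality $\mathfrak{G}$ and Buchweitz's equivalence $\uCM(A_\e) \xrightarrow{\sim} \D_{\rm Sg}^{\Z}(A_\e)$. The only difference is cosmetic: for generation you invoke \thmref{thm.HU}(1) together with \lemref{lem.x}(2),(4) to write every object as a sum of translates of $X_1,\dots,X_\al$, whereas the paper simply transports the decomposition $\Db(\mod C(A_\e)^{\op}) = \langle \mathfrak{G}(X_\al),\dots,\mathfrak{G}(X_1)\rangle$ across the duality $\mathfrak{G}$ to get $\uCM(A_\e) = \langle X_1,\dots,X_\al\rangle$ in one step.
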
 

\begin{proof}
Since $C(A_\e) \cong M_\be (k)^{\al}$ is semisimple and $\mathfrak{G}(X_\al), \dots, \mathfrak{G}(X_1)$ form a complete set of simple modules, we have  a semi-orthogonal decomposition $\Db(\mod C(A_\e)^{\op}) = \< \mathfrak{G}(X_\al), \dots, \mathfrak{G}(X_1)\>$.
Thus we see $\uCM(A_\e) = \< X_1, \dots, X_\al \>$.
Under the equivalence $\uCM(A_\e) \xrightarrow{\sim} \D_{\rm Sg}^{\Z}(A_\e)$, $X_i$ corresponds to $\upsilon X_i$,
so $\D_{\rm Sg}^{\Z}(A_\e) = \< \upsilon X_1, \dots, \upsilon X_\al \>$.
By Lemma \ref{lem.hil} and Lemma \ref{lem.Ami}, it follows that $\Phi (\upsilon X_i)\cong \cX_i$.
Hence we get  $\Phi\D_{\rm Sg}^{\Z}(A_\e) = \< \Phi(\upsilon X_1), \dots, \Phi(\upsilon X_\al) \> = \langle \cX_1,\dots, \cX_\al \rangle$.
\end{proof}

\subsection{Exceptional Sequences}
In this subsection,  we show that $\Db(\qgr A_\e)$ has a full exceptional sequence. 

\begin{dfn} \label{dfn.res}
Let $\sT$ be a $k$-linear triangulated category.  
\begin{enumerate}
\item An object $E$  of $\sT$ is \emph{exceptional} if
$\End_{\sT}(E)=k$
and 
$\Hom_{\sT}(E, E[q])=0$ for every $q\neq 0$.
\item A sequence of objects $(E_0, \dots, E_{\ell-1})$ in $\sT$ is an \emph{exceptional sequence} if
\begin{enumerate}
\item
$E_i$ is an exceptional object for every $i=0, \dots, \ell-1$, and 
\item 
$\Hom_{\sT}(E_j, E_i[q])=0$ for every $q$ and every $0\leq i<j\leq \ell-1$.
\end{enumerate} 
\item An exceptional sequence $(E_0, \dots, E_{\ell-1})$ in $\sT$ is \emph{full} if
$\sT$ is generated by $E_0, \dots, E_{\ell-1}$, that is, 
the smallest strictly full triangulated subcategory of $\sT$ containing $E_0, \dots, E_{\ell-1}$ is equal to $\sT$. 
\item An exceptional sequence $(E_0, \dots, E_{\ell-1})$ in $\sT$ is \emph{strong} if $\Hom_{\sT}(U, U[q])=0$ for all $q\neq 0$, where $U= E_0 \oplus \cdots \oplus E_{\ell-1}$.
\end{enumerate}
\end{dfn} 
If $(E_0, \dots, E_{\ell-1})$ is a full exceptional sequence in $\sT$, then we get a semi-orthogonal
decomposition $\sT=\<E_0, \dots, E_{\ell-1}\>$.

\begin{lem} \label{lem.2a}
Let $r=\mnull_{\F_2} \De_{\e}$ and $\al=2^r$.
Assume $n \geq 3$.
Then 
\begin{align}\label{eq.fes}
(\cA_\e(-n+3), \dots, \cA_\e(-1), \cA_\e, \cX_1, \cX_2, \dots, \cX_\al)
\end{align}
is a full exceptional sequence in $\Db(\qgr A_\e)$.
\end{lem}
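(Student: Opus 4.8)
The plan is to build the claimed sequence out of two semi-orthogonal decompositions that are already available, and then to promote that decomposition to an exceptional sequence by checking exceptionality of each individual object. Recall that $A_\e$ is AS-Gorenstein of dimension $d=n-1$ and Gorenstein parameter $\ell=n-2$, so $\ell>0$ once $n\geq 3$ and $-\ell+1=-n+3$. Orlov's theorem (\thmref{thm.O}) then gives
\[\Db(\qgr A_\e)=\< \cA_\e(-n+3),\dots,\cA_\e(-1),\cA_\e,\Phi\D_{\rm Sg}^{\Z}(A_\e)\>,\]
and inserting the decomposition $\Phi\D_{\rm Sg}^{\Z}(A_\e)=\<\cX_1,\dots,\cX_\al\>$ of \lemref{lem.Amx} refines this to a single semi-orthogonal decomposition
\[\Db(\qgr A_\e)=\< \cA_\e(-n+3),\dots,\cA_\e(-1),\cA_\e,\cX_1,\dots,\cX_\al\>.\]
Because each listed object generates its component, this decomposition already supplies fullness (the objects generate $\Db(\qgr A_\e)$) together with condition (b) of \dfnref{dfn.res}: for any later object $E_j$ and earlier object $E_i$, semi-orthogonality of the components yields $\Hom_{\Db(\qgr A_\e)}(E_j,E_i[q])=0$ for all $q$, since each component $\<\cA_\e(i)\>$ and $\<\cX_i\>$ is closed under the shift functor.

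It remains to prove that every object in the list is exceptional. For the twists $\cA_\e(-j)$ it suffices, since the internal shift is an autoequivalence, to treat $\cA_\e$ itself. As $A_\e$ is a free (hence maximal Cohen–Macaulay) module, I would apply \lemref{lem.HE}. Part (1) identifies $\Ext^q_\cA(\cA_\e,\cA_\e)$ with $\Ext^q_{A_\e}(A_\e,A_\e)$ for $0\leq q\leq n-3$; projectivity of $A_\e$ makes the degree-$0$ component equal to $k$ for $q=0$ and to $0$ for $1\leq q\leq n-3$. Part (2) gives $\Ext^{n-2}_\cA(\cA_\e,\cA_\e)\cong D\Hom_{A_\e}(A_\e,(A_\e)_\nu(-\ell))$, whose degree-$0$ component is dual to $(A_\e)_{-\ell}=0$ because $\ell=n-2>0$; and part (3) kills everything in degrees $q\geq n-1$. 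Hence $\End_{\qgr A_\e}(\cA_\e)=k$ and all higher self-extensions vanish, so $\cA_\e$, and therefore each $\cA_\e(-j)$, is exceptional.

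For the modules $\cX_i$ I would use that $\Phi$ is fully faithful and $\Phi(\upsilon X_i)\cong\cX_i$ (\lemref{lem.Ami}), together with the equivalence $\uCM(A_\e)\xrightarrow{\sim}\D_{\rm Sg}^{\Z}(A_\e)$, to obtain
\[\Hom_{\Db(\qgr A_\e)}(\cX_i,\cX_i[q])\cong\Hom_{\D_{\rm Sg}^{\Z}(A_\e)}(\upsilon X_i,(\upsilon X_i)[q])\cong\Hom_{\uCM(A_\e)}(X_i,X_i[q]).\]
Applying the duality $\mathfrak G$ converts the right-hand side into $\Ext^q_{C(A_\e)^{\op}}(\mathfrak G(X_i),\mathfrak G(X_i))$, exactly as in the computation of \lemref{lem.extx}. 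Since $\mathfrak G(X_i)$ is a simple module over the semisimple algebra $C(A_\e)\cong M_\be(k)^\al$ (by \lemref{lem.c} and \lemref{lem.x}), this group is $k$ for $q=0$ and vanishes otherwise. Equivalently, each $\upsilon X_i$ is exceptional in $\D_{\rm Sg}^{\Z}(A_\e)$ because simple modules over a semisimple algebra are exceptional, and a fully faithful triangulated functor preserves exceptionality.

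The conceptual content is modest once the two decompositions are merged; the point to watch is that a semi-orthogonal decomposition whose components are each generated by a single object is \emph{not} automatically an exceptional sequence, so the genuine work is establishing exceptionality of the individual objects. The only mildly delicate step is keeping track of the internal grading in the $q=d-1$ case for $\cA_\e$, where one must verify that the relevant degree-$0$ piece in \lemref{lem.HE}(2) vanishes; everything else is formal.
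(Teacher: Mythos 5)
Your proof is correct, and its skeleton --- Orlov's theorem (\thmref{thm.O}) combined with \lemref{lem.Amx} to get fullness and semi-orthogonality, followed by a separate verification that each individual object is exceptional --- is exactly the paper's; your treatment of the objects $\cA_\e(s)$ also coincides with the paper's (the paper just runs the \lemref{lem.HE} computation for each $s$ rather than reducing to $s=0$ by the autoequivalence). Where you genuinely diverge is the exceptionality of the $\cX_i$: the paper stays inside $\qgr A_\e$ and computes $\Ext^q_{\qgr A_\e}(\cX_i,\cX_i)$ case by case using Serre duality (\lemref{lem.HE}) together with the explicit Cohen--Macaulay computations of \lemref{lem.homx} and \lemref{lem.extx}, whereas you transport the whole question out of $\qgr A_\e$: full faithfulness of $\Phi$ plus the Buchweitz equivalence identify $\Hom_{\Db(\qgr A_\e)}(\cX_i,\cX_i[q])$ with $\Hom_{\uCM(A_\e)}(X_i,X_i[q])$, and the duality $\mathfrak{G}$ converts this to $\Ext^q_{C(A_\e)^{\op}}(\mathfrak{G}(X_i),\mathfrak{G}(X_i))$, which is trivial to evaluate since $C(A_\e)\cong M_\be(k)^\al$ is semisimple and $\mathfrak{G}(X_i)$ is simple. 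Your route is slicker for this lemma and bypasses \lemref{lem.homx} entirely; the only care needed is that citing \lemref{lem.Ami} for $\Phi(\upsilon X_i)\cong\cX_i$ presupposes the vanishing $\Hom_{\grmod A_\e}(X_i,A_\e(j))=0$ for $j\leq 0$, which comes from \lemref{lem.hil}(3) --- but this is checked inside the proof of \lemref{lem.Amx}, which you invoke anyway. What the paper's heavier, in-category computation buys is reusability: the same Serre-duality formulas and the Hom/Ext vanishing of \lemref{lem.homx} and \lemref{lem.extx} are precisely what is needed again in \lemref{lem.2b} to prove strongness, where your stable-category shortcut no longer applies because the relevant cross-Ext groups involve the objects $\cA_\e(t)$ and positive twists $\cX_l(t')$, which are not in the image of $\Phi$ restricted to the $\upsilon X_i$. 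Your closing observation --- that a semi-orthogonal decomposition with singly-generated components is not automatically an exceptional sequence, so exceptionality must be checked by hand --- is exactly the point, and matches the structure of the paper's argument.
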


\begin{proof}
For simplicity, we write $A:=A_\e$.
For $-n+3 \leq s \leq 0$ and $1 \leq i \leq n-3$,
\begin{align*}
&\End_{\qgr A}(\cA(s))\cong \End_{\grmod A}(A(s))\cong A_0\cong k \quad\text{by Lemma \ref{lem.HE} (1)},\\
&\Ext^q_{\qgr A}(\cA(s),\cA(s)) \cong \Ext^q_{\grmod A}(A(s),A(s))=0 \quad (1\leq q \leq n-3) \quad\text{by Lemma \ref{lem.HE} (1)},\\
&\Ext^{n-2}_{\qgr A}(\cA(s),\cA(s)) \cong D\Hom_{\grmod A}(A(s),A(s-n+2)) \cong D(A_{-n+2}) = 0 \quad\text{by Lemma \ref{lem.HE} (2)},\\
&\Ext^q_{\qgr A}(\cA(s),\cA(s)) = 0 \quad (q \leq 0 \; \text{or} \; n-1 \leq q) \quad\text{by Lemma \ref{lem.HE} (3)},\\
&\End_{\qgr A}(\cX_i)\cong \End_{\grmod A}(X_i) \cong k \quad\text{by Lemmas \ref{lem.HE} (1), \ref{lem.homx}},\\
&\Ext^q_{\qgr A}(\cX_i,\cX_i) \cong \Ext^q_{\grmod A}(X_i,X_i)=0 \quad (1\leq q \leq n-3) \quad\text{by Lemmas \ref{lem.HE} (1), \ref{lem.extx}},\\
&\Ext^{n-2}_{\qgr A}(\cX_i,\cX_i) \cong D\Hom_{\grmod A}(X_i, (X_i)_\nu(-n+2)) = 0 \quad\text{by Lemmas \ref{lem.HE} (2), \ref{lem.homx}},\\
&\Ext^q_{\qgr A}(\cX_i,\cX_i) = 0 \quad (q \leq 0 \; \text{or} \; n-1 \leq q) \quad\text{by Lemma \ref{lem.HE} (3)},
\end{align*}
so objects in  (\ref{eq.fes}) are exceptional.
Moreover, since we have semi-orthogonal decompositions
\begin{align*}
\Db(\qgr A) = \< \cA(-\ell+1),\dots, \cA(-1), \cA, \Phi\D_{\rm Sg}^{\Z}(A) \> \;\;\text{and}\;\;
\Phi\D_{\rm Sg}^{\Z}(A) =  \< \cX_1, \cX_2, \dots, \cX_\al \>
\end{align*}
by \thmref{thm.O} and \lemref{lem.Amx},  it follows that  (\ref{eq.fes}) is a full exceptional sequence.
\end{proof}

\begin{lem} \label{lem.2b}
Let $r=\mnull_{\F_2} \De_{\e}$ and $\al=2^r$.
Assume $n \geq 3$.
If $\cU= \cA_\e(-n+3) \oplus \cdots \oplus \cA_\e \oplus \cX_1 \oplus \cdots \oplus \cX_\al$, then
\begin{align*}
\Ext^q_{\qgr A_\e}(\cU, \cU \otimes_{\cA_\e}(\omega _{\cA_\e}^{-1})^{\otimes i})=0
\end{align*}
for $q \neq 0$ and $i \geq 0$.
\end{lem}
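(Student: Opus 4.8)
The plan is to reduce the twisted vanishing to an \emph{untwisted} one and then argue degree by degree. Write $A:=A_\e$, $\ell:=n-2$ and $d:=n-1$, so $A$ is AS-Gorenstein of dimension $d$ and Gorenstein parameter $\ell$, and $\om_\cA=\cA_\nu(-\ell)$ with $\nu$ the Nakayama automorphism. My first step is to prove the identification
\[
\cU\otimes_{\cA}(\om_{\cA}^{-1})^{\otimes i}\;\cong\;\cU(i(n-2))\qquad(i\ge 0).
\]
Tensoring with $\om_\cA$ sends $\cM$ to $\cM_\nu(-\ell)$; since $\nu$ is a graded algebra automorphism, $\cA(s)_\nu\cong\cA(s)$, and since twisting by $\nu$ preserves indecomposability, non-projectivity and the internal grading, it maps $\M_\e$ to itself. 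Hence the unordered family $\{(X_j)_\nu\}_j$ equals $\{X_j\}_j$, so $\cU_\nu\cong\cU$ and therefore $\cU\otimes_\cA\om_\cA\cong\cU(-\ell)$. As the internal shift commutes with $-\otimes_\cA\om_\cA$, inverting and iterating yields the display, and it remains to show $\Ext^q_{\qgr A}(\cU,\cU(m))=0$ for $q\ne0$ and every $m=i(n-2)\ge0$.

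Since $\cU$ and $\cU(m)$ are genuine objects of $\qgr A$, the Ext vanishes for $q<0$, so I may assume $q\ge1$. Decomposing $\cU$ into its summands reduces the problem to $\Ext^q_{\qgr A}(\cP,\cQ(m))$ with $P,Q\in\{A(s)\mid -n+3\le s\le0\}\cup\M_\e\subseteq\CM(A)$. For $q\ge d=n-1$ these vanish by \lemref{lem.HE}(3). For $1\le q\le d-2=n-3$, reading \lemref{lem.HE}(1) shift by shift gives $\Ext^q_{\qgr A}(\cP,\cQ(m))\cong\Ext^q_{\grmod A}(P,Q(m))$, which is zero in every case: $A(s)$ is projective; $\Ext^{\ge1}_A(X_j,A)=0$ since $X_j\in\CM(A)$; and when $P,Q\in\M_\e$, \lemref{lem.extx} would force $m=-q<0$, contradicting $m\ge0$.

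The critical degree is $q=d-1=n-2$, and this is where I expect the real work. Combining the Serre functor of \lemref{lem.Se} with \lemref{lem.HE}(1) in degree $0$, I would rewrite
\[
\Ext^{n-2}_{\qgr A}(\cP,\cQ(m))\;\cong\;D\Hom_{\grmod A}\bigl(Q,\,P_\nu(-\ell-m)\bigr),
\]
with $\ell+m=(i+1)(n-2)$. Now every summand $P$ of $\cU$ has $P_{<0}=0$, so $P_\nu(-(i+1)(n-2))$ lives in degrees $\ge(i+1)(n-2)$, whereas every $Q$ is generated in degrees $\le n-3$. Because $n-3<(i+1)(n-2)$, each generator of $Q$ is sent into a vanishing graded piece, so this Hom, and hence the Ext, is zero. (For $P,Q\in\M_\e$ one may instead quote \lemref{lem.homx} at the strictly negative shift $-(i+1)(n-2)$.)

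The main obstacle is Step~1: realizing that $-\otimes_\cA\om_\cA^{-1}$ acts on $\cU$ as nothing but the internal shift by $n-2$. This hinges on $\nu$ merely permuting the finite set $\cX_1,\dots,\cX_\al$ and on the Gorenstein parameter being exactly $n-2$; once this is in hand the remaining computations are bookkeeping with \lemref{lem.HE}, \lemref{lem.extx} and \lemref{lem.homx}. A secondary point to be careful about is the passage to the graded, shift-by-shift form of \lemref{lem.HE}, which is legitimate because those isomorphisms are isomorphisms of $\Z$-graded vector spaces and the Serre functor supplies object-wise dualities. The case $n=3$ needs no separate treatment: the middle range $1\le q\le n-3$ is then empty, and only $q=n-2=1$ and $q\ge2$ arise.
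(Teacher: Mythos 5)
Your proposal is correct, and its skeleton is the same as the paper's: reduce the $\omega_{\cA_\e}^{-1}$-twist to the internal shift by $i(n-2)$ via $\cU_\nu\cong\cU$, kill $q\geq n-1$ with \lemref{lem.HE}(3), kill $1\leq q\leq n-3$ with \lemref{lem.HE}(1) together with projectivity of the $A(s)$, maximal Cohen--Macaulayness of the $X_j$, and \lemref{lem.extx}, and handle $q=n-2$ by Serre duality in the form of \lemref{lem.HE}(2). Two localized divergences are worth recording. First, the paper asserts the isomorphism $\cU_{\nu^{-i}}\cong\cU$ with no comment; your justification --- twisting by $\nu$ is an exact, degree-preserving autoequivalence, hence fixes each $\cA(s)$ and permutes the finite set $\M_\e$ --- is exactly what is needed and fills a step the paper leaves implicit. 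Second, at the critical degree $q=n-2$ the paper runs four separate computations, one for each type of pair of summands, invoking the Hilbert-series facts of \lemref{lem.hil}(2),(3) and \lemref{lem.homx} to see that the dualized Hom spaces vanish; you instead give a single uniform argument: every summand of $\cU$ is concentrated in degrees $\geq 0$ and generated in degrees $\leq n-3$, so any degree-preserving map into $P_\nu(-(i+1)(n-2))$, which is concentrated in degrees $\geq (i+1)(n-2)>n-3$, annihilates all generators. This buys independence from \lemref{lem.hil}, whose role in the paper's proof is precisely to control the degree supports that you get for free from generation degrees; the paper's case-by-case version offers nothing extra beyond making each of the four vanishing statements explicit. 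Your two cautionary remarks --- that \lemref{lem.HE} must be read shift-by-shift as an isomorphism of graded pieces, and that for $n=3$ the middle range $1\leq q\leq n-3$ is empty --- are both points the paper also relies on silently, so your write-up is, if anything, slightly more careful than the original.
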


\begin{proof}
For simplicity, we write $A:=A_\e$.
Since $-\otimes_{\cA}\omega _{\cA}^{-1} \cong (-)_{\nu^{-1}}(n-2)$,
we have $\cU \otimes_{\cA}(\omega _{\cA}^{-1})^{\otimes i} \cong \cU_{\nu^{-i}}(i(n-2)) \cong \cU(i(n-2))$ where $\cU_{\nu^{-i}}= \pi(U_{\nu^{-i}})$,
so it is enough to show that
\begin{align*}
&\Ext^q_{\qgr A}(\cA(s),\cA(t)) = 0 \quad (-n+3 \leq s \leq 0,\; -n+3 \leq t),\\
&\Ext^q_{\qgr A}(\cA(s),\cX_j(t')) = 0 \quad (-n+3 \leq s \leq 0,\; 1 \leq j \leq \al,\; 0 \leq t'),\\
&\Ext^q_{\qgr A}(\cX_j,\cA(t)) = 0 \quad (1 \leq j \leq \al,\; -n+3 \leq t),\\
&\Ext^q_{\qgr A}(\cX_j,\cX_{l}(t')) = 0 \quad (1 \leq j, l \leq \al,\; 0 \leq t').
\end{align*}
If $q \geq n-1$, then these are true by \lemref{lem.HE} (3). Furthermore, one can calculate
\begin{align*}
\Ext^q_{\qgr A}(\cA(s),\cA(t))&\cong \Ext^q_{\grmod A}(A(s),A(t))=0 \quad (1\leq q \leq n-3) \quad\text{by Lemma \ref{lem.HE} (1)},\\
\Ext^{n-2}_{\qgr A}(\cA(s),\cA(t))&\cong D\Hom_{\grmod A}(A(t),A(s-n+2))\\
&\cong D(A_{s-t-n+2})=0\quad\text{by Lemma \ref{lem.HE} (2)},\\
\Ext^q_{\qgr A}(\cA(s),\cX_j(t'))&\cong \Ext^q_{\grmod A}(A(s),X_j(t'))=0 \quad (1\leq q \leq n-3)\quad\text{by Lemma \ref{lem.HE} (1)},\\
\Ext^{n-2}_{\qgr A}(\cA(s),\cX_j(t'))&\cong D\Hom_{\grmod A}(X_j(t'),A(s-n+2))\\
&\cong D(\Hom_A(X_j,A)_{s-t'-n+2})=0\quad\text{by Lemmas \ref{lem.HE} (2), \ref{lem.hil} (3)},\\
\Ext^q_{\qgr A}(\cX_j,\cA(t))&\cong \Ext^q_{\grmod A}(X_j, A(t))=0 \quad (1\leq q \leq n-3)\quad\text{by Lemma \ref{lem.HE} (1)},\\
\Ext^{n-2}_{\qgr A}(\cX_j,\cA(t))&\cong D\Hom_{\grmod A}(A(t),(X_j)_\nu(-n+2))\\
&\cong D(((X_j)_\nu)_{-t-n+2})=0\quad\text{by Lemmas \ref{lem.HE} (2), \ref{lem.hil} (2)},\\
\Ext^q_{\qgr A}(\cX_j,\cX_{l}(t'))&\cong  \Ext^q_{\grmod A}(X_j,X_{l}(t'))=0 \quad (1\leq q \leq n-3)\quad\text{by Lemmas \ref{lem.HE} (1), \ref{lem.extx}},\\
\Ext^{n-2}_{\qgr A}(\cX_j,\cX_{l}(t'))&\cong D\Hom_{\grmod A}(X_{l}(t'),(X_j)_\nu(-n+2))\\
&\cong D\Hom_{\grmod A}(X_{l},(X_j)_\nu(-t'-n+2))=0\quad\text{by Lemmas \ref{lem.HE} (2), \ref{lem.homx}}.
\end{align*}
Thus the desired result holds. 
\end{proof}

\subsection{Ampleness}
In this subsection, we show that the full exceptional sequence (\ref{eq.fes}) induces
an ample pair in the sense of Artin and Zhang \cite{AZ}. 

Let $\sC$ be a $k$-linear abelian category.
We call $(L, \si)$ an \emph{algebraic pair} for $\sC$ if it consists of
an object $L \in \sC$ and a $k$-linear autoequivalence $\si \in \Aut_k \sC$.

\begin{dfn}    
An algebraic pair $(L, \si)$ for a $k$-linear abelian category $\sC$ is \emph{ample} if 
\begin{enumerate}
\item[(A1)] for every object $M \in \sC$, there exists an epimorphism $\bigoplus_{j=1}^p \si^{-i_j}L \to M$ in $\sC$ for some $i_1, \dots, i_p \in \N$, and  
\item[(A2)] for every epimorphism $\phi: M \to N$ in $\sC$, there exists $m\in \Z$ such that 
$$\Hom_{\sC}(\si^{-i}L, \phi):\Hom_{\sC}(\si^{-i}L, M)\to \Hom_{\sC}(\si^{-i}L, N)$$
is surjective for every $i\geq m$.  
\end{enumerate}
\end{dfn}

\begin{lem} \label{lem.2c}
Let $r=\mnull_{\F_2} \De_{\e}$ and $\al=2^r$.
Assume $n \geq 3$.
If $\cU= \cA_\e(-n+3) \oplus \cdots \oplus \cA_\e \oplus \cX_1 \oplus \cdots \oplus \cX_\al$, then
$(\cU, -\otimes_{\cA_\e}\omega _{\cA_\e}^{-1})$ is ample for $\qgr A$.
\end{lem}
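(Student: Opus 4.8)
The plan is to verify the two defining conditions (A1) and (A2) of ampleness for the pair $(\cU,\si)$, where throughout I write $A:=A_\e$ and $\si:=-\otimes_{\cA}\om_{\cA}^{-1}$; note that $\si$ is an exact autoequivalence of the abelian category $\qgr A$. As computed in the proof of \lemref{lem.2b}, $\si\cong(-)_{\nu^{-1}}(n-2)$, so $\si^{-i}\cong(-)_{\nu^{i}}(-i(n-2))$ for $i\geq 0$. Since $\nu$ is a graded automorphism, $A_{\nu^{i}}\cong A$ in $\grmod A$, and therefore $\si^{-i}(\cA(s))\cong\cA(s-i(n-2))$ for all $s\in\Z$ and $i\geq 0$. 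The key preliminary observation is that, as $s$ runs over $\{-n+3,\dots,0\}$ and $i$ over $\N$, the integers $s-i(n-2)$ run over every non-positive integer exactly once; hence each $\cA(j)$ with $j\leq 0$ appears as a direct summand of some $\si^{-i}\cU$.

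For (A1), write $\cM=\pi M$ with $M\in\grmod A$. Since $M_{<N}$ is finite-dimensional, the truncation $M_{\geq N}$ is isomorphic to $M$ in $\qgr A$, so after replacing $M$ by $M_{\geq N}$ for $N\gg 0$ I may assume $M$ is generated in non-negative degrees. A finite generating set then gives an epimorphism $\bigoplus_{j}A(-d_{j})\to M$ with all $d_{j}\geq 0$; applying the exact functor $\pi$ yields an epimorphism $\bigoplus_{j}\cA(-d_{j})\to\cM$. By the observation above, each $\cA(-d_{j})$ is a direct summand of some $\si^{-i_{j}}\cU$, and composing the projection $\si^{-i_{j}}\cU\to\cA(-d_{j})$ with this map produces the desired epimorphism $\bigoplus_{j}\si^{-i_{j}}\cU\to\cM$.

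For (A2), let $\phi\colon\cM\to\cN$ be an epimorphism with kernel $\cL$. From the short exact sequence $0\to\cL\to\cM\to\cN\to 0$ we see that $\Hom_{\qgr A}(\si^{-i}\cU,\phi)$ is surjective whenever $\Ext^{1}_{\qgr A}(\si^{-i}\cU,\cL)=0$, so it suffices to prove that for every $\cV\in\qgr A$ one has $\Ext^{1}_{\qgr A}(\si^{-i}\cU,\cV)=0$ for $i\gg 0$. Using (A1) I would build an exact sequence $\cP_{n-3}\to\cdots\to\cP_{0}\to\cV\to 0$ in $\qgr A$ in which each $\cP_{t}=\bigoplus_{k}\si^{-j^{(t)}_{k}}\cU$ is a finite direct sum, and set $\cV_{0}=\cV$ and $\cV_{t+1}=\ker(\cP_{t}\to\cV_{t})$. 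Dimension shifting along $0\to\cV_{t+1}\to\cP_{t}\to\cV_{t}\to 0$ produces injections
\[
\Ext^{1}_{\qgr A}(\si^{-i}\cU,\cV)\hookrightarrow\Ext^{2}_{\qgr A}(\si^{-i}\cU,\cV_{1})\hookrightarrow\cdots\hookrightarrow\Ext^{n-1}_{\qgr A}(\si^{-i}\cU,\cV_{n-2}),
\]
provided the intervening groups $\Ext^{t+1}_{\qgr A}(\si^{-i}\cU,\cP_{t})$ vanish. Since $\si$ is an autoequivalence, $\Ext^{t+1}_{\qgr A}(\si^{-i}\cU,\si^{-j}\cU)\cong\Ext^{t+1}_{\qgr A}(\cU,\si^{\,i-j}\cU)$, which vanishes by \lemref{lem.2b} as soon as $i\geq j$ (as $t+1\neq 0$). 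Because each $\cP_{t}$ is a finite sum, only finitely many twists $j^{(t)}_{k}$ occur, so taking $i$ larger than all of them kills every intermediate term; the final group vanishes since $\gldim(\qgr A)=n-2<n-1$ (finite by \thmref{thm.higuey}). Thus $\Ext^{1}_{\qgr A}(\si^{-i}\cU,\cV)=0$ for $i\gg 0$, which gives (A2).

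The main difficulty lies in (A2), and specifically in securing the vanishing uniformly for a single threshold $i\gg 0$. This is exactly where \lemref{lem.2b} is used: it supplies the positivity $\Ext^{\neq 0}_{\qgr A}(\cU,\si^{m}\cU)=0$ for $m\geq 0$, which together with the finiteness of $\gldim(\qgr A)$ makes the dimension shifting terminate after $n-2$ steps, while the noetherianity of $\qgr A$ guarantees that each $\cP_{t}$ is a finite direct sum so that only finitely many twists must be dominated.
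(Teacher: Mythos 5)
Your proof is correct, but your treatment of (A2) takes a genuinely different route from the paper. For (A1) both arguments are the same: every $\cA(-j)$ with $j\in\N$ occurs as a direct summand of some $\si^{-i}\cU\cong\cU(-i(n-2))$, and every object of $\qgr A_\e$ is a quotient of a finite direct sum of such twists. For (A2), however, the paper works at the level of graded modules: it lifts the kernel $\cK$ to $K\in\grmod A_\e$, invokes \cite[Lemma 5.7]{Ucm} (a consequence of the isolated-singularity property) to get $\dim_k\Ext^1_{A_\e}(X_j,K)<\infty$, and then \cite[Corollary 7.3 (2)]{AZ} to conclude that $\Ext^1_{\cA_\e}(\cX_j,\cK)$ and $\Ext^1_{\cA_\e}(\cA_\e,\cK)$ are right bounded, whence $\Ext^1_{\qgr A_\e}(\cU(i(-n+2)),\cK)=0$ for $i\gg 0$. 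You instead stay entirely inside $\qgr A_\e$: using (A1) you resolve the kernel by finite direct sums of twists $\si^{-j}\cU$, dimension-shift using the vanishing $\Ext^{q}_{\qgr A_\e}(\cU,\si^{m}\cU)=0$ for $q\neq 0$, $m\geq 0$ from \lemref{lem.2b}, and terminate the chain of injections with $\gldim(\qgr A_\e)=n-2$ (which the paper establishes from \thmref{thm.higuey} and Serre duality). Your argument is formally complete: the identification $\Ext^{q}(\si^{-i}\cU,\si^{-j}\cU)\cong\Ext^{q}(\cU,\si^{i-j}\cU)$ is valid since $\si$ is an exact autoequivalence, the long exact sequences and dimension shifting for Yoneda Ext hold in the abelian category $\qgr A_\e$, and the threshold can be taken to dominate the finitely many twists appearing in the resolution. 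What the paper's approach buys is brevity (given the two external citations) and an explicit right-boundedness statement; what yours buys is self-containedness and generality: it shows that (A2) is a formal consequence of (A1), the positivity condition of \lemref{lem.2b}, and finiteness of global dimension, with no further input about maximal Cohen-Macaulay modules. The only cosmetic imprecision is attributing the finiteness of each $\cP_t$ to ``noetherianity of $\qgr A$''; strictly it is condition (A1) itself (proved via noetherianity of $A_\e$) that supplies finite direct sums.
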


\begin{proof} 
For simplicity, we write $A:=A_\e$.
First, note that
$\cU \otimes_{\cA} (\omega _{\cA}^{-1})^{\otimes -i} \cong \cU_{\nu^i}(i(-n+2)) \cong \cU(i(-n+2))$
for any $i \in \Z$.
Since every $\cA(-j)$ with $j \in \N$ is contained in
$\cU \otimes_{\cA} (\omega _{\cA}^{-1})^{\otimes -i} \cong \cU(i(-n+2))$ as a direct summand for some $i \in \N$,
we see that the condition (A1) is satisfied. 

Let $\phi: \cM \to \cN$ be an epimorphism in $\qgr A$. It gives a short exact sequence
\begin{align} \label{eq.a.ses}
0 \to \cK \to \cM \to \cN \to 0
\end{align}
in $\qgr A$. Then we take $K \in \grmod A$ such that $\pi K = \cK$.
By \cite[Lemma 5.7]{Ucm}, $\dim_k \Ext^1_A(X_j, K) < \infty$ for all $1\leq j \leq \al$,
so it follows from \cite[Corollary 7.3 (2)]{AZ} that $\Ext^1_{\cA}(\cX_j, \cK)$ and $\Ext^1_{\cA}(\cA, \cK)$ are right bounded for all $1\leq j \leq \al$.
Thus there exists $m \in \N$ such that 
\begin{align*}
\Ext^1_{\qgr A}(\cX_j(-i), \cK) = \Ext^1_{\cA}(\cX_j, \cK)_{i}=0 \;\; \text{and}\;\; \Ext^1_{\qgr A}(\cA(-i), \cK)= \Ext^1_{\cA}(\cA, \cK)_{i}=0
\end{align*}
for all $1\leq j \leq \al$ and all $i \geq m$. It follows that
$\Ext^1_{\qgr A}(\cU(-i), \cK)=0$
for all $i \geq m$.

For any $i \geq m$, applying $\Hom_{\cA}(\cU(i(-n+2)),-)$ to (\ref{eq.a.ses}), we have an exact sequence
\[ \xymatrix@C=1pc@R=1pc{
\Hom_{\cA}(\cU(i(-n+2)),\cM) \ar[r]^{\psi} &\Hom_{\cA}(\cU(i(-n+2)),\cN) \ar[r] &\Ext^1_{\cA}(\cU(i(-n+2)),\cK).
}\]
Since $-i(-n+2)=i(n-2) \geq i\geq m$, we have $\Ext^1_{\cA}(\cU(i(-n+2)),\cK)=0$, so the map $\psi = \Hom_{\cA}(\cU(i(-n+2)), \phi)$ is surjective. Hence the condition (A2) is satisfied.
\end{proof}

\subsection{Main Result}
In this subsection, we give the proof of our main result (\thmref{thm.main}).
In the context of noncommutative algebraic geometry, Minamoto \cite{Min} introduced a nice class of finite-dimensional algebras of finite global dimension.

\begin{dfn}
Let $R$ be a finite-dimensional algebra and $L$ a two-sided tilting complex of $R$.
\begin{enumerate}
\item{} We say that $L$ is \emph{extremely ample} if 
\begin{enumerate}
\item
$h^q(L^{\Lotimes i})=0$ for all $q\neq 0$ and all $i\geq 0$, and 
\item
$(\D^{L, \geq 0}, \D^{L, \leq 0})$ is a t-structure on $\Db(\mod R)$ where
\begin{align*}
\D^{L, \geq 0} & :=\{M\in \Db(\mod R)\mid h^q(M\Lotimes _R L^{\Lotimes i})=0 \; \textnormal { for all } q<0, i\gg 0\}, \\
\D^{L, \leq 0} & :=\{M\in \Db(\mod R)\mid h^q(M\Lotimes _R L^{\Lotimes i})=0 \; \textnormal { for all } q>0, i\gg 0\}.
\end{align*}
\end{enumerate}
\item Assume that $\gldim R=m<\infty$. Then the \emph{canonical module} of $R$ is defined as the two-sided tilting complex
$\om_R := DR[-m]$.  
\item We say that $R$ is \emph{extremely Fano} if $\om _R^{-1}:=\RHom_R(\om _R, R)$ is extremely ample.
\end{enumerate}
\end{dfn}

For an extremely Fano algebra $R$ of $\gldim R =m$, the \emph{$(m+1)$-preprojective algebra} of $R$
is defined as the tensor algebra 
\[ \Pi R:=T_R(\Ext_R^m(DR, R))\]
of the $R$-$R$ bimodule $\Ext_R^m(DR, R)$.

For example, the path algebra $kQ$ of a finite acyclic quiver $Q$ of infinite-representation type is an extremely Fano algebra of global dimension $1$.
In this case, $\Pi kQ$ is isomorphic to the usual preprojective algebra of $kQ$ (see \cite{Min}).

We here recall the definition of a Calabi-Yau algebra.

\begin{dfn} 
A locally finite graded algebra $A$ is called \emph{(bimodule) Calabi-Yau of dimension $d$ and Gorenstein parameter $\ell$} if
$A \in \perf^{\Z}A^{\en}$ and $\RHom_{A^{\en}}(A, A^{\en}) \cong {A}(\ell)[-d]$ in $\D(\GrMod A^{\en})$.
\end{dfn}

The following result provides a strong connection between
noncommutative algebraic geometry and
representation theory of finite-dimensional algebras.

\begin{thm}[{\cite[Theorem 4.2, Theorem 4.12 (1)]{MM}, \cite[Theorem 4.36]{HIO}}]
If a finite-dimensional algebra $R$ is extremely Fano of global dimension $d$, then $\Pi R$ is a coherent Calabi-Yau algebra of dimension $d+1$ and Gorenstein parameter $1$.
Conversely, if a graded algebra $A$ is coherent Calabi-Yau of dimension $d+1$ and Gorenstein parameter $1$, then $A_0$ is an extremely Fano algebra of dimension $d$.
\end{thm}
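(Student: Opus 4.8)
The plan is to prove the two implications separately, in both cases realizing $\Pi R$ (resp.\ $A$) as a Calabi--Yau completion; since $k$ is algebraically closed, any finite-dimensional algebra of finite global dimension is homologically smooth, and I would use this throughout. For the forward direction I would first unwind the bimodule $E:=\Ext^d_R(DR,R)$. Writing $L=\om_R^{-1}=\RHom_R(DR,R)[d]$, extreme ampleness condition (a) says that $L$ and all of its tensor powers are concentrated in cohomological degree $0$; hence there are no higher $\Tor$'s between tensor factors and $\Pi R=T_R(E)=\bigoplus_{i\geq 0}E^{\otimes_R i}$ is an honest graded algebra with $E$ in degree $1$, which already forces the Gorenstein parameter to be $1$. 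The key step is then to identify $E$ with the inverse dualizing bimodule: I would show that the vanishing built into the t-structure condition (b) is exactly the assertion that $\Ext^i_{R^{\en}}(R,R^{\en})=0$ for $i\neq d$ while $\Ext^d_{R^{\en}}(R,R^{\en})\cong E$. With this identification $\Pi R$ becomes the $(d+1)$-Calabi--Yau completion of $R$ in the sense of Keller, and I would build a length-$(d+1)$ graded bimodule resolution of $\Pi R$ by splicing the length-$d$ bimodule resolution $P_\bullet\to R$ (which exists because $\pd_{R^{\en}}R=d$) with the canonical tensor-algebra presentation
\[
0\to \Pi R\otimes_R E\otimes_R \Pi R\to \Pi R\otimes_R \Pi R\to \Pi R\to 0 .
\]
Applying $\RHom_{(\Pi R)^{\en}}(-,(\Pi R)^{\en})$ to this resolution should return $\Pi R(1)[-(d+1)]$, giving the bimodule Calabi--Yau property of dimension $d+1$ and Gorenstein parameter $1$; coherence is then immediate since every graded piece $E^{\otimes_R i}$ is finite-dimensional.

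For the converse I would start from the grading. Gorenstein parameter $1$ places the generator of the canonical bimodule in degree $1$, and reading this off the Calabi--Yau bimodule resolution forces $A$ to be generated over $A_0$ in degree $1$, hence $A\cong T_{A_0}(A_1)$. Restricting that resolution to its degree-$0$ part (equivalently applying $-\otimes_A A_0$) should extract a finite projective $A_0$-bimodule resolution of length $d$; this simultaneously yields $\gldim A_0=d$, homological smoothness of $A_0$, the vanishing $\Ext^i_{A_0^{\en}}(A_0,A_0^{\en})=0$ for $i\neq d$, and $\Ext^d_{A_0^{\en}}(A_0,A_0^{\en})\cong A_1$. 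Dualizing the last isomorphism gives $\om_{A_0}^{-1}$ concentrated in degree $0$, which is condition (a). To recover condition (b) I would use the Gorenstein/Calabi--Yau data to set up a Beilinson--Orlov equivalence $\Db(\qgr A)\simeq \Db(\mod A_0)$ under which the internal shift $(1)$ corresponds to $-\Lotimes_{A_0}\om_{A_0}^{-1}$, and then transport the ampleness of the shift functor on $\qgr A$ across it to produce the required t-structure $(\D^{L,\geq 0},\D^{L,\leq 0})$, concluding that $A_0$ is extremely Fano of dimension $d$ and that $\Pi A_0\cong A$.

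The hard part will be the translation between the geometric ampleness/Fano conditions and the purely homological Calabi--Yau condition, rather than the (routine) homological bookkeeping. Precisely, the delicate equivalence is that, for a homologically smooth finite-dimensional algebra, the t-structure condition (b) is the same as concentration of the inverse dualizing complex in the single degree $d$ (the ``$d$-representation-infinite'' property); and, dually, that condition (b) can be reconstructed from the abstract Calabi--Yau property via the Serre functor on $\Db(\mod A_0)$. This is exactly the core of the Minamoto--Mori and Herschend--Iyama--Oppermann arguments, and it carries essentially all the content of the theorem.
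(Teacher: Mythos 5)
This statement is not proved in the paper at all: it is quoted from \cite[Theorems 4.2 and 4.12(1)]{MM} and \cite[Theorem 4.36]{HIO}, so your proposal has to be measured against those arguments, and measured that way it contains two genuine errors, both in the forward direction. The first is your ``key step''. For a finite-dimensional algebra $R$ of finite global dimension there is a standard bimodule isomorphism $\RHom_{R^{\en}}(R,R^{\en})\cong \RHom_R(DR,R)$, so the concentration statement you attribute to condition (b) --- namely $\Ext^i_{R^{\en}}(R,R^{\en})=0$ for $i\neq d$ and $\Ext^d_{R^{\en}}(R,R^{\en})\cong E$ --- is exactly the $i=1$ instance of condition (a); condition (a) for all $i\geq 0$ is the $d$-representation-infinite property of \cite{HIO}. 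It is not condition (b), which is a t-structure requirement on \emph{all} objects of $\Db(\mod R)$ and carries genuinely additional information. As a result, your forward argument only ever uses condition (a): from (a) you correctly get that $\Pi R$ is an honest locally finite graded algebra, and (via Keller's Calabi--Yau completion, a legitimate alternative to the explicit graded bimodule resolution constructed in \cite{HIO}) that it is bimodule Calabi--Yau of dimension $d+1$ and Gorenstein parameter $1$.

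The second error is fatal: ``coherence is then immediate since every graded piece $E^{\otimes_R i}$ is finite-dimensional'' is false. Local finiteness of a graded algebra does not imply graded coherence, and indeed whether $\Pi R$ is graded coherent for an arbitrary $d$-representation-infinite algebra $R$ is a well-known open problem raised in \cite{HIO}. Coherence is precisely the part of the conclusion that requires condition (b): Minamoto and Minamoto--Mori obtain it by using the t-structure of (b) to produce an equivalence $\Db(\qgr \Pi R)\cong \Db(\mod R)$ identifying $\qgr \Pi R$ with the heart, and coherence of $\Pi R$ is extracted from that. Since your argument never actually invokes (b) and claims coherence for free, it would ``prove'' that every $d$-representation-infinite algebra has a coherent higher preprojective algebra, which is open. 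Your converse direction is structurally much closer to the cited proofs (tensor-algebra structure forced by Gorenstein parameter $1$, degree-zero truncation of the Calabi--Yau resolution to compute $\gldim A_0$ and identify $A_1$, then transporting ampleness across an Orlov/Beilinson-type equivalence to recover (b)), but there too the $\Tor$-vanishing needed for condition (a) and the construction of the equivalence $\Db(\qgr A)\cong\Db(\mod A_0)$ from the coherence hypothesis are the actual work, not routine bookkeeping.
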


Moreover, we have the following result, which is an immediate application of the results in \cite{MUq}.

\begin{thm} \label{thm.MUq}
Let $A$ be a noetherian AS-Gorenstein algebra of dimension $d\geq 1$.
Suppose that
\begin{enumerate}
\item[(\Rnum{1})] $\gldim(\qgr A)<\infty$, and
\item[(\Rnum{2})] a sequence of objects $(\cE_0,\dots,\cE_{\ell-1})$ in $\qgr A$ satisfies
\begin{enumerate}
\item $(\cE_0,\dots,\cE_{\ell-1})$ is a full exceptional sequence in $\Db(\qgr A)$,
\item $\Ext^q_{\cA}(\cU, \cU \otimes_{\cA}(\omega _{\cA}^{-1})^{\otimes i})=0$ for all $q\neq 0$ and all $i\geq 0$, and
\item $(\cU, -\otimes_{\cA}\omega_{\cA}^{-1})$ is ample for $\qgr A$,
\end{enumerate}
where $\cU:= \cE_0 \oplus \cdots \oplus \cE_{\ell-1}$.
\end{enumerate}
Then the following assertions hold.
\begin{enumerate}
\item $R:=\End_{\cA}(\cU)$ is an extremely Fano algebra of global dimension $d-1$.
\item There exists an equivalence of triangulated categories $\Db(\qgr A) \cong \Db(\mod R)$.
\item $\Pi R$ is a right noetherian graded Calabi-Yau algebra of dimension $d$
(in particular, an AS-regular algebra over $R$ in the sense of Minamoto and Mori \cite{MM}).
\item There exists an equivalence $\qgr A \cong \qgr \Pi R$.
\end{enumerate}
\end{thm}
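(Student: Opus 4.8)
The plan is to verify that the hypotheses (I) and (II) are exactly those under which the machinery of \cite{MUq} applies, and then to read off the four assertions. As a preliminary, note that since $A$ is noetherian AS-Gorenstein of dimension $d$ with $\gldim(\qgr A)<\infty$, the remark following \lemref{lem.HE} gives $\gldim(\qgr A)=d-1$, and \lemref{lem.Se} equips $\Db(\qgr A)$ with the Serre functor $-\Lotimes_{\cA}\omega_{\cA}[d-1]$. Consequently the autoequivalence $\si:=-\otimes_{\cA}\omega_{\cA}^{-1}$ occurring in (II) is, up to the shift $[d-1]$, the inverse Serre functor; this is the structural reason it is the correct twist to use in the ample pair, and it is what makes the representation-theoretic and geometric pictures match.

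First I would prove (2). Fullness in (II)(a) says that $\cU=\cE_0\oplus\cdots\oplus\cE_{\ell-1}$ generates $\Db(\qgr A)$, while the case $i=0$ of (II)(b) gives $\Ext^q_{\cA}(\cU,\cU)=0$, and in particular $\Ext^q_{\qgr A}(\cU,\cU)=0$, for every $q\neq0$. Hence $\cU$ is a tilting object, and standard tilting theory yields a triangle equivalence $\Db(\qgr A)\cong\Db(\mod R)$ with $R=\End_{\cA}(\cU)$. Since the right-hand side then inherits a Serre functor, $R$ has finite global dimension, and its value $\gldim R=d-1$ coincides with the smooth (Serre) dimension $d-1$ of $\qgr A$, as carried out in \cite{MUq}. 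This settles (2) together with the global-dimension part of (1).

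Next I would promote $R$ to an extremely Fano algebra. Across the equivalence of (2), the twist $\si=-\otimes_{\cA}\omega_{\cA}^{-1}$ corresponds to $-\Lotimes_R\omega_R^{-1}$, where $\omega_R=DR[-(d-1)]$ and $\omega_R^{-1}=\RHom_R(\omega_R,R)$. Under this dictionary, condition (II)(b) becomes $h^q((\omega_R^{-1})^{\Lotimes i})=0$ for all $q\neq0$ and $i\geq0$, which is precisely part (a) of extreme ampleness; and the Artin--Zhang ampleness (II)(c) of the pair $(\cU,\si)$ converts into the t-structure condition (b) of extreme ampleness. \emph{This translation of geometric ampleness into the representation-theoretic t-structure condition is the step I expect to be the main obstacle}, since it is the one place where the ample-pair theory of \cite{AZ} is genuinely used rather than formal manipulation; it is exactly what \cite{MUq} supplies. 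Granting it, $\omega_R^{-1}$ is extremely ample, so $R$ is extremely Fano of global dimension $d-1$, which finishes (1).

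Finally I would deduce (3) and (4). Once $R$ is extremely Fano of global dimension $d-1$, the Minamoto--Mori and Herschend--Iyama--Oppermann theorem (\cite{MM}, \cite{HIO}) recalled above shows that $\Pi R=T_R(\Ext^{d-1}_R(DR,R))$ is a Calabi--Yau algebra of dimension $d$ and Gorenstein parameter $1$, hence an AS-regular algebra over $R$. For (4), the ample pair $(\cU,\si)$ lets me reconstruct the abelian category through the Artin--Zhang theorem \cite{AZ} as $\qgr A\cong\qgr B$, where $B=\bigoplus_{i\geq0}\Hom_{\qgr A}(\cU,\si^{i}\cU)$. Transporting to $\Db(\mod R)$ and using the degree-$0$ concentration of $(\omega_R^{-1})^{\Lotimes i}$ furnished by (II)(b), I identify $B_i\cong h^0((\omega_R^{-1})^{\Lotimes i})$ with $B_1\cong\Ext^{d-1}_R(DR,R)$, so that $B\cong T_R(\Ext^{d-1}_R(DR,R))=\Pi R$. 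This gives $\qgr A\cong\qgr\Pi R$, and the right noetherianity of $\Pi R$ follows from this reconstruction (noetherianity of $A$ passing through the ample pair) as in \cite{MUq}, completing (3) and (4).
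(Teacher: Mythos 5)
Your proposal is correct and takes essentially the same approach as the paper: the paper's proof likewise reduces everything to the machinery of \cite{MUq}, noting $\gldim(\qgr A)=d-1$, verifying via (\Rnum{2}a) and (\Rnum{2}b) that $\cU$ is a regular tilting object in the sense of \cite[Definition 3.9]{MUq}, and then citing \cite[Theorem 3.11, Theorem 4.1, Corollary 4.3]{MUq} --- which is precisely the content (tilting equivalence, translation of ampleness into the extremely Fano condition, and the Artin--Zhang reconstruction giving $\qgr A \cong \qgr \Pi R$ together with right noetherianity) that you unfold. The step you flag as the main obstacle is indeed the one the paper delegates wholesale to \cite{MUq}.
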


\begin{proof}
First, note that $\gldim (\qgr A)=d-1$.
By (\Rnum{2}a) and (\Rnum{2}b), one can verify that $\cU$ is a regular tilting object of $\Db(\qgr A)$ in the sense of \cite[Definition 3.9]{MUq}.
Hence the assertions follow from \cite[Theorem 3.11, Theorem 4.1, Corollary 4.3]{MUq}.
\end{proof}

We are now ready to turn to the main result of this paper.

\begin{thm} \label{thm.main}
Let $r = \mnull_{\F_2} \De_{\e}, \al =2^r, \be = 2^{\frac{n-r-1}{2}}$.
Assume that $n\geq 3$.
Then $\Db(\qgr A_\e)$ has a full strong exceptional sequence of the form
\[
\Db(\qgr A_\e)=
\< \cA_\e(-n+3), \dots, \cA_\e(-1), \cA_\e, \cX_1, \cX_2, \dots, \cX_\al \>.
\]
Moreover, if $\cU= \cA_\e(-n+3) \oplus \cdots \oplus \cA_\e \oplus \cX_1 \oplus \cdots \oplus \cX_\al$, then 
the following statements hold.
\begin{enumerate}
\item $\La_\e := \End_{\qgr A_\e}{\cU}$ is an extremely Fano algebra of $\gldim \La_\e = n-2$.
\item There exists an equivalence of triangulated categories $\Db(\qgr A_\e) \cong \Db(\mod \La_\e)$.
\item $\Pi \La_\e$ is a right noetherian graded Calabi-Yau algebra of dimension $n-1$.
\item There exists an equivalence $\qgr A_\e \cong \qgr \Pi\La_\e$.
\end{enumerate}
\end{thm}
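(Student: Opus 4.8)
The plan is to observe that essentially all of the substantive work has already been done in \lemref{lem.2a}, \lemref{lem.2b}, and \lemref{lem.2c}, so that the proof reduces to feeding these inputs into the abstract machine \thmref{thm.MUq}, together with one short observation promoting ``full exceptional'' to ``full strong exceptional.'' Accordingly, I would first recall from the preliminaries that, for $n \geq 3$, the algebra $A_\e$ is a noetherian AS-Gorenstein algebra of dimension $d = n-1 \geq 2$ and Gorenstein parameter $n-2$; in particular $d \geq 1$, so \thmref{thm.MUq} is applicable with $A = A_\e$.

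Next I would verify the hypotheses of \thmref{thm.MUq} one by one. Condition (\Rnum{1}), namely $\gldim(\qgr A_\e) < \infty$, holds by \thmref{thm.higuey}. Condition (\Rnum{2}a), that the sequence $(\cA_\e(-n+3), \dots, \cA_\e(-1), \cA_\e, \cX_1, \dots, \cX_\al)$ is a full exceptional sequence in $\Db(\qgr A_\e)$, is exactly \lemref{lem.2a}. Condition (\Rnum{2}b) is \lemref{lem.2b}, and condition (\Rnum{2}c) is \lemref{lem.2c}. With all hypotheses in place, the four conclusions of \thmref{thm.MUq} translate directly into assertions (1)--(4): the value $d-1 = n-2$ gives $\gldim \La_\e = n-2$ in (1), the equivalences in (2) and (4) are immediate, and $d = n-1$ gives the Calabi--Yau dimension of $\Pi\La_\e$ in (3).

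It then remains only to see that the full exceptional sequence is strong in the sense of \dfnref{dfn.res}~(4), i.e.\ that $\Hom_{\Db(\qgr A_\e)}(\cU, \cU[q]) \cong \Ext^q_{\qgr A_\e}(\cU, \cU) = 0$ for every $q \neq 0$. This is the $i = 0$ instance of \lemref{lem.2b}, since $\cU \otimes_{\cA_\e}(\omega_{\cA_\e}^{-1})^{\otimes 0} \cong \cU$. Combined with \lemref{lem.2a}, this yields the displayed full strong exceptional sequence and completes the proof.

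I do not expect a genuine obstacle at this final stage, precisely because the conceptually hard content has already been discharged: the computation $C(A_\e) \cong M_\be(k)^\al$ in \lemref{lem.c}, the resulting description of the indecomposable maximal Cohen--Macaulay modules and their linear resolutions in \lemref{lem.x}, and the Ext and ampleness estimates built on top of them are what make the hypotheses of \thmref{thm.MUq} checkable. If anything here requires care, it is only the numerical bookkeeping---confirming that $d-1 = n-2$ and $d = n-1$ are transported correctly through \thmref{thm.MUq}---and the minor point that the $i=0$ case of \lemref{lem.2b} indeed records exactly the vanishing needed for strongness.
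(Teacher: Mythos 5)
Your proposal is correct and follows exactly the paper's own proof: both verify hypotheses (\Rnum{1}) and (\Rnum{2}a)--(\Rnum{2}c) of \thmref{thm.MUq} via \thmref{thm.higuey} and Lemmas \ref{lem.2a}, \ref{lem.2b}, \ref{lem.2c}, and both obtain strongness as the $i=0$ case of \lemref{lem.2b}. No gaps; the bookkeeping $d-1=n-2$, $d=n-1$ is transported correctly.
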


\begin{proof}
Note that $A_\e$ is AS-Gorenstein of dimension $n-1$.
By \cite[Theorem 1.4]{HU},  $\gldim (\qgr A_\e)<\infty$.
By \lemref{lem.2a}, we see that
$(\cA_\e(-n+3), \dots, \cA_\e, \cX_1, \dots, \cX_\al)$ is a full exceptional sequence in $\Db(\qgr A_\e)$.
Moreover, by \lemref{lem.2b},
$\Ext^q_{\qgr A_\e}(\cU, \cU \otimes_{\cA_\e}(\omega _{\cA_\e}^{-1})^{\otimes i})=0$ for $q \neq 0$ and $i \geq 0$.
In particular, it follows that $(\cA_\e(-n+3), \dots, \cA_\e, \cX_1, \dots, \cX_\al)$ is strong.
By \lemref{lem.2c},
$(\cU, -\otimes_{\cA_\e}\omega _{\cA_\e}^{-1})$ is ample for $\qgr A_\e$.
Therefore, \thmref{thm.MUq} yields the desired conclusions.
\end{proof}

\begin{rem}
If $n=1$, then $A_\e$ is finite-dimensional algebra, so $\qgr A_\e$ is trivial.
If $n=2$, then one can check that $\Db(\qgr A_\e) \cong \Db(\coh \Proj k[x,y]/(x^2+y^2)) \cong \Db(\mod k^2)$,
so $\Db(\qgr A_\e)$ has a full strong exceptional sequence (in this case, $\La_\e \cong k^2$ and 
$\Pi\La_\e \cong \La_\e[X] \cong k[X]^{2}$ where $\deg X=1$).
\end{rem}

\subsection{Quiver Presentations}

We here calculate the quiver presentation of $\La_\e= \End_{\qgr A_{\e}}(\cU)$ in \thmref{thm.main}
where $\cU= A_\e(-n+3) \oplus \cdots \oplus \cA_\e \oplus \cX_1 \oplus \cdots \oplus \cX_\al$.
For simplicity, we write $A:=A_\e$.
We see that $\La_\e$ is isomorphic to
\renewcommand{\arraycolsep}{0.75mm}
{\scriptsize
\begin{align*} 
\begin{pmatrix}
\End(\cA(c)) &0   &\cdots &0 &0 &0&0&\cdots &0&0\\
[\cA(c),\cA(c+1)]&\End(\cA(c+1)) &\cdots &0 &0 &0&0 &\cdots &0&0\\
\vdots&\vdots &\ddots &\vdots &\vdots &\vdots &\vdots &\cdots &\vdots&\vdots\\
[\cA(c),\cA(-1)] &[\cA(c+1),\cA(-1)]&\cdots &\End(\cA(-1)) &0 &0 &0&\cdots &0&0\\
[\cA(c),\cA] &[\cA(c+1),\cA]&\cdots &[\cA(-1),\cA] &\End(\cA) & 0& 0&\cdots &0&0\\
[\cA(c),\cX_1] &[\cA(c+1),\cX_1]&\cdots &[\cA(-1),\cX_1] &[\cA,\cX_1] &\End(\cX_1) &0 &\cdots &0&0\\
[\cA(c),\cX_2] &[\cA(c+1),\cX_2]&\cdots &[\cA(-1),\cX_2] &[\cA,\cX_2] &[\cX_1,\cX_2]&\End(\cX_2)&\cdots &0&0\\
\vdots&\vdots &\vdots &\vdots &\vdots &\vdots &\vdots &\ddots &\vdots &\vdots\\
[\cA(c),\cX_{\al-1}] &[\cA(c+1),\cX_{\al-1}]&\cdots &[\cA(-1),\cX_{\al-1}] &[\cA,\cX_{\al-1}] &[\cX_1,\cX_{\al-1}] &[\cX_2,\cX_{\al-1}] &\cdots
&\End(\cX_{\al-1}) &0\\
[\cA(c),\cX_{\al}] &[\cA(c+1),\cX_{\al}]&\cdots &[\cA(-1),\cX_{\al}] &[\cA,\cX_{\al}] &[\cX_1,\cX_{\al}] &[\cX_2,\cX_{\al}]&\cdots &[\cX_{\al-1},\cX_{\al}] &\End(\cX_{\al})
\end{pmatrix}
\end{align*}
}where we put $c=-n+3$, $[\cM,\cN]= \Hom_{\qgr A}(\cM,\cN)$ and $\End(\cM)= \End_{\qgr A}(\cM)$ for short.
By \lemref{lem.HE} (1), it is isomorphic to
{\small
\begin{align*}
\begin{pmatrix}
A_0 &0   &\cdots &0 &0 &0&0&\cdots &0&0\\
A_1 &A_0  &\cdots &0 &0 &0&0 &\cdots &0&0\\
\vdots&\vdots &\ddots &\vdots &\vdots &\vdots &\vdots &\cdots &\vdots&\vdots\\
A_{n-4} &A_{n-5}&\cdots &A_0  &0 &0 &0&\cdots &0&0\\
A_{n-3} &A_{n-4}&\cdots &A_1 &A_0  & 0& 0&\cdots &0&0\\
(X_1)_{n-3} &(X_1)_{n-4}&\cdots &(X_1)_{1} &(X_1)_{0} &\End(X_1) &0 &\cdots &0&0\\
(X_2)_{n-3} &(X_2)_{n-4}&\cdots &(X_2)_{1} &(X_2)_{0} &0 &\End(X_2)&\cdots &0&0\\
\vdots&\vdots &\vdots &\vdots &\vdots &\vdots &\vdots &\ddots &\vdots &\vdots\\
(X_{\al-1})_{n-3} &(X_{\al-1})_{n-4}&\cdots &(X_{\al-1})_{1} &(X_{\al-1})_{0} &0 &0 &\cdots &\End(X_{\al-1}) &0\\
(X_{\al})_{n-3} &(X_{\al})_{n-4}&\cdots &(X_{\al})_{1} &(X_{\al})_{0} &0 &0&\cdots &0 &\End(X_{\al})
\end{pmatrix}
\end{align*}
}where we put $\End(M) = \End_{\grmod A}(M)$ for short.
Note that $A_0 \cong \End(X_i)\cong k$.

\begin{thm}\label{thm.quiv}
In the situation of \thmref{thm.main}, $\La_\e$ is presented by the quiver
\begin{align*}
\xymatrix @R=2.3pc@C=2.3pc{
1 \ar@<1.2ex>[rrrdd]|(0.2){m_1^{(1)}} \ar@{}[rrrdd]|(0.28){\scriptstyle \cdot} \ar@{}@<0.5ex>[rrrdd]|(0.28){\scriptstyle \cdot} \ar@{}@<-0.5ex>[rrrdd]|(0.28){\scriptstyle \cdot} \ar@<-1.2ex>[rrrdd]|(0.36){m_\be^{(1)}} \\
2 \ar@<1.2ex>[rrrd]|(0.36){m_1^{(2)}} \ar@{}[rrrd]|(0.44){\scriptstyle \cdot} \ar@{}@<0.5ex>[rrrd]|(0.44){\scriptstyle \cdot} \ar@{}@<-0.5ex>[rrrd]|(0.44){\scriptstyle \cdot} \ar@<-1.2ex>[rrrd]|(0.52){m_\be^{(2)}} \\
\vdots
&&&\al+1
\ar@<-1.2ex>[r]_{x_1} \ar@{}[r]|{\scriptstyle \cdot} \ar@{}@<-0.5ex>[r]|{\scriptstyle \cdot} \ar@{}@<0.5ex>[r]|{\scriptstyle \cdot} \ar@<1.2ex>[r]^{x_n}
&\al+2 \ar@<-1.2ex>[r]_{x_1} \ar@{}[r]|{\scriptstyle \cdot} \ar@{}@<-0.5ex>[r]|{\scriptstyle \cdot} \ar@{}@<0.5ex>[r]|{\scriptstyle \cdot} \ar@<1.2ex>[r]^{x_n}
&\;\;\cdots\;\;
\ar@<-1.2ex>[r]_(0.4){x_1} \ar@{}[r]|(0.4){\scriptstyle \cdot} \ar@{}@<-0.5ex>[r]|(0.4){\scriptstyle \cdot} \ar@{}@<0.5ex>[r]|(0.4){\scriptstyle \cdot} \ar@<1.2ex>[r]^(0.4){x_n}
&\al+n-3 \ar@<-1.2ex>[r]_(0.48){x_1} \ar@{}[r]|(0.48){\scriptstyle \cdot} \ar@{}@<-0.5ex>[r]|(0.48){\scriptstyle \cdot} \ar@{}@<0.5ex>[r]|(0.48){\scriptstyle \cdot} \ar@<1.2ex>[r]^(0.48){x_n}
&\al+n-2 \\
\al-1\ar@<1.2ex>[rrru]|(0.58){m_1^{(\al-1)}} \ar@{}[rrru]|(0.44){\scriptstyle \cdot} \ar@{}@<0.5ex>[rrru]|(0.44){\scriptstyle \cdot} \ar@{}@<-0.5ex>[rrru]|(0.44){\scriptstyle \cdot} \ar@<-1.2ex>[rrru]|(0.3){m_\be^{(\al-1)}}\\
\al \ar@<1.2ex>[rrruu]|(0.36){m_1^{(\al)}} \ar@{}[rrruu]|(0.28){\scriptstyle \cdot} \ar@{}@<0.5ex>[rrruu]|(0.28){\scriptstyle \cdot} \ar@{}@<-0.5ex>[rrruu]|(0.28){\scriptstyle \cdot} \ar@<-1.2ex>[rrruu]|(0.2){m_\be^{(\al)}}
}
\end{align*}
with relations
\begin{align*}
&x_ix_j - \ep_{ij}x_jx_i=0,\\
&x_1^2+\cdots+x_n^2=0, \\
&g^{(s,t)}:= \sum_{\substack{1 \leq i \leq \be \\ 1\leq j\leq n }} \la_{i,j}^{(s,t)} m_{i}^{(s)}x_{j}=0 \quad (s=1,2,\dots,\al,\; t=1,2,\dots,\be)
\end{align*}
for some constants $\la_{ij}^{(s,t)} \in k$.
\end{thm}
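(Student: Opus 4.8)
The plan is to compute $\La_\e=\End_{\qgr A_\e}(\cU)$ directly from the matrix description displayed just above the statement, reading off the vertices and arrows from its radical filtration and then pinning down the relations. First I would record, using the full exceptional sequence of \lemref{lem.2a}, that the summands $\cA_\e(-n+3),\dots,\cA_\e,\cX_1,\dots,\cX_\al$ of $\cU$ are exceptional (so each has endomorphism ring $k$) and pairwise non-isomorphic: if two of them, say $\cE_i\cong\cE_j$ with $i<j$, were isomorphic, then $\Hom_{\qgr A_\e}(\cE_j,\cE_i)\cong k\neq0$ would contradict the semi-orthogonality. Hence $\La_\e$ is basic, and its vertices are exactly these $n-2+\al$ objects, labelled as in the statement.

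Next I would determine $\mathrm{rad}\,\La_\e/\mathrm{rad}^2\,\La_\e$, i.e.\ the arrows. Along the $\cA_\e$-part, $\Hom_{\qgr A_\e}(\cA_\e(-i),\cA_\e(-i+1))\cong A_1$ is $n$-dimensional with basis $x_1,\dots,x_n$ by \lemref{lem.HE}~(1), while every morphism $\cA_\e(-i)\to\cA_\e(-j)$ with $i-j\geq2$ lies in $\mathrm{rad}^2$ because $A_\e$ is generated in degree $1$; this yields the linear chain carrying the arrows $x_1,\dots,x_n$ between consecutive vertices. For the $\cX$-part, $\Hom_{\qgr A_\e}(\cA_\e,\cX_s)\cong(X_s)_0$ is $\be$-dimensional by \lemref{lem.hil}~(2), giving the $\be$ arrows $m_1^{(s)},\dots,m_\be^{(s)}$; these are irreducible since $(X_s)_{<0}=0$ forces any factorization to pass through $\cA_\e$, and $\Hom_{\qgr A_\e}(\cX_l,\cX_s)\cong k\,\delta_{ls}$ by \lemref{lem.homx} excludes $\cX$-to-$\cX$ arrows. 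Finally $\Hom_{\qgr A_\e}(\cX_s,\cA_\e(-j))=0$ by the semi-orthogonal ordering of \lemref{lem.2a}, so no arrows leave the $\cX$-vertices into the chain. This reproduces the drawn quiver, with arrows identified with irreducible morphisms in the opposite direction, as is forced by the lower-triangular shape of the matrix.

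It then remains to identify the relations, i.e.\ the kernel of the natural surjection $kQ\twoheadrightarrow\La_\e$. Two families are immediate from the defining relations of $A_\e$: composing $x$-morphisms $\cA_\e(-2)\to\cA_\e$ shows that $x_ix_j-\ep_{ij}x_jx_i$ and $x_1^2+\cdots+x_n^2=f_\e$ act as zero, giving the skew-commutativity and quadric relations. The mixed relations $g^{(s,t)}$ come from the minimal linear presentation of $X_s$: by \lemref{lem.x}~(3) there is an exact sequence $A_\e(-1)^\be\xrightarrow{\phi_s}A_\e^\be\to X_s\to0$ with $\phi_s$ a $\be\times\be$ matrix of linear forms, and reading the $t$-th column of $\phi_s$ as a relation among the generators $m_i^{(s)}$ (a basis of $(X_s)_0$) and the $x_j$ produces $g^{(s,t)}=\sum_{i,j}\la_{ij}^{(s,t)}m_i^{(s)}x_j=0$ for suitable scalars, one for each $t=1,\dots,\be$.

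The main obstacle is completeness: proving that these three families generate the whole relation ideal, so that no further relations hide in longer paths. I would settle this by a graded dimension count, comparing $kQ/I$ with $\La_\e$ block by block. The $\cA_\e$-block of $kQ/I$ is, by construction, the truncation of $k\langle x_1,\dots,x_n\rangle/(x_ix_j-\ep_{ij}x_jx_i,\;f_\e)=A_\e$ to degrees $0,\dots,n-3$, so it matches the corresponding matrix entries $A_m$ automatically, since these relations already present $A_\e$ in every degree. For the mixed block one checks that the paths $m_i^{(s)}x_{j_1}\cdots x_{j_p}$, taken modulo the $x$-relations and the $g^{(s,t)}$, reproduce $(X_s)_p$ for each $0\leq p\leq n-3$: after imposing the $x$-relations the chain-paths of length $p$ form $A_{e,p}$, so the $\be$ generators $m_i^{(s)}$ span the free module $(A_\e^\be)_p$, and the $g^{(s,t)}$ are precisely the degree-$1$ syzygies whose $A_\e$-multiples exhaust $(\mathrm{im}\,\phi_s)_p$ because the resolution in \lemref{lem.x}~(3) is linear. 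The quotient therefore has dimension $\dim_k(X_s)_p=\be\binom{n-2+p}{n-2}$, matching \lemref{lem.hil}~(2) and the matrix entries. Equality of all dimensions upgrades the surjection $kQ/I\to\La_\e$ to an isomorphism, which completes the proof.
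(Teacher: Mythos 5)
Your proposal is correct and follows essentially the same route as the paper: the quiver shape is read off the matrix description of $\La_\e$, the mixed relations $g^{(s,t)}$ are produced from the presentation $0 \to \Ker\vph_s \to A_\e^{\be} \xrightarrow{\vph_s} X_s \to 0$ of Lemma~\ref{lem.x}(3), and completeness rests on the fact that $\Ker\vph_s \cong X_{s'}(-1)$ is generated in degree $1$ by the linear syzygies $\boldsymbol{a}^{(s,t)}$. The only cosmetic difference is the final step: the paper argues directly that any relation $\sum_i m_i^{(s)}a_i=0$ has coefficient vector in $\Ker\vph_s$ and hence is an $A_\e$-combination of the $g^{(s,t)}$, whereas you repackage the same fact as a block-by-block graded dimension count upgrading the surjection $kQ/I \to \La_\e$ to an isomorphism.
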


\begin{proof}
It follows from the matrix description of $\La_\e$ above that the shape of the quiver is as claimed
(note that the idempotent $e_{ii}$ of the matrix description corresponds to the vertex $\al +n-1-i$ of the quiver, where $e_{ii}$ is the matrix such that the $(i, i)$-entry is $1$ and the other entries are all $0$).

Now let us prove the relations.  Clearly, $x_ix_j - \ep_{ij}x_jx_i=0$ and 
$x_1^2+\cdots+x_n^2=0$ are satisfied in $\La_\e$.
By \lemref{lem.x} (3), there exists an exact sequence
\[\xymatrix @R=1.5pc@C=1.5pc{
0 \ar[r] & \Ker\vph_s \ar[r] &A^{\be} \ar[r]^{\vph_s} & X_s \ar[r] &0
}\] 
in $\grmod A$ for any $1\leq s \leq \al$,
so we can take $m_{1}^{(s)},\dots, m_{\be}^{(s)} \in (X_s)_0$ such that they form a minimal set of generators of $X_s$,
and $\vph_s$ is the left multiplication of $\begin{pmatrix} m_{1}^{(s)} &\cdots &m_{\be}^{(s)} \end{pmatrix}$. 
By \lemref{lem.x} (3), 
it follows that $\Ker\vph_s$ is isomorphic to $X_{s'}(-1)$ for some $s'$.
Thus we can take $\la_{i,j}^{(s,t)} \in k$ such that
\[ \boldsymbol{a}^{(s,1)} =\begin{pmatrix} \sum_{1\leq j\leq n}\la_{1,j}^{(s,1)}x_j \\ \vdots \\ \sum_{1\leq j\leq n}\la_{\be,j}^{(s,1)}x_j \end{pmatrix},
\dots,
\boldsymbol{a}^{(s,\be)} =\begin{pmatrix} \sum_{1\leq j\leq n}\la_{1,j}^{(s,\be)}x_j \\ \vdots \\ \sum_{1\leq j\leq n}\la_{\be,j}^{(s,\be)}x_j \end{pmatrix}
\; \in (\Ker\vph_s)_0 \; \subset (A^\be)_1
\]
form a minimal set of generators of $\Ker\vph_s$.
For any $1 \leq t \leq \be$, we have
\begin{align*}
g^{(s,t)}&=\sum_{\substack{1 \leq i \leq \be \\ 1\leq j\leq n }} \la_{i,j}^{(s,t)}m_{i}^{(s)}x_j
= m_{1}^{(s)}(\sum_{1\leq j\leq n}\la_{1,j}^{(s,t)}x_j)+\cdots +m_{\be}^{(s)}(\sum_{1\leq j\leq n}\la_{\be,j}^{(s,t)}x_j)\\
&=\begin{pmatrix} m_{1}^{(s)} &\cdots &m_{\be}^{(s)} \end{pmatrix}\begin{pmatrix} \sum_{1\leq j\leq n}\la_{1,j}^{(s,t)}x_j \\ \vdots \\ \sum_{1\leq j\leq n}\la_{\be,j}^{(s,t)}x_j \end{pmatrix}
=\vph_s(\boldsymbol{a}^{(s,t)})=0
\end{align*}
in $(X_s)_1$. Thus $g^{(s,t)}=0$ is satisfied in $\La_\e$.

To show no more relation is needed, suppose that $m_{1}^{(s)}a_1+\cdots+m_{\be}^{(s)}a_\be =0$ in $(X_s)_i$ for some $a_1,\dots, a_\be \in A_i$.
Then ${\scriptstyle \begin{pmatrix} a_{1} \\ \vdots \\ a_{\be} \end{pmatrix}}$ is in $\Ker \vph_s$, so it is generated by
$\boldsymbol{a}^{(s,1)}, \dots , \boldsymbol{a}^{(s,\be)}$, i.e.
\[
\begin{pmatrix} a_{1} \\ \vdots \\ a_{\be} \end{pmatrix} = \sum_{t=1}^{\be}\boldsymbol{a}^{(s,t)}c_t.
\]
Hence we obtain
\[
m_{1}^{(s)}a_1+\cdots+m_{\be}^{(s)}a_\be = \sum_{t=1}^{\be}g^{(s,t)}c_t,
\]
and this finishes the proof.
\end{proof}

\begin{rem}
If we have minimal free resolutions of $X_1, \dots, X_\al \in \CM(A_\e)$ explicitly, in other words, if we have indecomposable reduced matrix factorizations of $f_\e$ explicitly, then we can give coefficients $\la_{ij}^{(s,t)}$ explicitly (see \exref{ex.-1}, Section 4).
Note that $\la_{ij}^{(s,t)}$ depends on the choice of generators of $X_i$.
The author does not know whether the corresponding algebras are isomorphic, when we choose different generators and give different coefficients.
\end{rem}

Typical examples are the following.

\begin{ex}\label{ex.n3}
Assume that $n=3$.
Then one of the following two cases occurs.

(1) If $\e_{12}\e_{13}\ep_{23}=1$, then one can verify $r=\mnull_{\F_2} \De_{\e}=0$. In this case, $\al=2^0=1 ,\be =2^1=2$, so
$\La_\e$ is isomorphic to the path algebra $kQ$ of the quiver  
\[ Q = \xymatrix@R=1pc@C=2.5pc{
1 \ar@<0.5ex>[r]^(0.5){m_1^{(1)}} \ar@<-0.5ex>[r]_(0.5){m_2^{(1)}} &2
}\quad (\text{$\widetilde{A_1}$ type}).\] 

(2) If $\e_{12}\e_{13}\ep_{23}=-1$, then one can verify $r=\mnull_{\F_2} \De_{\e}=2$. In this case, $\al=2^2=4 ,\be =2^0=1$, so
$\La_\e$ is isomorphic to the path algebra $kQ$ of the quiver
\[Q =\vcenter{
\xymatrix@R=0.25pc@C=2.5pc{
1 \ar[rd]^(0.4){m_1^{(1)}} &&4 \ar[ld]_(0.4){m_1^{(4)}}\\
 &5\\
2 \ar[ru]_(0.4){m_1^{(2)}} &&3 \ar[lu]^(0.4){m_1^{(3)}}
}}\quad (\text{$\widetilde{D_4}$ type}).\] 
\end{ex}

\begin{ex}\label{ex.-1}
Assume that $\e_{ij}=-1$ for every $1 \leq i<j \leq n$. Then it is easy to see $r=\mnull_{\F_2} \De_{\e}=n-1$.
Thus $\al=2^{n-1} ,\be =2^0=1$.

For $1 \leq s \leq 2^{n-1}=\al$, we represent
\[ s-1 = c^{(s)}_1c^{(s)}_2\cdots c^{(s)}_{n-1}\]
as a binary number where $c^{(s)}_1, \dots, c^{(s)}_{n-1} \in \{0,1\}$. We define
\begin{align*}
&a^{(s)} := x_1 + (-1)^{c^{(s)}_1}x_2 + (-1)^{c^{(s)}_2}x_3 + \cdots + (-1)^{c^{(s)}_{n-1}}x_n \; \in A_\e,\\
&X_s := {A_\e}/a^{(s)}{A_\e}.
\end{align*}
Then $X_1, \dots, X_\al \in \M_\e$. (Note that $X_s$ has a minimal free resolution
\[\xymatrix @R=1pc@C=2pc{
\cdots \ar[r]^(0.4){a^{(s)}\cdot} & A_\e(-2) \ar[r]^{a^{(s)}\cdot} & A_\e(-1) \ar[r]^(0.6){a^{(s)}\cdot} & A_\e \ar[r] & X_s \ar[r] & 0
}\]
in $\grmod A_\e$.) Hence $\La_\e$ is given by the quiver
\begin{align*}
\xymatrix @R=1pc@C=2pc{
1 \ar[rrdd]|(0.4){m_1^{(1)}} \\
2 \ar[rrd]|(0.5){m_1^{(2)}} \\
\vdots
&&\al+1
\ar@<-1.2ex>[r]_{x_1} \ar@{}[r]|{\scriptstyle \cdot} \ar@{}@<-0.5ex>[r]|{\scriptstyle \cdot} \ar@{}@<0.5ex>[r]|{\scriptstyle \cdot} \ar@<1.2ex>[r]^{x_n}
&\al+2 \ar@<-1.2ex>[r]_{x_1} \ar@{}[r]|{\scriptstyle \cdot} \ar@{}@<-0.5ex>[r]|{\scriptstyle \cdot} \ar@{}@<0.5ex>[r]|{\scriptstyle \cdot} \ar@<1.2ex>[r]^{x_n}
&\;\;\cdots\;\;
\ar@<-1.2ex>[r]_(0.4){x_1} \ar@{}[r]|(0.4){\scriptstyle \cdot} \ar@{}@<-0.5ex>[r]|(0.4){\scriptstyle \cdot} \ar@{}@<0.5ex>[r]|(0.4){\scriptstyle \cdot} \ar@<1.2ex>[r]^(0.4){x_n}
&\al+n-3 \ar@<-1.2ex>[r]_(0.48){x_1} \ar@{}[r]|(0.48){\scriptstyle \cdot} \ar@{}@<-0.5ex>[r]|(0.48){\scriptstyle \cdot} \ar@{}@<0.5ex>[r]|(0.48){\scriptstyle \cdot} \ar@<1.2ex>[r]^(0.48){x_n}
&\al+n-2 \\
\al-1\ar[rru]|(0.5){m_1^{(\al-1)}}\\
\al \ar[rruu]|(0.4){m_1^{(\al)}}
}
\end{align*}
with relations
\begin{align*}
&x_ix_j + x_jx_i=0 \quad (i\neq j),\\
&x_1^2+\cdots+x_n^2=0, \\
&g^{(s)}:= \sum_{1\leq j \leq n} (-1)^{c^{(s)}_{j-1}}m_1^{(s)}x_j =0 \quad (s=1,2,\dots,\al)
\end{align*}
where we consider $c^{(s)}_0 = 0$.
\end{ex}

Incidentally, combining our results and a version of the BGG correspondence \cite[Theorem 5.3 and Lemma 5.1]{Mrr},
we have the following equivalences of categories.

\begin{cor} \label{cor.dim1} 
Let $B_\e= k\ang{x_1, \dots, x_n}/(\e_{ij}x_ix_j+x_jx_i, x_i^2-x_n^2)_{1\leq i,j\leq n,i\neq j}$ with $\deg x_i=1$.
Then $B_\e$ is a noetherian AS-Gorenstein Koszul algebra of dimension $1$
such that $B_\e^! \cong A_\e$.
If $n  \geq 3$ and $\La_\e$ is as in \thmref{thm.main} (or \thmref{thm.quiv}), then we have an equivalence of triangulated categories
\[ \uCM(B_\e) \cong \Db(\qgr A_\e) \cong \Db(\mod \La_\e). \]
\end{cor}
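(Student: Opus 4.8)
The plan is to recognize $B_\e$ as the Koszul dual of $A_\e$, to check that it is noetherian AS-Gorenstein of dimension $1$, and then to obtain the first equivalence from a noncommutative BGG correspondence, feeding in the smoothness of $\qgr A_\e$ already recorded in \thmref{thm.higuey}; the second equivalence is then nothing but \thmref{thm.main}(2). Thus almost all the work is assembling inputs that are already available, and the genuinely new verification is the quadratic-dual computation together with the matching of hypotheses.

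First I would verify the self-contained claim $B_\e^!\cong A_\e$ by computing the quadratic dual directly. Write $A_\e=T(V)/(R)$ with $V=\bigoplus_i kx_i$ and $R\subseteq V\otimes V$ spanned by the commutation relations $x_i\otimes x_j-\e_{ij}x_j\otimes x_i$ for $i<j$ together with $f_\e=\sum_k x_k\otimes x_k$, and compute the orthogonal complement $R^\perp\subseteq V^*\otimes V^*$ for the standard pairing. Pairing to zero against the commutation relations forces the coefficients to satisfy $c_{ij}=\e_{ij}c_{ji}$, which yields the dual relations $\e_{ij}y_iy_j+y_jy_i$; pairing to zero against $f_\e$ forces $\sum_k c_{kk}=0$ on the diagonal, and the basis $e_{kk}-e_{nn}$ of this hyperplane yields the dual relations $y_i^2-y_n^2$. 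Hence $A_\e^!\cong B_\e$. Since $A_\e$ is Koszul, $B_\e^!=(A_\e^!)^!\cong A_\e$ and $B_\e$ is itself Koszul.

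Next I would show $B_\e$ is noetherian AS-Gorenstein of dimension $1$. The cleanest route uses the central extension structure recalled in Section 2.4: there is a central regular element $w\in A_\e^!=B_\e$ of degree $2$ with $B_\e/(w)\cong S_\e^!$, where $S_\e^!\cong k\langle y_i\rangle/(\e_{ij}y_iy_j+y_jy_i,\,y_k^2)$ is the finite-dimensional skew exterior algebra, which is Frobenius, i.e. AS-Gorenstein of dimension $0$. Since $H_{S_\e^!}(t)=(1+t)^n$, we get $H_{B_\e}(t)=(1+t)^n/(1-t^2)=(1+t)^{n-1}(1-t)^{-1}$, so $B_\e$ has a simple pole at $t=1$ and dimension $1$; quotienting out a central regular element of degree $2$ raises the Gorenstein dimension by one, so $B_\e$ is AS-Gorenstein of dimension $1$. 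This is precisely the content of \cite[Lemma 5.1]{Mrr} applied to the Koszul AS-Gorenstein algebra $A_\e$. Noetherianity is immediate because $B_\e$ is module-finite over the central polynomial subalgebra $k[w]$.

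Finally I would apply the BGG correspondence. By \thmref{thm.higuey} we have $\gldim(\qgr A_\e)<\infty$, so $A_\e$ meets the hypotheses of \cite[Theorem 5.3]{Mrr}, which provides the equivalence of triangulated categories $\uCM(B_\e)\cong\Db(\qgr A_\e)$ (the Koszul-dual form of the classical equivalence $\uCM(S^!)\cong\Db(\qgr S)$). Composing with $\Db(\qgr A_\e)\cong\Db(\mod\La_\e)$ from \thmref{thm.main}(2) produces the asserted chain of equivalences. The only delicate point, and the place I expect to spend the most care, is the hypothesis matching: one must confirm that the smoothness condition $\gldim(\qgr A_\e)<\infty$ is exactly what \cite[Theorem 5.3]{Mrr} demands, and that the Koszul-dual bookkeeping of \cite{Mrr} (the degree-$2$ central element $w$, the identification $A_\e^!\cong B_\e$, and which algebra plays the role of the ``exterior'' side) is consistent with the conventions above. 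Everything else is formal, which is why the corollary is genuinely an immediate application of the machinery already in place.
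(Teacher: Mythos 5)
Your proposal is correct and follows essentially the same route as the paper, which states the corollary as an immediate consequence of combining $B_\e \cong A_\e^!$, Mori's BGG correspondence \cite[Theorem 5.3 and Lemma 5.1]{Mrr} (fed with the smoothness $\gldim(\qgr A_\e)<\infty$ from \thmref{thm.higuey}), and \thmref{thm.main}(2). Your explicit verifications --- the quadratic-dual computation $R^\perp$, noetherianity via module-finiteness over $k[w]$, and the AS-Gorenstein property of dimension $1$ via the central regular element $w$ with $B_\e/(w)\cong S_\e^!$ Frobenius --- are exactly the details the paper leaves implicit in its citation of \cite{Mrr} and Section 2.4.
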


\begin{rem}
This result can be seen as an analogue of Buchweitz-Iyama-Yamaura's theorem \cite[Theorem 1.4]{BIY} on 
graded commutative Gorenstein rings of dimension $1$.
\end{rem}

\section{Classification for $n=4$}
The classification of $\Db(\qgr A_\e)$ for $n=3$ is obtained in \exref{ex.n3}.
In this section, we give the classification for $n=4$.
We continue to use Notation \ref{nota}. 

First, using the classification of the graphs $G_\e$ up to mutation and isomorphism,
we explain that if $n=4$, then $\qgr A_\e$ can be divided into three cases.

\begin{lem}\label{lem.Zh}
Let $G_{\e}, G_{\e'}$ be the graphs associated with $\e, \e'$. 
If $G_{\e'}$ is obtained from $G_{\e}$ by a mutation, then $\GrMod A_\e \cong \GrMod A_{\e'}$ and $\qgr A_{\e} \cong \qgr A_{\e'}$.
\end{lem}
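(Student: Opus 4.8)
The plan is to realize the passage from $A_\e$ to $A_{\e'}$ as a \emph{Zhang twist} followed by the rescaling of a single variable. First I would record what a mutation does at the level of the defining data: since $ij\in E(G_\e)$ exactly when $\e_{ij}=1$, the mutation of $G_\e$ at a vertex $v$ toggles every edge incident to $v$ and fixes all other edges, so $\e'$ is obtained from $\e$ by $\e'_{vj}=-\e_{vj}$ for $j\neq v$ and $\e'_{ij}=\e_{ij}$ whenever $v\notin\{i,j\}$.

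Next I would introduce the graded algebra automorphism $\sigma$ of $S_\e$ determined by $\sigma(x_v)=-x_v$ and $\sigma(x_j)=x_j$ for $j\neq v$; any diagonal rescaling is an automorphism because each defining relation $x_ix_j-\e_{ij}x_jx_i$ is an eigenvector of $\sigma$. Computing the twisted multiplication $a\ast b=a\,\sigma^{\deg a}(b)$ on degree-one elements shows that in $S_\e^{\sigma}$ the generators satisfy exactly the skew relations governed by $\e'$; comparing Hilbert series (both equal $(1-t)^{-n}$) then upgrades this to a graded algebra isomorphism $S_\e^{\sigma}\cong S_{\e'}$ fixing the generators. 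Since $\sigma(f_\e)=f_\e$, the automorphism descends to $\bar\sigma\in\Aut A_\e$ and the ideal $(f_\e)$ is $\sigma$-stable, so twisting commutes with the quotient and $A_\e^{\bar\sigma}\cong S_\e^{\sigma}/(f_\e)$. By Zhang's theorem on twisted graded algebras the twist functor is an equivalence $\GrMod A_\e\cong\GrMod A_\e^{\bar\sigma}$ that preserves the underlying graded vector spaces, hence preserves Hilbert series; in particular it restricts to an equivalence $\fdim A_\e\cong\fdim A_\e^{\bar\sigma}$.

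The one genuinely delicate point is that the twist does not send $f_\e$ to $f_{\e'}$ on the nose. Expressing the original square through the twisted product via $x_i^2=x_i\ast\sigma^{-1}(x_i)$ gives, inside $S_\e^{\sigma}$, the identity $f_\e=\sum_{j\neq v}x_j\ast x_j-x_v\ast x_v$, so under $S_\e^{\sigma}\cong S_{\e'}$ the class of $f_\e$ becomes $f'=\sum_{j\neq v}x_j^2-x_v^2=f_{\e'}-2x_v^2$. I would repair this with the graded automorphism $\rho$ of $S_{\e'}$ scaling $x_v$ by a square root of $-1$ (which exists since $k$ is algebraically closed of characteristic $\neq 2$) and fixing the remaining generators: then $\rho(f')=\sum_{j\neq v}x_j^2-(\sqrt{-1})^2x_v^2=f_{\e'}$, so $\rho$ induces $S_{\e'}/(f')\cong S_{\e'}/(f_{\e'})=A_{\e'}$. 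Chaining the isomorphisms yields $A_\e^{\bar\sigma}\cong A_{\e'}$, whence $\GrMod A_\e\cong\GrMod A_{\e'}$.

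Finally I would descend to the quotient categories: each equivalence above (the Zhang twist and the two algebra isomorphisms) preserves finite-dimensional modules, so it restricts to an equivalence $\fdim A_\e\cong\fdim A_{\e'}$ and therefore induces an equivalence of Serre quotients $\qgr A_\e\cong\qgr A_{\e'}$. The main obstacle is exactly the bookkeeping around $f$: one must track how the central element transforms under the twisted product and recognize that only a $\sqrt{-1}$-rescaling, rather than a further $\pm1$-twist, restores the standard quadric $f_{\e'}$, which is precisely where the hypotheses on $k$ are used.
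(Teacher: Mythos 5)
Your proof is correct and takes essentially the same approach as the paper: both realize the mutation as a Zhang twist by the sign-flip automorphism at the mutated vertex, identify the twisted algebra with $S_{\e'}/(\sum_{j\neq v}x_j^2 - x_v^2)$, repair the quadric via the $\sqrt{-1}$-rescaling of $x_v$, and then apply Zhang's theorem together with descent to $\qgr$. The only cosmetic difference is that you twist $S_\e$ and pass to the quotient by the $\sigma$-stable ideal $(f_\e)$, whereas the paper twists $A_\e$ directly.
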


\begin{proof}
By reordering the vertices, without loss of generality, we may assume that $G_{\e'}$ is obtained from $G_{\e}$ by mutation at $n$.
Let $\theta$ be the graded algebra automorphism of $A_\e$ defined by $\theta(x_i)=x_i$ for $i \neq n$ and $\theta(x_n)=-x_n$. Then the twisted algebra of $A_\e$ by the twisting system $\{\theta^i \}_{i \in \Z}$ in the sense of Zhang \cite{Zh}  is isomorphic to
$S_{\e'}/(x_1^2+\cdots+x_{n-1}^2-x_n^2)$.
Moreover, the algebra homomorphism 
$\psi: S_{\e'}/(x_1^2+\cdots+x_{n-1}^2-x_n^2) \to  A_{\e'}$
defined by 
$\psi(x_i)= x_i$ for $i \neq n$ and $\psi(x_n)=\sqrt{-1}x_n$ is an isomorphism.
Thus one gets 
\[\GrMod A_\e \cong \GrMod  S_{\e'}/(x_1^2+\cdots+x_{n-1}^2-x_n^2) \cong \GrMod  A_{\e'} \]
by \cite[Theorem 1.1]{Zh}. This equivalence induces  $\qgr A_{\e} \cong \qgr A_{\e'}$
(see \cite[Section 4.1]{Ro} for example).
\end{proof}

From now on, we focus on the case $n=4$.  It is easy to see that every $G_{\e}$ becomes one of the following graphs by iterated mutations up to isomorphism:
\[(1)\; \xy /r2pc/: 
{\xypolygon4{~={90}~*{\xypolynode}~>{}}},
"1";"2"**@{-},
"1";"3"**@{-},
"1";"4"**@{-},
"2";"3"**@{-},
"2";"4"**@{-},
"3";"4"**@{-},
\endxy;
\qquad\quad 
(2) \; \xy /r2pc/: 
{\xypolygon4{~={90}~*{\xypolynode}~>{}}},
"1";"4"**@{-},
\endxy;
\qquad\quad 
(3) \; \xy /r2pc/: 
{\xypolygon4{~={90}~*{\xypolynode}~>{}}},
\endxy.
\]
Therefore, by \lemref{lem.Zh}, we have the following.

\begin{prop} \label{prop.GM}
If $n=4$, then any $\qgr A_{\e'}$ (resp. $\GrMod A_{\e'}$) is equivalent to
$\qgr A_\e$ (resp. $\GrMod A_{\e}$) where $A_\e$ is one of the following algebras:
\begin{enumerate}
\item[(QS1)] $A_\e = k[x, y, z, w]/(x^2+y^2+z^2+w^2)$;
\item[(QS2)] $A_\e =  k\<x, y, z, w\>/(
xy+yx,\ xz+zx,\ xw-wx,\ zy+yz,\ wy+yw,\ zw+wz,\ x^2+y^2+z^2+w^2)$;
\item[(QS3)] $A_\e =  k\<x, y, z, w\>/(
xy+yx,\ xz+zx,\ xw+wx,\ zy+yz,\ wy+yw,\ zw+wz,\ x^2+y^2+z^2+w^2)$.
\end{enumerate}
\end{prop}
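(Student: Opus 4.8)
The plan is to reduce the statement to the combinatorial classification of the graphs $G_\e$ already recorded just above, and then to transport the resulting equivalences through \lemref{lem.Zh}. First I would make precise the assertion that for $n=4$ every $G_\e$ is carried, by iterated mutation together with a relabelling of vertices, to one of the three displayed graphs: the complete graph $K_4$, a single edge on four vertices, and the edgeless graph. The clean way to see this is to encode $G_\e$ by its edge-indicator vector $(e_{12},\dots,e_{34})\in\F_2^{6}$ and to observe that a single mutation at a vertex $v$ complements exactly the edges incident to $v$, i.e.\ it adds the indicator of the \emph{star} at $v$. The subgroup of $\F_2^6$ generated by the four star vectors is the cut space $W$ of $K_4$, which has dimension $|V|-1=3$, so the mutation class of $G_\e$ is precisely its coset in $\F_2^6/W$.

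That coset is detected by pairing with the orthogonal complement $W^{\perp}$, the cycle space of $K_4$, which is spanned by the four triangles; concretely the invariant is the tuple of parities $t_{ijk}=e_{ij}+e_{ik}+e_{jk}$. Since each edge of $K_4$ lies in exactly two triangles, these four parities sum to $0$, leaving the expected three independent values. Identifying each triangle with the vertex it omits, the action of $S_4$ on these parities is the standard action on the four vertices, so the mutation invariant is a function $\{1,2,3,4\}\to\F_2$ of even support, and the three $S_4$-orbits are exactly those of support $0$, $2$, and $4$. One then checks that these orbits are represented respectively by the edgeless graph (all $t_{ijk}=0$), a single edge, say on vertices $1$ and $4$ (parities $(0,1,1,0)$, support $2$), and $K_4$ (all $t_{ijk}=1$). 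This is the rigorous form of the ``it is easy to see'' passage in the text.

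Next I would invoke \lemref{lem.Zh}: a single mutation carrying $G_\e$ to $G_{\e'}$ yields equivalences $\GrMod A_\e\cong\GrMod A_{\e'}$ and $\qgr A_\e\cong\qgr A_{\e'}$. Applying this along a finite sequence of mutations, and noting that reordering the vertices of $G_\e$ is merely a relabelling of the variables of $A_\e$ and hence an algebra isomorphism (a fortiori an equivalence of graded module categories), gives the same two equivalences for any two graphs in a common mutation-and-isomorphism class. Thus $\GrMod A_{\e'}$ and $\qgr A_{\e'}$ depend, up to equivalence, only on the orbit of $G_{\e'}$. Finally I would read off the representative algebra from each canonical graph using that an edge $ij$ encodes $\e_{ij}=1$ (commuting) and a non-edge encodes $\e_{ij}=-1$ (anticommuting): $K_4$ gives all $\e_{ij}=1$, hence the commutative ring (QS1); the single edge between the variables $x$ and $w$ gives $\e_{14}=1$ and all other off-diagonal entries $-1$, hence (QS2); and the edgeless graph gives all $\e_{ij}=-1$, hence (QS3).

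The main obstacle is the first step, the orbit count itself; everything afterward is a formal application of \lemref{lem.Zh} and a transcription of edges into signs. The delicate point is to isolate the correct complete mutation invariant, and the cut-space/cycle-space description above is what makes it transparent that there are exactly eight mutation classes collapsing to three classes under $S_4$. The only care needed is to confirm that the $S_4$-action on the triangle parities coincides with the standard action on vertices via the vertex-omission bijection, which is precisely what forces the three orbits to be the displayed graphs rather than more.
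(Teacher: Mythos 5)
Your proof is correct, and its skeleton is the same as the paper's: classify the graphs $G_\e$ on four vertices up to iterated mutation and isomorphism into the three displayed representatives, then transport the equivalences through \lemref{lem.Zh}, together with the observation that relabelling vertices is merely an isomorphism of the algebras $A_\e$. The difference lies entirely in the combinatorial step, which the paper dismisses with ``it is easy to see'' (implicitly a finite check over the isomorphism classes of graphs on four vertices), whereas you prove it structurally: mutation at $v$ flips exactly the edges at $v$, so the mutation classes are the cosets of the cut space $W$ of $K_4$ in $\F_2^6$, detected by pairing with the cycle space (the triangle parities), and the $S_4$-orbits of even-support parity functions on the vertices have sizes $1$, $6$, $1$, represented by the edgeless graph, a single edge, and $K_4$ respectively. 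This argument is sound, and it buys something the paper's assertion does not: a proof that the three representatives lie in pairwise \emph{distinct} mutation-and-isomorphism classes, so the eight mutation classes collapse to exactly three. Note, however, that this distinctness is not needed for the proposition itself, and it does not by itself yield the ``exactly one'' clause of the theorem at the end of Section 4: inequivalent graph classes could a priori still produce equivalent categories $\qgr A_\e$, which is why the paper's Hochschild cohomology computation for $\La_\e$ remains necessary there.
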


We next show that if $i \neq j$ then (QS$i$) and (QS$j$) are not derived equivalent.
To do this, we compute the Hochschild cohomology $\HH^i(\La_\e)$ of $\La_\e$.

\begin{lem} \label{lem.HH}
If $\dim_k \HH^i(\La_\e) \neq \dim_k \HH^i(\La_{\e'})$ for some $i$, then 
$\Db(\qgr A_{\e}) \not\cong \Db(\qgr A_{\e'})$. 
\end{lem}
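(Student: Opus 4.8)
The plan is to prove the contrapositive: I assume that $\Db(\qgr A_\e) \cong \Db(\qgr A_{\e'})$ as triangulated categories and deduce that $\dim_k \HH^i(\La_\e) = \dim_k \HH^i(\La_{\e'})$ for every $i$, which contradicts the hypothesis. The entire argument rests on transporting the given equivalence of noncommutative schemes through tilting theory into an equivalence of module categories, and then invoking the derived invariance of Hochschild cohomology.

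First I would apply \thmref{thm.main} (2), which provides equivalences of triangulated categories $\Db(\qgr A_\e) \cong \Db(\mod \La_\e)$ and $\Db(\qgr A_{\e'}) \cong \Db(\mod \La_{\e'})$. Composing these with the assumed equivalence $\Db(\qgr A_\e) \cong \Db(\qgr A_{\e'})$ yields a triangulated equivalence $\Db(\mod \La_\e) \cong \Db(\mod \La_{\e'})$. Since $\La_\e$ and $\La_{\e'}$ are finite-dimensional $k$-algebras (indeed, of finite global dimension $n-2$ by \thmref{thm.main} (1)), this exhibits $\La_\e$ and $\La_{\e'}$ as \emph{derived equivalent} finite-dimensional algebras.

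Next I would invoke the fundamental fact that Hochschild cohomology is an invariant of the derived category of a finite-dimensional algebra: if two such algebras are derived equivalent, then their Hochschild cohomology groups are isomorphic (as graded $k$-vector spaces, in fact as graded algebras) in every degree. This is a standard consequence of the theory of derived equivalences developed by Rickard and Happel; concretely, a derived equivalence is realized by a two-sided tilting complex, and Hochschild cohomology $\HH^i(\La) = \Ext^i_{\La^{\en}}(\La,\La)$ is preserved under the induced equivalence of derived categories of bimodules. In particular $\dim_k \HH^i(\La_\e) = \dim_k \HH^i(\La_{\e'})$ for all $i$, contradicting the assumption that these dimensions differ for some $i$. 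Hence no such equivalence $\Db(\qgr A_\e) \cong \Db(\qgr A_{\e'})$ can exist, which is the desired conclusion.

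I do not anticipate a genuine obstacle here: the lemma is essentially a packaging of \thmref{thm.main} (2) together with a classical derived invariant. The only point requiring care is citing the derived invariance of Hochschild cohomology in the precise form needed (derived equivalence of finite-dimensional algebras implying degreewise isomorphism of $\HH^\bullet$), and confirming that $\La_\e$ and $\La_{\e'}$ genuinely satisfy the finiteness hypotheses of that result, both of which are guaranteed by \thmref{thm.main} (1).
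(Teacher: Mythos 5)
Your proposal is correct and matches the paper's proof essentially verbatim: the paper also argues by contraposition, composes the assumed equivalence with the two equivalences $\Db(\qgr A_\e) \cong \Db(\mod \La_\e)$ and $\Db(\qgr A_{\e'}) \cong \Db(\mod \La_{\e'})$ from \thmref{thm.main} (2), and then invokes the derived invariance of Hochschild cohomology, citing Rickard's result (\cite[Proposition 2.5]{Ri}), exactly the fact you describe via two-sided tilting complexes. No gap; nothing further is needed.
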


\begin{proof}
We show the contraposition.
If $\Db(\qgr A_{\e}) \cong \Db(\qgr A_{\e'})$, then we have
\[ \Db(\mod \La_{\e})\cong  \Db(\qgr A_{\e})\cong \Db(\qgr A_{\e'}) \cong \Db(\mod \La_{\e'}),\]
so it follows that $\dim_k \HH^i(\La_\e)=\dim_k \HH^i(\La_{\e'})$ for any $i$
by \cite[Proposition 2.5]{Ri}.
\end{proof}

Let $\La$ be a finite-dimensional algebra of the form $kQ/I$ where $Q$ is a finite quiver
and $I$ is an admissible ideal.
We denote by $G^0$ the set of all vertices of $Q$, by $G^1$ the set of all arrows of $Q$,
and by $G^2$ a minimal set of uniform generators of $I$.
For $h \in G^i$, we write $s(h)$ for the source and $t(h)$ for the target.
We now assume that $\gldim \La=2$. 
Then $\projdim_{\La^{\en}} \La = 2$, so by \cite[Theorem 2.9]{GS},
we can construct explicitly a minimal projective resolution of $\La$ 
\begin{align} \label{mpr}
0\longrightarrow P^{2}\stackrel{A^{2}}{\longrightarrow}
P^{1} \stackrel{A^{1}}{\longrightarrow}
P^{0}\stackrel{\partial}{\longrightarrow} \La
\longrightarrow 0
\end{align}
as a right $\La^{\en}$-module
where 
\begin{align*}
P^{i}:=\bigoplus_{h \in G^{i}}\La s(h)\otimes t(h)\La
\end{align*}
for $i=0,1,2$.
See \cite[Section 2]{GS} for details of the constructions of $A^1$ and $A^2$.
By applying $ (-)^*:=\Hom_{\La^{\rm e}}(-,\La)$ to (\ref{mpr}),
we have the Hochschild complex 
\begin{align} \label{hcpx}
0 \longrightarrow (P^{0})^* \stackrel{(A^1)^*}{\longrightarrow} 
  (P^{1})^* \stackrel{(A^2)^*}{\longrightarrow} 
  (P^{2})^* \longrightarrow 0. 
\end{align}
The Hochschild cohomology $\HH^i(\La)$ can be described by the cohomology of this complex.
Clearly, $\HH^{i}(\La)=0$ for $i\geq 3$.
Moreover, it is known that if the quiver $Q$ is connected and has no oriented cycles,
then $\dim_k \HH^0(\La) =1$,
and hence in order to calculate $\dim_k \HH^1(\La)$ and $\dim_k \HH^2(\La)$,
it is enough to calculate $\dim_k  (P^{0})^*, \dim_k  (P^{1})^*, \dim_k  (P^{2})^*$ and $\dim_k \Im (A^2)^*$.

\begin{dfn}
\begin{enumerate}
\item For $\ga$ in $G^0$, we define the right $\La^{\en}$-homomorphism $\th_{\ga} \in  (P^{0})^*$ by
\[
\th_{g}(s(h)\otimes t(h))=
\begin{cases}
g & \text{if }h=\ga,\\
0 & \text{otherwise.}
\end{cases}
\]
\item For $u, \ga$ in $G^1$ with $s(u)=s(\ga), t(u)=t(\ga)$, we define the right $\La^{\en}$-homomorphism $\th_{u, \ga} \in  (P^{1})^*$ by
\[
\th_{u, \ga}(s(h)\otimes t(h))=
\begin{cases}
u & \text{if }h=\ga,\\
0 & \text{otherwise.}
\end{cases}
\]

\item For a path $u$ in $Q$ and $\ga$ in $G^2$ with $s(u)=s(\ga), t(u)=t(\ga)$, we define the right $\La^{\en}$-homomorphism $\th_{u, \ga} \in  (P^{2})^*$ by
\[
\th_{u, \ga}(s(h)\otimes t(h))=
\begin{cases}
u & \text{if }h=\ga,\\
0 & \text{otherwise.}
\end{cases}
\]
\end{enumerate}
\end{dfn}

\noindent \textbf{The case \textrm{ (QS1)}.}
Now, let us consider the case (QS1).
In this case, $ A_\e =S_\e/(f_\e)$ where
$S_\e=k[x,y,z,w]$ and $f_\e= x^2+y^2+z^2+w^2$.
Since $\mnull_{\F_2} \De_{\e}=1$, we have $\al =|\M_\e|=2$
and therefore
$\uCM(A) \cong \Db(\mod k^2)$.
It is easy to see that
\[
\left(
\begin{pmatrix}
x+iw &y+iz \\
y-iz &-x+iw
\end{pmatrix},
\begin{pmatrix}
x-iw &y+iz \\
y-iz &-x-iw
\end{pmatrix}
\right)
\;\text{and}\;
\left(
\begin{pmatrix}
x-iw &y+iz \\
y-iz &-x-iw
\end{pmatrix},
\begin{pmatrix}
x+iw &y+iz \\
y-iz &-x+iw
\end{pmatrix}
\right)
\]
are matrix factorizations of $f_\e$ in $S_\e$ where $i:=\sqrt{-1}$.
Thus 
\[X_1 := \Coker \begin{pmatrix}
x+iw &y+iz \\
y-iz &-x+iw
\end{pmatrix}\cdot ,\;
X_2 := \Coker \begin{pmatrix}
x-iw &y+iz \\
y-iz &-x-iw
\end{pmatrix}\cdot \;
\in \M_\e. \]
Hence $\La_\e$ is given by the quiver
\begin{align*}
\xymatrix @R=1.5pc@C=4pc{
1 \ar@<0.5ex>[rd]^(0.4){a} \ar@<-0.5ex>[rd]_(0.4){b} \\
&3
\ar@<-1.8ex>[r]|(0.6){w}
\ar@<-0.6ex>[r]|(0.4){z}
\ar@<0.6ex>[r]|(0.6){y}
\ar@<1.8ex>[r]|(0.4){x}
&4 \\
2 \ar@<0.5ex>[ru]^(0.4){c} \ar@<-0.5ex>[ru]_(0.4){d}
}
\end{align*}
with relations
\begin{align*}
&g_1:=ax+iaw+by-ibz=0,
&&g_2:=ay+iaz-bx+ibw=0,\\
&g_3:=cx-icw+dy-idz=0,
&&g_4:=cy+icz-dx-idw=0.
\end{align*}
Since $\gldim \La_\e= 2$,
we can construct the Hochschild complex (\ref{hcpx}).
Since the quiver is connected and has no oriented cycles,
it follows that $\dim_k \HH^0(\La_\e) =1$.
One can verify that $\dim_k  (P^{0})^*=4, \dim_k  (P^{1})^*=24, \dim_k  (P^{2})^*=24$ and $\dim_k \Im (A^2)^*=15$.
In fact, $(P^{0})^*$ has a basis $\{\th_{e_1},\dots,\th_{e_4}\}$,
$(P^{1})^*$ has an ordered basis
\begin{align*}\Theta^1 = 
(&\th_{a,a},\th_{b,a},\th_{a,b},\th_{b,b},
\th_{c,c},\th_{d,c},\th_{c,d},\th_{d,d},
\th_{x,x},\th_{y,x},\th_{z,x},\th_{w,x},\\
&\th_{x,y},\th_{y,y},\th_{z,y},\th_{w,y},
\th_{x,z},\th_{y,z},\th_{z,z},\th_{w,z},
\th_{x,w},\th_{y,w},\th_{z,w},\th_{w,w}
),
\end{align*}
and $(P^{2})^*$ has an ordered basis 
\begin{align*}\Theta^2 = 
(&\th_{ay,g_1},\th_{iaz,g_1},\th_{iaw,g_1},
\th_{by,g_1},\th_{ibz,g_1},\th_{ibw,g_1},
\th_{ay,g_2},\th_{iaz,g_2},\th_{iaw,g_2},
\th_{by,g_2},\th_{ibz,g_2},\th_{ibw,g_2},\\
&\th_{cy,g_3},\th_{icz,g_3},\th_{icw,g_3},
\th_{dy,g_3},\th_{idz,g_3},\th_{idw,g_3},
\th_{cy,g_4},\th_{icz,g_4},\th_{icw,g_4},
\th_{dy,g_4},\th_{idz,g_4},\th_{idw,g_4}
).
\end{align*}
Moreover, one can verify that the matrix representation of $(A^2)^*$ with respect to $\Theta^1$ and $\Theta^2$ is
\footnotesize
\[\scriptstyle \left(\begin{array}{cccccccccccccccccccccccc} 
0 & 1 & 1 & 0 & 0 & 0 & 0 & 0 & 0 & 1 & 0 & 0 & 1 & 0 & 0 & 0 & -1 & 0 & 0 & 0 & 0 & 1 & 0 & 0\\
0 & 1 & -1 & 0 & 0 & 0 & 0 & 0 & 0 & 0 & 1 & 0 & 1 & 0 & 0 & 0 & -1 & 0 & 0 & 0 & 0 & 0 & 1 & 0\\
0 & 0 & 0 & 0 & 0 & 0 & 0 & 0 & -1 & 0 & 0 & 1 & 0 & 0 & 0 & 0 & 0 & 0 & 0 & 0 & -1 & 0 & 0 & 1\\
-1 & 0 & 0 & 1 & 0 & 0 & 0 & 0 & -1 & 0 & 0 & 0 & 0 & 1 & 0 & 0 & 0 & -1 & 0 & 0 & -1 & 0 & 0 & 0\\
1 & 0 & 0 & -1 & 0 & 0 & 0 & 0 & 1 & 0 & 0 & 0 & 0 & 0 & 1 & 0 & 0 & 0 & -1 & 0 & 1 & 0 & 0 & 0\\
0 & 2 & 0 & 0 & 0 & 0 & 0 & 0 & 0 & 0 & 0 & 0 & 1 & 0 & 0 & 1 & -1 & 0 & 0 & -1 & 0 & 0 & 0 & 0\\
1 & 0 & 0 & -1 & 0 & 0 & 0 & 0 & -1 & 0 & 0 & 0 & 0 & 1 & 0 & 0 & 0 & 1 & 0 & 0 & 1 & 0 & 0 & 0\\
1 & 0 & 0 & -1 & 0 & 0 & 0 & 0 & -1 & 0 & 0 & 0 & 0 & 0 & 1 & 0 & 0 & 0 & 1 & 0 & 1 & 0 & 0 & 0\\
0 & 0 & 2 & 0 & 0 & 0 & 0 & 0 & 0 & 0 & 0 & 0 & -1 & 0 & 0 & 1 & -1 & 0 & 0 & 1 & 0 & 0 & 0 & 0\\
0 & 1 & 1 & 0 & 0 & 0 & 0 & 0 & 0 & -1 & 0 & 0 & -1 & 0 & 0 & 0 & -1 & 0 & 0 & 0 & 0 & 1 & 0 & 0\\
0 & 1 & -1 & 0 & 0 & 0 & 0 & 0 & 0 & 0 & -1 & 0 & 1 & 0 & 0 & 0 & 1 & 0 & 0 & 0 & 0 & 0 & 1 & 0\\
0 & 0 & 0 & 0 & 0 & 0 & 0 & 0 & -1 & 0 & 0 & -1 & 0 & 0 & 0 & 0 & 0 & 0 & 0 & 0 & 1 & 0 & 0 & 1\\
0 & 0 & 0 & 0 & 0 & 1 & 1 & 0 & 0 & 1 & 0 & 0 & 1 & 0 & 0 & 0 & -1 & 0 & 0 & 0 & 0 & -1 & 0 & 0\\
0 & 0 & 0 & 0 & 0 & 1 & -1 & 0 & 0 & 0 & 1 & 0 & 1 & 0 & 0 & 0 & -1 & 0 & 0 & 0 & 0 & 0 & -1 & 0\\
0 & 0 & 0 & 0 & 0 & 0 & 0 & 0 & 1 & 0 & 0 & 1 & 0 & 0 & 0 & 0 & 0 & 0 & 0 & 0 & -1 & 0 & 0 & -1\\
0 & 0 & 0 & 0 & -1 & 0 & 0 & 1 & -1 & 0 & 0 & 0 & 0 & 1 & 0 & 0 & 0 & -1 & 0 & 0 & 1 & 0 & 0 & 0\\
0 & 0 & 0 & 0 & 1 & 0 & 0 & -1 & 1 & 0 & 0 & 0 & 0 & 0 & 1 & 0 & 0 & 0 & -1 & 0 & -1 & 0 & 0 & 0\\
0 & 0 & 0 & 0 & 0 & -2 & 0 & 0 & 0 & 0 & 0 & 0 & -1 & 0 & 0 & 1 & 1 & 0 & 0 & -1 & 0 & 0 & 0 & 0\\
0 & 0 & 0 & 0 & 1 & 0 & 0 & -1 & -1 & 0 & 0 & 0 & 0 & 1 & 0 & 0 & 0 & 1 & 0 & 0 & -1 & 0 & 0 & 0\\
0 & 0 & 0 & 0 & 1 & 0 & 0 & -1 & -1 & 0 & 0 & 0 & 0 & 0 & 1 & 0 & 0 & 0 & 1 & 0 & -1 & 0 & 0 & 0\\
0 & 0 & 0 & 0 & 0 & 0 & -2 & 0 & 0 & 0 & 0 & 0 & 1 & 0 & 0 & 1 & 1 & 0 & 0 & 1 & 0 & 0 & 0 & 0\\
0 & 0 & 0 & 0 & 0 & 1 & 1 & 0 & 0 & -1 & 0 & 0 & -1 & 0 & 0 & 0 & -1 & 0 & 0 & 0 & 0 & -1 & 0 & 0\\
0 & 0 & 0 & 0 & 0 & 1 & -1 & 0 & 0 & 0 & -1 & 0 & 1 & 0 & 0 & 0 & 1 & 0 & 0 & 0 & 0 & 0 & -1 & 0\\
0 & 0 & 0 & 0 & 0 & 0 & 0 & 0 & 1 & 0 & 0 & -1 & 0 & 0 & 0 & 0 & 0 & 0 & 0 & 0 & 1 & 0 & 0 & -1
\end{array}\right)\]
\normalsize
and the rank of this matrix is $15$.
Therefore, we conclude that
\begin{align} \label{HH1}
\begin{split}
&\dim_k \HH^0(\La_\e) =1,\\
&\dim_k  \HH^1(\La_\e)=6,\\
&\dim_k  \HH^2(\La_\e)= 9,\\ 
&\dim_k \HH^{i}(\La_\e)=0 \;\; (i\geq 3). 
\end{split}
\end{align}

\begin{rem}
The above algebraic computation allows us to compare this case and the next case.
On the other hand, in this case, we have $\Db(\mod \La_\e) \cong \Db(\qgr A_\e) \cong  \Db(\coh \P^1\times \P^1)$, so it is possible to compute the Hochschild cohomology in a sheaf-theoretical way.
In fact, it follows that  
\begin{align*}
&\dim_k \HH^0(\P^1\times \P^1) =1, \\
&\dim_k  \HH^1(\P^1\times \P^1)=6,\\
&\dim_k  \HH^2(\P^1\times \P^1)= 9, \\
&\dim_k \HH^{i}(\P^1\times \P^1)=0 \;\; (i\geq 3). 
\end{align*}
by Hochschild-Kostant-Rosenberg theorem (\cite[Corollary 0.6]{Y}). See \cite[Example 6]{Be} for example.
\end{rem} 

\noindent \textbf{The case \textrm{ (QS2)}.}
Next, let us discuss the case (QS2).
In this case, $A_\e =S_\e/(f_\e)$ where
$S_\e=k\<x, y, z, w\>/(xy+yx, xz+zx, xw-wx, zy+yz, wy+yw, zw+wz)$
and $f_\e= x^2+y^2+z^2+w^2$.
Since $\mnull_{\F_2} \De_{\e}=1$, we have $\al =|\M_\e|=2$
and therefore $\uCM(A) \cong \Db(\mod k^2)$.
It is easy to see that
\[
\left(
\begin{pmatrix}
x+iw &y+z \\
y+z &x-iw
\end{pmatrix},
\begin{pmatrix}
x-iw &y+z \\
y+z &x+iw
\end{pmatrix}
\right)
\;\text{and}\;
\left(
\begin{pmatrix}
x+iw &y-z \\
y-z &x-iw
\end{pmatrix},
\begin{pmatrix}
x-iw &y-z \\
y-z &x+iw
\end{pmatrix}
\right)
\]
induce noncommutative matrix factorizations of $f_\e$ in $S_\e$  where $i:=\sqrt{-1}$
 (see \cite[Theorem 4.4 (2)]{MUm}).
Thus 
\[X_1 := \Coker \begin{pmatrix}
x+iw &y+z \\
y+z &x-iw
\end{pmatrix}\cdot,\;
X_2 := \Coker \begin{pmatrix}
x+iw &y-z \\
y-z &x-iw
\end{pmatrix}\cdot \;
\in \M_\e \]
(see \cite[Theorem 6.5]{MUm}).
Hence $\La_\e$ is given by the quiver
\begin{align*}
\xymatrix @R=1.5pc@C=4pc{
1 \ar@<0.5ex>[rd]^(0.4){a} \ar@<-0.5ex>[rd]_(0.4){b} \\
&3
\ar@<-1.8ex>[r]|(0.6){w}
\ar@<-0.6ex>[r]|(0.4){z}
\ar@<0.6ex>[r]|(0.6){y}
\ar@<1.8ex>[r]|(0.4){x}
&4 \\
2 \ar@<0.5ex>[ru]^(0.4){c} \ar@<-0.5ex>[ru]_(0.4){d}
}
\end{align*}
with relations
\begin{align*}
&g_1:=ax+iaw+by+bz=0,
&&g_2:=ay+az+bx-ibw=0,\\
&g_3:=cx+icw+dy-dz=0,
&&g_4:=cy-cz+dx-idw=0.
\end{align*}
Since $\gldim \La_\e= 2$,
we can construct the Hochschild complex (\ref{hcpx}).
Since the quiver is connected and has no oriented cycles,
it follows that $\dim_k \HH^0(\La_\e) =1$.
One can verify that $\dim_k  (P^{0})^*=4, \dim_k  (P^{1})^*=24, \dim_k  (P^{2})^*=24$ and $\dim_k \Im (A^2)^*=20$.
In fact, $(P^{0})^*$ has a basis $\{\th_{e_1},\dots,\th_{e_4}\}$,
$(P^{1})^*$ has an ordered basis
\begin{align*}\Theta^1 = 
(&\th_{a,a},\th_{b,a},\th_{a,b},\th_{b,b},
\th_{c,c},\th_{d,c},\th_{c,d},\th_{d,d},
\th_{x,x},\th_{y,x},\th_{z,x},\th_{w,x},\\
&\th_{x,y},\th_{y,y},\th_{z,y},\th_{w,y},
\th_{x,z},\th_{y,z},\th_{z,z},\th_{w,z},
\th_{x,w},\th_{y,w},\th_{z,w},\th_{w,w}
),
\end{align*}
and $(P^{2})^*$ has an ordered basis 
\begin{align*}\Theta^2 = 
(&\th_{ay,g_1},\th_{az,g_1},\th_{iaw,g_1},
\th_{by,g_1},\th_{bz,g_1},\th_{ibw,g_1},
\th_{ay,g_2},\th_{az,g_2},\th_{iaw,g_2},
\th_{by,g_2},\th_{bz,g_2},\th_{ibw,g_2},\\
&\th_{cy,g_3},\th_{cz,g_3},\th_{icw,g_3},
\th_{dy,g_3},\th_{dz,g_3},\th_{idw,g_3},
\th_{cy,g_4},\th_{cz,g_4},\th_{icw,g_4},
\th_{dy,g_4},\th_{dz,g_4},\th_{idw,g_4}
).
\end{align*}
Moreover, one can verify that the matrix representation of $(A^2)^*$ with respect to $\Theta^1$ and $\Theta^2$ is
\footnotesize
\[\scriptstyle \left(\begin{array}{cccccccccccccccccccccccc} 
0 & -1 & 1 & 0 & 0 & 0 & 0 & 0 & 0 & 1 & 0 & 0 & -1 & 0 & 0 & 0 & -1 & 0 & 0 & 0 & 0 & 1 & 0 & 0\\
0 & -1 & 1 & 0 & 0 & 0 & 0 & 0 & 0 & 0 & 1 & 0 & -1 & 0 & 0 & 0 & -1 & 0 & 0 & 0 & 0 & 0 & 1 & 0\\
0 & 0 & 0 & 0 & 0 & 0 & 0 & 0 & -1 & 0 & 0 & 1 & 0 & 0 & 0 & 0 & 0 & 0 & 0 & 0 & -1 & 0 & 0 & 1\\
-1 & 0 & 0 & 1 & 0 & 0 & 0 & 0 & -1 & 0 & 0 & 0 & 0 & 1 & 0 & 0 & 0 & 1 & 0 & 0 & -1 & 0 & 0 & 0\\
-1 & 0 & 0 & 1 & 0 & 0 & 0 & 0 & -1 & 0 & 0 & 0 & 0 & 0 & 1 & 0 & 0 & 0 & 1 & 0 & -1 & 0 & 0 & 0\\
0 & 2 & 0 & 0 & 0 & 0 & 0 & 0 & 0 & 0 & 0 & 0 & 1 & 0 & 0 & 1 & 1 & 0 & 0 & 1 & 0 & 0 & 0 & 0\\
1 & 0 & 0 & -1 & 0 & 0 & 0 & 0 & -1 & 0 & 0 & 0 & 0 & 1 & 0 & 0 & 0 & 1 & 0 & 0 & 1 & 0 & 0 & 0\\
1 & 0 & 0 & -1 & 0 & 0 & 0 & 0 & -1 & 0 & 0 & 0 & 0 & 0 & 1 & 0 & 0 & 0 & 1 & 0 & 1 & 0 & 0 & 0\\
0 & 0 & -2 & 0 & 0 & 0 & 0 & 0 & 0 & 0 & 0 & 0 & -1 & 0 & 0 & 1 & -1 & 0 & 0 & 1 & 0 & 0 & 0 & 0\\
0 & 1 & -1 & 0 & 0 & 0 & 0 & 0 & 0 & 1 & 0 & 0 & -1 & 0 & 0 & 0 & -1 & 0 & 0 & 0 & 0 & -1 & 0 & 0\\
0 & 1 & -1 & 0 & 0 & 0 & 0 & 0 & 0 & 0 & 1 & 0 & -1 & 0 & 0 & 0 & -1 & 0 & 0 & 0 & 0 & 0 & -1 & 0\\
0 & 0 & 0 & 0 & 0 & 0 & 0 & 0 & 1 & 0 & 0 & 1 & 0 & 0 & 0 & 0 & 0 & 0 & 0 & 0 & -1 & 0 & 0 & -1\\
0 & 0 & 0 & 0 & 0 & -1 & 1 & 0 & 0 & 1 & 0 & 0 & -1 & 0 & 0 & 0 & 1 & 0 & 0 & 0 & 0 & 1 & 0 & 0\\
0 & 0 & 0 & 0 & 0 & 1 & -1 & 0 & 0 & 0 & 1 & 0 & 1 & 0 & 0 & 0 & -1 & 0 & 0 & 0 & 0 & 0 & 1 & 0\\
0 & 0 & 0 & 0 & 0 & 0 & 0 & 0 & -1 & 0 & 0 & 1 & 0 & 0 & 0 & 0 & 0 & 0 & 0 & 0 & -1 & 0 & 0 & 1\\
0 & 0 & 0 & 0 & -1 & 0 & 0 & 1 & -1 & 0 & 0 & 0 & 0 & 1 & 0 & 0 & 0 & -1 & 0 & 0 & -1 & 0 & 0 & 0\\
0 & 0 & 0 & 0 & 1 & 0 & 0 & -1 & 1 & 0 & 0 & 0 & 0 & 0 & 1 & 0 & 0 & 0 & -1 & 0 & 1 & 0 & 0 & 0\\
0 & 0 & 0 & 0 & 0 & 2 & 0 & 0 & 0 & 0 & 0 & 0 & 1 & 0 & 0 & 1 & -1 & 0 & 0 & -1 & 0 & 0 & 0 & 0\\
0 & 0 & 0 & 0 & 1 & 0 & 0 & -1 & -1 & 0 & 0 & 0 & 0 & 1 & 0 & 0 & 0 & -1 & 0 & 0 & 1 & 0 & 0 & 0\\
0 & 0 & 0 & 0 & -1 & 0 & 0 & 1 & 1 & 0 & 0 & 0 & 0 & 0 & 1 & 0 & 0 & 0 & -1 & 0 & -1 & 0 & 0 & 0\\
0 & 0 & 0 & 0 & 0 & 0 & -2 & 0 & 0 & 0 & 0 & 0 & -1 & 0 & 0 & 1 & 1 & 0 & 0 & -1 & 0 & 0 & 0 & 0\\
0 & 0 & 0 & 0 & 0 & 1 & -1 & 0 & 0 & 1 & 0 & 0 & -1 & 0 & 0 & 0 & 1 & 0 & 0 & 0 & 0 & -1 & 0 & 0\\
0 & 0 & 0 & 0 & 0 & -1 & 1 & 0 & 0 & 0 & 1 & 0 & 1 & 0 & 0 & 0 & -1 & 0 & 0 & 0 & 0 & 0 & -1 & 0\\
0 & 0 & 0 & 0 & 0 & 0 & 0 & 0 & 1 & 0 & 0 & 1 & 0 & 0 & 0 & 0 & 0 & 0 & 0 & 0 & -1 & 0 & 0 & -1
\end{array}\right)\]
\normalsize
and the rank of this matrix is $20$.
Therefore, we conclude that
\begin{align} \label{HH2}
\begin{split}
&\dim_k \HH^0(\La_\e) =1,\\
&\dim_k  \HH^1(\La_\e)=1,\\
&\dim_k  \HH^2(\La_\e)= 4,\\
&\dim_k \HH^{i}(\La_\e)=0 \;\; (i\geq 3). 
\end{split}
\end{align}

\noindent \textbf{The case \textrm{ (QS3)}.}
Let us focus on the case (QS3).
In this case, $A_\e =S_\e/(f_\e)$ where
$S_\e=k\<x, y, z, w\>/(xy+yx, xz+zx, xw+wx, zy+yz, wy+yw, zw+wz)$
and $f_\e= x^2+y^2+z^2+w^2$.
By \exref{ex.-1}, 
$\La_\e$ is given by the quiver
\begin{align*}
\xymatrix @R=0pc@C=4pc{
1 \ar[rrdddd]|(0.4){a_{1}}\\
2 \ar[rrddd]|(0.25){a_{2}} \\
3 \ar[rrdd]|(0.4){a_{3}}\\
4 \ar[rrd]|(0.25){a_{4}}\\
&&9
\ar@<-1.8ex>[r]|(0.6){w}
\ar@<-0.6ex>[r]|(0.4){z}
\ar@<0.6ex>[r]|(0.6){y}
\ar@<1.8ex>[r]|(0.4){x}
&10 \\
5 \ar[rru]|(0.25){a_{5}}\\
6 \ar[rruu]|(0.4){a_{6}}\\
7 \ar[rruuu]|(0.25){a_{7}}\\
8 \ar[rruuuu]|(0.4){a_{8}}
}
\end{align*}
with relations
\begin{align*}
&g_{1}:=a_{1}x +a_{1}y +a_{1}z +a_{1}w=0, 
&&g_{2}:=a_{2}x +a_{2}y +a_{2}z -a_{2}w=0,\\
&g_{3}:=a_{3}x +a_{3}y -a_{3}z +a_{3}w=0, 
&&g_{4}:=a_{4}x +a_{4}y -a_{4}z -a_{4}w=0,\\
&g_{5}:=a_{5}x -a_{5}y +a_{5}z +a_{5}w=0, 
&&g_{6}:=a_{6}x -a_{6}y +a_{6}z -a_{6}w=0,\\
&g_{7}:=a_{7}x -a_{7}y -a_{7}z +a_{7}w=0, 
&&g_{8}:=a_{8}x -a_{8}y -a_{8}z -a_{8}w=0.
\end{align*}
Since $\gldim \La_\e= 2$,
we can construct the Hochschild complex (\ref{hcpx}).
Since the quiver is connected and has no oriented cycles,
it follows that $\dim_k \HH^0(\La_\e) =1$.
One can verify that $\dim_k  (P^{0})^*=10, \dim_k  (P^{1})^*=24, \dim_k  (P^{2})^*=24$ and $\dim_k \Im (A^2)^*=15$.
In fact, $(P^{0})^*$ has a basis $\{\th_{e_1},\dots,\th_{e_{10}}\}$,
$(P^{1})^*$ has an ordered basis
\begin{align*}\Theta^1 = 
(&\th_{a_1,a_1},\th_{a_2,a_2},\th_{a_3,a_3},\th_{a_4,a_4},
\th_{a_5,a_5},\th_{a_6,a_6},\th_{a_7,a_7},\th_{a_8,a_8},
\th_{x,x},\th_{y,x},\th_{z,x},\th_{w,x},\\
&\th_{x,y},\th_{y,y},\th_{z,y},\th_{w,y},
\th_{x,z},\th_{y,z},\th_{z,z},\th_{w,z},
\th_{x,w},\th_{y,w},\th_{z,w},\th_{w,w}
),
\end{align*}
and $(P^{2})^*$ has an ordered basis 
\begin{align*}\Theta^2 = 
(&\th_{a_1y,g_1},\th_{a_1z,g_1},\th_{a_1w,g_1},
\th_{a_2y,g_2},\th_{a_2z,g_2},\th_{a_2w,g_2},
\th_{a_3y,g_3},\th_{a_3z,g_3},\th_{a_3w,g_3},
\th_{a_4y,g_4},\th_{a_4z,g_4},\th_{a_4w,g_4},\\
&\th_{a_5y,g_5},\th_{a_5z,g_5},\th_{a_5w,g_5},
\th_{a_6y,g_6},\th_{a_6z,g_6},\th_{a_6w,g_6},
\th_{a_7y,g_7},\th_{a_7z,g_7},\th_{a_7w,g_7},
\th_{a_8y,g_8},\th_{a_8z,g_8},\th_{a_8w,g_8}
).
\end{align*}
Moreover, one can verify that the matrix representation of $(A^2)^*$ with respect to $\Theta^1$ and $\Theta^2$ is
\footnotesize
\[ \left(\begin{array}{cccccccccccccccccccccccc}
0&0&0&0&0&0&0&0&-1 & 1 & 0 & 0 & -1 & 1 & 0 & 0 & -1 & 1 & 0 & 0 & -1 & 1 & 0 & 0\\
0&0&0&0&0&0&0&0&-1 & 0 & 1 & 0 & -1 & 0 & 1 & 0 & -1 & 0 & 1 & 0 & -1 & 0 & 1 & 0\\
0&0&0&0&0&0&0&0&-1 & 0 & 0 & 1 & -1 & 0 & 0 & 1 & -1 & 0 & 0 & 1 & -1 & 0 & 0 & 1\\
0&0&0&0&0&0&0&0&-1 & 1 & 0 & 0 & -1 & 1 & 0 & 0 & -1 & 1 & 0 & 0 & 1 & -1 & 0 & 0\\
0&0&0&0&0&0&0&0&-1 & 0 & 1 & 0 & -1 & 0 & 1 & 0 & -1 & 0 & 1 & 0 & 1 & 0 & -1 & 0\\
0&0&0&0&0&0&0&0&1 & 0 & 0 & 1 & 1 & 0 & 0 & 1 & 1 & 0 & 0 & 1 & -1 & 0 & 0 & -1\\
0&0&0&0&0&0&0&0&-1 & 1 & 0 & 0 & -1 & 1 & 0 & 0 & 1 & -1 & 0 & 0 & -1 & 1 & 0 & 0\\
0&0&0&0&0&0&0&0&1 & 0 & 1 & 0 & 1 & 0 & 1 & 0 & -1 & 0 & -1 & 0 & 1 & 0 & 1 & 0\\
0&0&0&0&0&0&0&0&-1 & 0 & 0 & 1 & -1 & 0 & 0 & 1 & 1 & 0 & 0 & -1 & -1 & 0 & 0 & 1\\
0&0&0&0&0&0&0&0&-1 & 1 & 0 & 0 & -1 & 1 & 0 & 0 & 1 & -1 & 0 & 0 & 1 & -1 & 0 & 0\\
0&0&0&0&0&0&0&0&1 & 0 & 1 & 0 & 1 & 0 & 1 & 0 & -1 & 0 & -1 & 0 & -1 & 0 & -1 & 0\\
0&0&0&0&0&0&0&0&1 & 0 & 0 & 1 & 1 & 0 & 0 & 1 & -1 & 0 & 0 & -1 & -1 & 0 & 0 & -1\\
0&0&0&0&0&0&0&0&1 & 1 & 0 & 0 & -1 & -1 & 0 & 0 & 1 & 1 & 0 & 0 & 1 & 1 & 0 & 0\\
0&0&0&0&0&0&0&0&-1 & 0 & 1 & 0 & 1 & 0 & -1 & 0 & -1 & 0 & 1 & 0 & -1 & 0 & 1 & 0\\
0&0&0&0&0&0&0&0&-1 & 0 & 0 & 1 & 1 & 0 & 0 & -1 & -1 & 0 & 0 & 1 & -1 & 0 & 0 & 1\\
0&0&0&0&0&0&0&0&1 & 1 & 0 & 0 & -1 & -1 & 0 & 0 & 1 & 1 & 0 & 0 & -1 & -1 & 0 & 0\\
0&0&0&0&0&0&0&0&-1 & 0 & 1 & 0 & 1 & 0 & -1 & 0 & -1 & 0 & 1 & 0 & 1 & 0 & -1 & 0\\
0&0&0&0&0&0&0&0&1 & 0 & 0 & 1 & -1 & 0 & 0 & -1 & 1 & 0 & 0 & 1 & -1 & 0 & 0 & -1\\
0&0&0&0&0&0&0&0&1 & 1 & 0 & 0 & -1 & -1 & 0 & 0 & -1 & -1 & 0 & 0 & 1 & 1 & 0 & 0\\
0&0&0&0&0&0&0&0&1 & 0 & 1 & 0 & -1 & 0 & -1 & 0 & -1 & 0 & -1 & 0 & 1 & 0 & 1 & 0\\
0&0&0&0&0&0&0&0&-1 & 0 & 0 & 1 & 1 & 0 & 0 & -1 & 1 & 0 & 0 & -1 & -1 & 0 & 0 & 1\\
0&0&0&0&0&0&0&0&1 & 1 & 0 & 0 & -1 & -1 & 0 & 0 & -1 & -1 & 0 & 0 & -1 & -1 & 0 & 0\\
0&0&0&0&0&0&0&0&1 & 0 & 1 & 0 & -1 & 0 & -1 & 0 & -1 & 0 & -1 & 0 & -1 & 0 & -1 & 0\\
0&0&0&0&0&0&0&0&1 & 0 & 0 & 1 & -1 & 0 & 0 & -1 & -1 & 0 & 0 & -1 & -1 & 0 & 0 & -1
\end{array}\right)\]
\normalsize
and the rank of this matrix is $15$.
Therefore, we conclude that
\begin{align} \label{HH3}
\begin{split}
&\dim_k \HH^0(\La_\e) =1, \\
&\dim_k  \HH^1(\La_\e)=0,\\
&\dim_k  \HH^2(\La_\e)= 9, \\
&\dim_k \HH^{i}(\La_\e)=0 \;\; (i\geq 3). 
\end{split}
\end{align}

The results of the calculations are summarized in the following table.

\begin{table}[h] 
\centering
\begin{tabular}{lccc}  
\toprule
&(QS1) & (QS2)  &(QS3)  \\
\midrule
$\dim_k (P^{0})^*$&4&4&10 \\
$\dim_k (P^{1})^*$&24&24&24 \\
$\dim_k (P^{2})^*$&24&24&24 \\
$\dim_k \Im(A^{1})^*$&3 &3 &9 \\
$\dim_k \Im(A^{2})^*$&15&20&15 \\
$\dim_k \HH^0(\La_\e)$&1&1&1 \\
$\dim_k \HH^1(\La_\e)$&6&1&0 \\
$\dim_k \HH^2(\La_\e)$&9&4&9 \\
$\dim_k \HH^i(\La_\e)\; (i\geq 3)$&0&0&0  \\
\bottomrule
\end{tabular}
\caption{Calculation results on Hochschild cohomology of $\La_\e$}  
\label{tab.HH}
\end{table}
Combining  \propref{prop.GM}, \lemref{lem.HH}, and Table \ref{tab.HH}, 
we reach the following conclusion.

\begin{thm} 
Assume that  $n=4$. 
\begin{enumerate}
\item For any $A_\e$, the derived category $\Db(\qgr A_\e)$ is equivalent to exactly one of the following categories:
\begin{itemize}
\item $\Db(\qgr k[x, y, z, w]/(x^2+y^2+z^2+w^2)) \cong \Db(\coh \P^1\times \P^1)$;
\item $\Db(\qgr k\<x, y, z, w\>/(
xy+yx,\ xz+zx,\ xw-wx,\ zy+yz,\ wy+yw,\ zw+wz,\ x^2+y^2+z^2+w^2))$;
\item $\Db(\qgr k\<x, y, z, w\>/(
xy+yx,\ xz+zx,\ xw+wx,\ zy+yz,\ wy+yw,\ zw+wz,\ x^2+y^2+z^2+w^2))$.
\end{itemize}
\item For any $A_\e$ and $A_{\e'}$, the following are equivalent.
\begin{enumerate}
\item $G_{\e'}$ is obtained from $G_{\e}$ by applying finitely many mutations up to isomorphism.
\item $\GrMod A_{\e} \cong \GrMod A_{\e'}$.
\item $\qgr A_{\e} \cong \qgr A_{\e'}$.
\item $\Db(\qgr A_{\e})\cong \Db(\qgr A_{\e'})$.
\end{enumerate}
\end{enumerate}
\end{thm}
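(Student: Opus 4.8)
The plan is to assemble the pieces already in place rather than to prove anything new from scratch. For assertion~(1), the existence of an equivalence to one of the three listed categories is immediate from \propref{prop.GM}: every $\qgr A_{\e'}$, and hence every $\Db(\qgr A_{\e'})$, is equivalent to the one attached to (QS1), (QS2), or (QS3). For (QS1) the algebra $A_\e=k[x,y,z,w]/(x^2+y^2+z^2+w^2)$ is commutative, so $\qgr A_\e\cong\coh X$ for the smooth quadric surface $X\subset\P^3$; since a smooth quadric surface is $\P^1\times\P^1$, this gives $\Db(\qgr A_\e)\cong\Db(\coh \P^1\times\P^1)$, recording the first displayed equivalence.

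To upgrade ``one of'' to ``exactly one of'', I would show the three are pairwise non-equivalent by invoking \lemref{lem.HH} together with Table~\ref{tab.HH}. Reading off the Hochschild cohomology, the pairs $(\dim_k\HH^1(\La_\e),\dim_k\HH^2(\La_\e))$ are $(6,9)$, $(1,4)$, $(0,9)$ for (QS1), (QS2), (QS3); already the values of $\dim_k\HH^1$, namely $6,1,0$, are pairwise distinct, so by \lemref{lem.HH} no two of the three derived categories can be equivalent. The genuine input here is the case-by-case construction of the minimal bimodule resolution and the rank computations $\dim_k\Im(A^2)^*=15,20,15$, which are already carried out.

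For assertion~(2) I would run the cycle (a)$\,\Rightarrow\,$(b)$\,\Rightarrow\,$(c)$\,\Rightarrow\,$(d)$\,\Rightarrow\,$(a). The implication (a)$\,\Rightarrow\,$(b) is \lemref{lem.Zh} applied along a finite sequence of mutations. For (b)$\,\Rightarrow\,$(c), note that $\fdim A_\e$ is exactly the Serre subcategory of finite-length objects of $\GrMod A_\e$ (for a connected locally finite graded algebra, finite length and finite dimensionality coincide); this description is categorical, so any $k$-linear equivalence $\GrMod A_\e\cong\GrMod A_{\e'}$ carries $\fdim A_\e$ onto $\fdim A_{\e'}$ and descends to an equivalence of the Serre quotients $\qgr A_\e\cong\qgr A_{\e'}$. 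The step (c)$\,\Rightarrow\,$(d) is formal, as an equivalence of abelian categories induces one of their bounded derived categories.

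The crux is (d)$\,\Rightarrow\,$(a), which I would settle using the classification preceding \propref{prop.GM}. Suppose $\Db(\qgr A_\e)\cong\Db(\qgr A_{\e'})$. By that classification $G_\e$ is transformed by iterated mutations (up to isomorphism) into the graph of some (QS$i$) and $G_{\e'}$ into that of some (QS$j$); applying the already-proved implication (a)$\,\Rightarrow\,$(d) to each shows $\Db(\qgr A_\e)$ is equivalent to the derived category of type (QS$i$) and $\Db(\qgr A_{\e'})$ to that of type (QS$j$), whence these two are equivalent to each other. By assertion~(1) the three cases are pairwise non-equivalent, forcing $i=j$; therefore $G_\e$ and $G_{\e'}$ are mutation-equivalent to a common graph, hence to each other, which is~(a). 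I expect the real obstacle to lie not in this formal assembly but in the non-equivalence underpinning assertion~(1): everything funnels through the single invariant of \lemref{lem.HH}, so the whole classification rests on the explicit $(A^2)^*$ rank computations being correct---it is precisely the separation $15,20,15$ (and the resulting $\HH$-dimensions) that keeps the three cases apart.
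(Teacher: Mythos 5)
Your proposal is correct and follows essentially the same route as the paper, whose proof is the one-line assembly of \propref{prop.GM}, \lemref{lem.HH}, and Table~\ref{tab.HH}; your write-up simply makes explicit the details the paper leaves implicit (the categorical descent for (b)$\Rightarrow$(c), the cycle argument with (d)$\Rightarrow$(a) via the trichotomy, and the fact that mutation is involutive so mutation-equivalence is symmetric). No gaps.
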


\section*{Acknowledgments}
The author would like to thank Izuru Mori and Osamu Iyama for valuable comments and suggestions.
The author is also grateful to the referee for various comments that helped improve the exposition of the paper.


\begin{thebibliography}{99}
\bibitem{Am}
C. Amiot,
Preprojective algebras, singularity categories and orthogonal decompositions,
\textit{Algebras, quivers and representations}, 1--11, Abel Symp., 8, Springer, Heidelberg, 2013.

\bibitem{AZ}
M. Artin and J. J. Zhang,
Noncommutative projective schemes,
\textit{Adv. Math.} \textbf{109} (1994), no. 2, 228--287.

\bibitem{Be}
P. Belmans,
Hochschild cohomology of noncommutative planes and quadrics,
\textit{J. Noncommut. Geom.} \textbf{13} (2019), no. 2, 769--795. 

\bibitem{Bu}
R.-O. Buchweitz,
Maximal Cohen--Macaulay modules and Tate cohomology
over Gorenstein rings,
unpublished manuscript (1985).

\bibitem{BIY}
R.-O. Buchweitz, O. Iyama and K. Yamaura,
Tilting theory for Gorenstein rings in dimension one,
\textit{Forum Math. Sigma} \textbf{8} (2020), e36.

\bibitem{BLV}
R.-O. Buchweitz, G. Leuschke and M. Van den Bergh,
On the derived category of Grassmannians in arbitrary characteristic,
\textit{Compositio Math.} \textbf{151} (2015), no. 7, 1242--1264.

\bibitem{GL1}
W. Geigle and H. Lenzing,
A class of weighted projective curves arising in representation theory of finite-dimensional algebras,
\textit{Singularities, representation of algebras, and vector bundles (Lambrecht, 1985)}, Lecture Notes in Mathematics, 1273, Springer, Berlin, 1987, 265--297.

\bibitem{GL2}
W. Geigle and H. Lenzing,
Perpendicular categories with applications to representations and sheaves,
\textit{J. Algebra} \textbf{144} (1991), no. 2, 273--343.

\bibitem{GS}
E. L. Green and N. Snashall,
Projective bimodule resolutions of an algebra and vanishing of the second Hochschild cohomology group,
\textit{Forum Math.} \textbf{16} (2004), 17--36.

\bibitem{H}
D. Happel,
\textit{Triangulated categories in the representation theory of finite-dimensional algebras}, London Mathematical Society Lecture Note Series, 119, Cambridge University Press, Cambridge, 1988.

\bibitem{HIMO}
M. Herschend, O. Iyama, H. Minamoto and S. Oppermann,
Representation theory of Geigle-Lenzing complete intersections,
to appear in \textit{Mem. Amer. Math. Soc.}, \texttt{arXiv:1409.0668v3}.

\bibitem{HIO}
M. Herschend, O. Iyama and S. Oppermann,
$n$-Representation infinite algebras,
\textit{Adv. Math.} \textbf{252} (2014), 292--342.

\bibitem{HU}
A. Higashitani and K. Ueyama,
Combinatorial study of stable categories of graded Cohen--Macaulay modules over skew quadric hypersurfaces,
\textit{Collect. Math.}, in press.

\bibitem{HP}
L. Hille and M. Perling,
Tilting bundles on rational surfaces and quasi-hereditary algebras,
\textit{Ann. Inst. Fourier (Grenoble)} \textbf{64} (2014), no. 2, 625--644.

\bibitem{IY}
O. Iyama and D. Yang,
Quotients of triangulated categories and equivalences of Buchweitz, Orlov and Amiot--Guo--Keller,
\textit{Amer. J. Math.} \textbf{142} (2020), no. 5, 1641--1659.

\bibitem{KST}
H. Kajiura, K. Saito and A. Takahashi,
Matrix factorization and representations of quivers. II. Type ADE case,
\textit{Adv. Math.} \textbf{211} (2007), no. 1, 327--362.

\bibitem{K1}
M. M. Kapranov,
Derived category of coherent bundles on a quadric (Russian),
\textit{Funktsional. Anal. i Prilozhen.} \textbf{20} (1986), no. 2, 67;
translation in \textit{Funct. Anal. Appl.} \textbf{20} (1986), no. 2, 141--142.

\bibitem{K2}
M. M. Kapranov,
On the derived categories of coherent sheaves on some homogeneous spaces,
\textit{Invent. Math.} \textbf{92} (1988), no. 3, 479--508.

\bibitem{Min}
H. Minamoto,
Ampleness of two-sided tilting complexes, 
\textit{Int. Math. Res. Not.} (2012), no. 1, 67--101. 

\bibitem{MM}
H. Minamoto and I. Mori,
The structure of AS-Gorenstein algebras,
\textit{Adv. Math.} \textbf{226} (2011), no. 5, 4061--4095. 

\bibitem{Mrr}
I. Mori,
Riemann-Roch like theorem for triangulated categories,
\textit{J. Pure Appl. Algebra} \textbf{193} (2004), no. 1--3, 263--285.

\bibitem{MUs}
I. Mori and K. Ueyama,
Stable categories of graded maximal Cohen-Macaulay modules over noncommutative quotient singularities,
\textit{Adv. Math.} \textbf{297} (2016), 54--92.

\bibitem{MUq}
I. Mori and K. Ueyama,
A categorical characterization of quantum projective spaces,
to appear in \textit{J. Noncommut. Geom.}, \texttt{arXiv:1708.00167v3}.

\bibitem{MUm} 
I. Mori and K. Ueyama, 
Noncommutative matrix factorizations with an application to skew exterior algebras,
preprint, \texttt{arXiv:1806.07577}.

\bibitem{MUk} 
I. Mori and K. Ueyama, 
Noncommutative Kn\"orrer's periodicity theorem and noncommutative quadric hypersurfaces,
to appear in \textit{Algebra Number Theory}, \texttt{arXiv:1905.12266v3}.

\bibitem{dNV}
K. de Naeghel and M. Van den Bergh,
Ideal classes of three-dimensional Sklyanin algebras,
\textit{J. Algebra} \textbf{276} (2004), no. 2, 515--551.

\bibitem{O}
D. Orlov,
Derived categories of coherent sheaves and triangulated categories of singularities,
\textit{Algebra, arithmetic, and geometry: in honor of Yu. I. Manin}. Vol. II, 503--531,
Progr. Math., 270, Birkh\"auser Boston, Boston, MA, 2009.

\bibitem{Ri}
J. Rickard,
Derived equivalences as derived functors,
\textit{J. London Math. Soc. (2)} \textbf{43} (1991), no. 1, 37--48.

\bibitem{Ro}
D. Rogalski,
Noncommutative projective geometry,
\textit{Noncommutative algebraic geometry}, 13--70,
Math. Sci. Res. Inst. Publ., 64, Cambridge Univ. Press, New York, 2016.

\bibitem{S}
S. P. Smith,
Some finite-dimensional algebras related to elliptic curves,
\textit{Representation theory of algebras and related topics (Mexico City, 1994)}, 315--348, 
CMS Conf. Proc., 19, Amer. Math. Soc., Providence, RI, 1996.

\bibitem{SV}
S. P. Smith and M. Van den Bergh,
Noncommutative quadric surfaces,
\textit{J. Noncommut. Geom.} \textbf{7} (2013), no. 3, 817--856.

\bibitem{T}
L.-P. Thibault,
Tilting objects in singularity categories and levelled mutations,
preprint, \texttt{arXiv:2004.02655}.

\bibitem{Uis} 
K. Ueyama,
Graded maximal Cohen-Macaulay modules over noncommutative graded Gorenstein isolated singularities, 
\textit{J. Algebra} \textbf{383} (2013), 85--103. 

\bibitem{Ucm}
K. Ueyama,
Noncommutative graded algebras of finite Cohen-Macaulay representation type,
\textit{Proc. Amer. Math. Soc.} \textbf{143} (2015), 3703--3715.

\bibitem{Uk}
K. Ueyama, 
On Kn\"orrer periodicity for quadric hypersurfaces in skew projective spaces,
\textit{Canad. Math. Bull.} \textbf{62} (2019), no. 4, 896--911. 

\bibitem{Y}
A. Yekutieli, The continuous Hochschild cochain complex of a scheme,
\textit{Canad. J. Math.} \textbf{54} (2002), no. 6, 1319--1337. 

\bibitem{Zh}
J. J. Zhang,
Twisted graded algebras and equivalences of graded categories,
{\it Proc. Lond. Math. Soc. (3)} \textbf{72} (1996), no. 2, 281--311. 
\end{thebibliography}
\end{document}